\numberwithin{equation}{subsection}
\newtheorem{thm}{Theorem}[subsubsection]
\newtheorem*{thm*}{Theorem}
\newtheorem{cor}[thm]{Corollary}
\newtheorem*{cor*}{Corollary}
\newtheorem{lem}[thm]{Lemma}
\newtheorem{prop}[thm]{Proposition}
\newtheorem{prop-const}[thm]{Proposition-Construction}
\newtheorem*{conjecture*}{Conjecture}
\newtheorem*{princ*}{Principle}
\newtheorem{introthm}{Theorem}
\theoremstyle{remark}
\newtheorem{rem}[thm]{Remark}
\newtheorem{example}[thm]{Example}
\newcommand{\into}{\hookrightarrow}\raggedbottom
\newcommand{\bB}{{\mathbb B}}
\newcommand{\bG}{{\mathbb G}}
\newcommand{\bQ}{{\mathbb Q}}
\newcommand{\bZ}{{\mathbb Z}}
\newcommand{\cC}{{\mathcal C}}
\newcommand{\cD}{{\mathcal D}}
\newcommand{\cF}{{\mathcal F}}
\newcommand{\cG}{{\mathcal G}}
\newcommand{\cN}{{\mathcal N}}
\newcommand{\cY}{{\mathcal Y}}
\newcommand{\fg}{{\mathfrak g}}
\newcommand{\fh}{{\mathfrak h}}
\newcommand{\ft}{{\mathfrak t}}
\newcommand{\fz}{{\mathfrak z}}
\newcommand{\on}{\operatorname}
\renewcommand{\dot}{\bullet}
\newcommand{\Bun}{\on{Bun}}
\renewcommand{\lim}{\on{lim}}
\renewcommand{\subset}{\subseteq}
\newcommand{\biggg}{\bBigg@{4}}
\newcommand{\Biggg}{\bBigg@{5}}
\date{\today}
\begin{document}

\frenchspacing

\setlength{\epigraphwidth}{0.4\textwidth}
\renewcommand{\epigraphsize}{\footnotesize}

\begin{abstract}

In this paper, we introduce the category of quasi-tempered automorphic D-modules, which is a rather natural class of D-modules from the point of view of geometric Langlands. We provide a characterization of this category in terms of singular support, and as a consequence, we obtain certain microlocal categorical Künneth formulas.

\end{abstract}

\title{Quasi-Tempered Automorphic D-modules}

\author{Joakim F\ae rgeman}

\address{The University of Texas at Austin, 
Department of Mathematics, 
PMA 11.156, 2515 Speedway Stop C1200, 
Austin, TX 78712}

\email{joakim.faergeman@utexas.edu}

\maketitle

\setcounter{tocdepth}{2}
\tableofcontents

\section{Introduction}

\subsection{Geometric Langlands}

\subsubsection{}

Let $k$ be an algebraically closed field of characteristic zero. We let $G$ be a connected reductive group over $k$ and $X$ a smooth connected projective curve over $k$. Denote by $\textrm{Bun}_G$ the moduli stack of principal $G$-bundles on $X$.

Let $\check{G}$ be the Langlands dual group of $G$, and let $\textrm{LocSys}_{\check{G}}$ denote the derived stack of de Rham $\check{G}$-local systems on $X$.

\subsubsection{} 

Let us begin by recalling the geometric Langlands conjecture as formulated in \cite[\S11]{arinkin2015singular}. Here, one expects an equivalence
\begin{equation}\label{GL}
\mathbb{L}_G: D(\textrm{Bun}_G)\simeq \textrm{IndCoh}_{\check{\mathcal{N}}}(\textrm{LocSys}_{\check{G}})
\end{equation}

\noindent where the right hand side of the equivalence denotes the DG category of ind-coherent sheaves on $\textrm{LocSys}_{\check{G}}$ with nilpotent singular support.\footnote{We refer to \cite{arinkin2015singular} for a discussion of singular support for coherent sheaves.} This category contains the full subcategory $\textrm{QCoh}(\textrm{LocSys}_{\check{G}})$ of quasi-coherent sheaves on $\textrm{LocSys}_{\check{G}}$.

\subsubsection{}

The subcategory $D(\textrm{Bun}_G)^{\textrm{temp}}\subset D(\textrm{Bun}_G)$ of \emph{tempered} D-modules on $\textrm{Bun}_G$, whose definition we recall in \S2.3 below, is supposed to match $\textrm{QCoh}(\textrm{LocSys}_{\check{G}})$ under $\mathbb{L}_G$.

That is, we should have a commutative diagram
\[\begin{tikzcd}
	{D(\textrm{Bun}_G)^{\textrm{temp}}} && {\textrm{QCoh}(\textrm{LocSys}_{\check{G}})} \\
	\\
	{D(\textrm{Bun}_G)} && {\textrm{IndCoh}_{\check{\mathcal{N}}}(\textrm{LocSys}_{\check{G}})}
	\arrow[from=1-1, to=1-3]
	\arrow[hook, from=1-3, to=3-3]
	\arrow[hook, from=1-1, to=3-1]
	\arrow[from=3-1, to=3-3]
\end{tikzcd}\]
where the horizontal functors are equivalences.
\medskip

\subsubsection{} This paper is concerned with an intermediate category $D(\textrm{Bun}_G)^{\textrm{quasi-temp}}$ consisting of \emph{quasi-tempered} D-modules on $\textrm{Bun}_G$ (see §\ref{s:qtemp} below for a definition). If $G$ is non-abelian, this category refines the above picture:
\[\begin{tikzcd}
	{D(\textrm{Bun}_G)^{\textrm{temp}}} && {\textrm{QCoh}(\textrm{LocSys}_{\check{G}})} \\
	& {} \\
	{D(\textrm{Bun}_G)^{\textrm{quasi-temp}}} && {\textrm{IndCoh}_{\check{\mathcal{N}}_{\textrm{Irreg}}}(\textrm{LocSys}_{\check{G}})} \\
	\\
	{D(\textrm{Bun}_G)} && {\textrm{IndCoh}_{\check{\mathcal{N}}}(\textrm{LocSys}_{\check{G}})}
	\arrow[from=1-1, to=1-3]
	\arrow[hook, from=1-3, to=3-3]
	\arrow[hook, from=1-1, to=3-1]
	\arrow[from=3-1, to=3-3]
	\arrow[hook, from=3-1, to=5-1]
	\arrow[from=5-1, to=5-3]
	\arrow[hook, from=3-3, to=5-3]
\end{tikzcd}\]
\\
where all horizontal functors are equivalences. Here, $\textrm{IndCoh}_{\check{\mathcal{N}}_{\textrm{Irreg}}}(\textrm{LocSys}_{\check{G}})$ denotes the category of ind-coherent sheaves on $\textrm{LocSys}_{\check{G}}$ with irregular nilpotent singular support.

\subsection{Main results}

The main goal of this paper is to provide a microlocal characterization of quasi-tempered automorphic D-modules.

\subsubsection{}

Recall that the cotangent bundle $T^*\textrm{Bun}_G$ is the algebraic stack that parametrizes Higgs bundles on $X$. That is (up to identifying $\mathfrak{g}^*\simeq \mathfrak{g}$ by a $G\times \mathbb{G}_m$-invariant isomorphism), $T^*\textrm{Bun}_G$ parametrizes pairs $(\mathcal{P},\phi)$ where $\mathcal{P}$ is a $G$-bundle on $X$ and $\phi\in\Gamma(X,\mathfrak{g}_{\mathcal{P}}\otimes \Omega^1_X)$. Decompose the Lie algebra $\mathfrak{g}=[\mathfrak{g},\mathfrak{g}]\oplus \mathfrak{z}(\mathfrak{g})$ into its derived sub Lie algebra and its center. We shall consider the full subcategory $$D_{\mathfrak{z}(\mathfrak{g})}(\textrm{Bun}_G)\subset D(\textrm{Bun}_G)$$
consisting of elements $\mathcal{F}\in D(\textrm{Bun}_G)$ such that if $(\mathcal{P},\phi)$ lies in the singular support of $\mathcal{F}$, then $\phi$ factors through $\mathfrak{z}(\mathfrak{g})\subset \mathfrak{g}$. Note that if $G$ is semisimple, then $\mathfrak{z}(\mathfrak{g})=0$, and the category $D_{\mathfrak{z}(\mathfrak{g})}(\textrm{Bun}_G)$ coincides with $D_0(\textrm{Bun}_G),$ the category of automorphic local systems.

\subsubsection{} The inclusion
$$D(\textrm{Bun}_G)^{\textrm{quasi-temp}}\longhookrightarrow D(\textrm{Bun}_G)$$

\noindent admits a continuous right adjoint given by \emph{quasi-temperization}: $$\textrm{quasi-temp}:D(\textrm{Bun}_G)\longrightarrow D(\textrm{Bun}_G)^{\textrm{quasi-temp}}$$ 

\subsubsection{} We let $D(\textrm{Bun}_G)^{\textrm{max-anti-temp}}\subset D(\textrm{Bun}_G)$ denote the kernel of quasi-temp. That is, we have an exact sequence of DG categories:
\[\begin{tikzcd}
	{D(\textrm{Bun}_G)^{\textrm{quasi-temp}}} && {D(\textrm{Bun}_G)} && {D(\textrm{Bun}_G)^{\textrm{max-anti-temp}}}.
	\arrow[shift left=1, hook, from=1-1, to=1-3]
	\arrow["{\textrm{max-anti-temp}}", shift left=1, from=1-3, to=1-5]
	\arrow["{\textrm{quasi-temp}}", shift left=1, from=1-3, to=1-1]
	\arrow[shift left=1, hook, from=1-5, to=1-3]
\end{tikzcd}\]
We refer to $D(\textrm{Bun}_G)^{\textrm{max-anti-temp}}$ as the category of \emph{maximally anti-tempered} automorphic D-modules.

\subsubsection{}
We may now state the main theorem in this paper.

\begin{introthm}\label{t:A}

The category $D(\on{Bun}_G)^{\on{max-anti-temp}}$ coincides with $D_{\mathfrak{z}(\mathfrak{g})}(\Bun_G)$.

\end{introthm}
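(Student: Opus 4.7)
I would prove Theorem~\ref{t:A} by establishing both inclusions via a microlocal analysis of generators of $D(\Bun_G)^{\on{quasi-temp}}$. The first step is to identify a convenient family of such generators with controlled singular support. Based on the definition in \S2.3, these should arise from Eisenstein-type functors $\on{Eis}_P$ attached to proper parabolics $P \subsetneq G$; the singular-support bound for Eisenstein series then implies that their images land in the closed substack of $T^*\Bun_G$ swept out by Higgs fields $(\cP, \phi)$ admitting a reduction to some proper parabolic. The key Lie-theoretic input is that $\phi \in \fg$ admits a reduction to a proper parabolic subalgebra iff $\phi \notin \fz(\fg)$: if $\phi$ is non-central, its centralizer is a proper reductive subgroup of $G$ and is contained in a proper parabolic. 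Globally, this identifies the Eisenstein locus inside the global nilpotent part of $T^*\Bun_G$ with the complement of $\fz(\fg)_{\Bun_G}$.

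For the inclusion $D_{\fz(\fg)}(\Bun_G) \subseteq D(\Bun_G)^{\on{max-anti-temp}}$, I would show that any $\cF$ with $\on{SS}(\cF) \subseteq \fz(\fg)_{\Bun_G}$ is right-orthogonal to every quasi-tempered generator $\cG$. Since $\on{SS}(\cG)$ lies in the Eisenstein locus, it is disjoint from $\fz(\fg)_{\Bun_G}$ away from the zero section. A microlocal $\Hom$-vanishing argument, ideally phrased via the $(\on{Eis}_P, \on{CT}_P)$ adjunction to reduce the question to the vanishing of the constant term of $\cF$ on $\Bun_L$, then forces $\Hom(\cG, \cF) = 0$; summing over generators shows that $\cF$ lies in the kernel of quasi-temperization.

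For the reverse inclusion I would argue contrapositively: if $\on{SS}(\cF) \not\subseteq \fz(\fg)_{\Bun_G}$, fix a point $(\cP_0, \phi_0) \in \on{SS}(\cF)$ with $\phi_0 \notin \fz(\fg)$. By the Lie-theoretic observation above, this point belongs to the Eisenstein locus for some proper $P$, and one then constructs a $\cG = \on{Eis}_P(\cH)$ for a suitable $\cH \in D(\Bun_L)$ whose singular support contains $(\cP_0, \phi_0)$. A microlocal non-vanishing (or direct pairing) argument yields $\Hom(\cG, \cF) \neq 0$, contradicting max-anti-temperedness of $\cF$.

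The main obstacle will be the microlocal step: bare $\on{SS}$-disjointness does not automatically yield $\Hom$-vanishing on an algebraic stack like $\Bun_G$, so the required estimates must be extracted from the specific nature of the generators and the singular-support control of the Eisenstein/constant-term adjunction. A secondary technical point is pinning down exactly which family of $\on{Eis}_P$-type objects actually generates $D(\Bun_G)^{\on{quasi-temp}}$ in the sense of \S2.3, and verifying that their joint singular support really does sweep out the full complement of $\fz(\fg)_{\Bun_G}$ in the global nilpotent cone.
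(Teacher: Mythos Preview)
Your proposal has genuine gaps at each of its load-bearing steps. First, nothing in \S2.3 says that $D(\Bun_G)^{\on{quasi-temp}}$ is generated by Eisenstein series from proper parabolics; quasi-temperedness is defined purely via the Hecke action and the spectral singular-support condition (irregular nilpotent locus under derived Satake), and an Eisenstein description of this category would itself be a substantial theorem, not a starting point. Second, the Lie-theoretic input is false as stated: every element of $\fg$ lies in a Borel subalgebra, hence in a proper parabolic whenever $G$ is non-abelian, so ``$\phi$ admits a reduction to a proper parabolic'' does not separate central from non-central elements. The singular support of $\on{Eis}_P$ is governed by a Lagrangian correspondence involving the nilradical $\fn_P$, not merely $\fp$, and identifying the union of these images with the complement of $(T^*\Bun_G)_{\fz(\fg)}$ is not what you claim. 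Third, even granting correct loci, singular-support disjointness away from the zero section does not yield $\Hom$-vanishing on a non-proper stack like $\Bun_G$; you flag this yourself, but it is not a technicality---it is the entire content of the theorem, and your outline offers no mechanism to bridge it.

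The paper's argument is completely different and does not touch Eisenstein series. The key structural input is Proposition~\ref{p:key}: $\on{Sph}_G^{\on{max-anti-temp}}$ is generated under filtered colimits by the single object $\omega_{\on{Sph}_G}$, proved via the Koszul-dual description of $\on{Sph}_G$ and the Kostant slice. This yields the inclusion $D(\Bun_G)^{\on{max-anti-temp}} \subset D_{\fz(\fg)}(\Bun_G)$ directly (Theorem~\ref{t:A1}): max-anti-tempered sheaves are $G(K)$-monodromic, so their pullbacks to $\Gr_{G,x}$ are colimits of the dualizing sheaf, forcing the singular-support bound. For the reverse inclusion the paper reduces via the isogeny $Z(G)^\circ \times [G,G] \to G$ to the semisimple simply-connected case, where the decisive fact is that $\Bun_G$ is topologically simply connected, so every object of $D_0(\Bun_G)^{\heartsuit,\on{constr}}$ is a sum of constant sheaves (Proposition~\ref{p:cst}); since the constant sheaf is visibly max-anti-tempered, a t-structure argument finishes the job.
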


\begin{rem}
One may alternatively define $D(\on{Bun}_G)^{\mathrm{max-anti-temp}}$ in terms of the maximally anti-tempered subcategory $\on{Sph}_G^{\mathrm{max-anti-temp}}$ of the spherical category $\on{Sph}_G$, see §\ref{s:qtemp}. What allows us to control $D(\on{Bun}_G)^{\mathrm{max-anti-temp}}$ is the fact that $\on{Sph}_G^{\mathrm{max-anti-temp}}$ is generated by the dualizing sheaf, see Proposition \ref{p:key}.
\end{rem}

\begin{rem}\label{r:D}

As noted in Example 2.6 below, if $G$ is semisimple of rank 1, then $D(\textrm{Bun}_G)^{\textrm{quasi-temp}}$ coincides with $D(\textrm{Bun}_G)^{\textrm{temp}}$. In this case, Theorem A says that the category $D(\textrm{Bun}_G)^{\textrm{anti-temp}}$ (i.e. the subcategory of $D(\textrm{Bun}_G)$ right orthogonal to $D(\textrm{Bun}_G)^{\textrm{temp}}$) coincides with $D_0(\textrm{Bun}_G)$, the category of automorphic local systems. This was shown by Beraldo in \cite{beraldo2021geometric}.\footnote{There is a slight issue in the statement of Theorem 4.3.8 in \emph{loc.cit}, which yields a marginally different claim. At the time of writing, the paper is being revised.}

\end{rem}

\subsubsection{A mysterious symmetry.}

In \cite{faergeman2022non}, it was shown that 
\[\textrm{Shv}_{Nilp}(\textrm{Bun}_G)^{\textrm{anti-temp}}=\textrm{Shv}_{Nilp_{\textrm{Irreg}}}(\textrm{Bun}_G)
\] 

\noindent as subcategories of $\textrm{Shv}_{Nilp}(\textrm{Bun}_G)$ (see \emph{loc.cit} for the notation).

That is, the anti-tempered automorphic sheaves with nilpotent singular support are exactly those with irregular nilpotent singular support. Under the (restricted) geometric Langlands conjecture considered in \cite{arinkin2020stack}, this is saying that the kernel of the composition
\begin{equation}\label{eq:rgl1}\textrm{Shv}_{Nilp}(\textrm{Bun}_G)\simeq \textrm{IndCoh}_{\check{\mathcal{N}}}(\textrm{LocSys}_{\check{G}}^{\textrm{restr}})\rightarrow \textrm{IndCoh}_0(\textrm{LocSys}_{\check{G}}^{\textrm{restr}})
\end{equation}

\noindent is $\textrm{Shv}_{Nilp_{\textrm{Irreg}}}(\textrm{Bun}_G)$. Here $\textrm{IndCoh}_0(\textrm{LocSys}_{\check{G}}^{\textrm{restr}})$ denotes the category of ind-coherent sheaves on $\textrm{LocSys}_{\check{G}}^{\textrm{restr}}$ with zero singular support and coincides with $\textrm{QCoh}(\textrm{LocSys}_{\check{G}}^{\textrm{restr}})$.

On the other hand, it follows from Theorem A above that
\[
\on{Shv}_{Nilp}(\on{Bun}_G)^{\on{max-anti-temp}}\simeq \on{Shv}_0(\textrm{Bun}_G).
\]

\noindent Under the restricted geometric Langlands conjecture, this is saying that the kernel of the composition 
\begin{equation}\label{eq:rgl2}
\textrm{Shv}_{Nilp}(\textrm{Bun}_G)\simeq \textrm{IndCoh}_{\check{\mathcal{N}}}(\textrm{LocSys}_{\check{G}}^{\textrm{restr}})\rightarrow \textrm{IndCoh}_{\check{\mathcal{N}}_{\textrm{Irreg}}}(\textrm{LocSys}_{\check{G}}^{\textrm{restr}})
\end{equation}

\noindent is given by $\textrm{Shv}_0(\textrm{Bun}_G)$.

Thus, we see a duality appearing between "zero singular support" and "irregular nilpotent singular support" on the automorphic and spectral side of geometric Langlands.
We emphasize that this duality is, to our knowledge, completely mysterious.

\begin{rem}\label{r:lys}
We should note that a preprint of Sergey Lysenko recently appeared establishing a conjectural duality as above for all special nilpotent orbits under the Lusztig-Spaltenstein bijection (see \cite[Conjecture 2.2.2]{lysenko}). The conjecture is partially motivated by the paper \cite[Conjecture 4.2, 4.3]{jiang2014automorphic}, establishing a similar conjecture for automorphic forms.

We remark that the current paper and \cite{faergeman2022non} settle the conjecture of Lysenko for the two extreme cases given by the zero and regular orbit, respectively.
\end{rem}

\subsubsection{Künneth Formulas.}

Theorem A provides a description of the category $D_{\mathfrak{z}(\mathfrak{g})}(\textrm{Bun}_G)$ in terms of the Hecke action on $D(\textrm{Bun}_G)$. As a consequence, this category enjoys various properties not immediate from its definition. Theorem B below provides an example of such a property, showing that $D_{\mathfrak{z}(\mathfrak{g})}(\textrm{Bun}_G)$ satisfies a categorical Künneth formula.

To elaborate, recall that if $\mathcal{X},\mathcal{Y}$ are prestacks locally almost of finite type, then one has a functor 
\[
D(\mathcal{X})\otimes D(\mathcal{Y})\xrightarrow{-\boxtimes -} D(\mathcal{X}\times \mathcal{Y}).
\]
If either $D(\mathcal{X})$ or $D(\mathcal{Y})$ is dualizable as a DG category, then this functor is an equivalence, cf. \cite[Prop. 2.2.8]{drinfeld2013compact}).\footnote{By \emph{loc.cit}, this happens e.g. for $\mathcal{X}=\textrm{Bun}_G$ or if $\mathcal{X}$ is a QCA stack.}

If now $\mathcal{X}, \mathcal{Y}$ are smooth algebraic stacks and $\mathcal{N}_{\mathcal{X}}\subset T^*\mathcal{X}, \mathcal{N}_{\mathcal{Y}}\subset T^*\mathcal{Y}$ are Zariski-closed conical subsets, one similarly has a functor
\begin{equation}\label{eq:sing}
D_{\mathcal{N}_{\mathcal{X}}}(\mathcal{X})\otimes D_{\mathcal{N}_{\mathcal{Y}}}(\mathcal{Y})\xrightarrow{-\boxtimes -} D_{\mathcal{N}_{\mathcal{X}}\times \mathcal{N}_{\mathcal{Y}}}(\mathcal{X}\times \mathcal{Y}).
\end{equation}

\noindent If e.g. $D_{\mathcal{N}_{\mathcal{X}}}(\mathcal{X})$ and $D(\mathcal{Y})$ are dualizable, then the functor (\ref{eq:sing}) is fully faithful. However, the functor need not be essentially surjective in this case. For example, this fails for $\mathcal{X}=\mathcal{Y}=\mathbb{A}^1$ and $\mathcal{N}=\lbrace \textrm{zero section}\rbrace$. Indeed, denote by $$m: \mathbb{A}^1\times \mathbb{A}^1\rightarrow \mathbb{A}^1$$ the multiplication map. Then the D-module $m^!(\textrm{exp})$ does not lie in the image of the functor $$D_0(\mathbb{A}^1)\otimes D_0(\mathbb{A}^1)\xrightarrow{-\boxtimes -} D_0(\mathbb{A}^1\times \mathbb{A}^1)$$
where $\textrm{exp}\in D_0(\mathbb{A}^1)$ denotes the exponential sheaf (\cite[\S 16.3.1]{arinkin2020stack}).

\subsubsection{}

An important class of examples where the functor (\ref{eq:sing}) is an equivalence (at least up to issues of left-completion)\footnote{These issues may often be overcome in the case of interest; for example when $X$ is an abelian variety (see Theorem 4.13 below).}, is when $\mathcal{X}=X$ is a smooth and proper scheme, $\mathcal{N}_X$ is the zero section, and $\mathcal{N}_{\mathcal{Y}}\subset T^*\mathcal{Y}$ is half-dimensional (cf. \cite[Thm A.3.9]{gaitsgory2022toy}).

Our next theorem provides another example where (\ref{eq:sing}) is an equivalence.

\begin{introthm}\label{t:B}
Let $G$ and $H$ be reductive groups. Then the functors 
\[
D_{\mathfrak{z}(\mathfrak{g})}(\on{Bun}_G)\otimes D_{\mathfrak{z}(\mathfrak{h})}(\Bun_H)\longrightarrow D_{\mathfrak{z}(\mathfrak{g})\times \mathfrak{z}(\mathfrak{h})}(\Bun_G\times \Bun_H)
\]
\[D_0(\Bun_G)\otimes D_0(\Bun_H)\longrightarrow D_0(\Bun_G\times \Bun_H)
\]
are equivalences.
\end{introthm}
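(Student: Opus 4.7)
The plan is to use Theorem~\ref{t:A} to translate both statements into compatibilities of max-anti-tempered categories under external products. Since $\mathfrak{z}(\mathfrak{g}\oplus\mathfrak{h})=\mathfrak{z}(\mathfrak{g})\oplus\mathfrak{z}(\mathfrak{h})$ and $T^*\Bun_{G\times H}\simeq T^*\Bun_G\times T^*\Bun_H$, Theorem~\ref{t:A} applied to $G$, $H$, and $G\times H$ identifies the three categories in the first statement with $D(\Bun_G)^{\on{max-anti-temp}}$, $D(\Bun_H)^{\on{max-anti-temp}}$, and $D(\Bun_{G\times H})^{\on{max-anti-temp}}$, respectively. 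Combined with the K\"unneth equivalence $D(\Bun_G)\otimes D(\Bun_H)\simeq D(\Bun_{G\times H})$ (valid by dualizability of $D(\Bun_G)$, cf.~\cite[Prop.~2.2.8]{drinfeld2013compact}), the first claim reduces to establishing
\[
D(\Bun_G)^{\on{max-anti-temp}}\otimes D(\Bun_H)^{\on{max-anti-temp}}\simeq D(\Bun_{G\times H})^{\on{max-anti-temp}}.
\]

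By construction, $D(\Bun_{\bullet})^{\on{max-anti-temp}}$ is the right orthogonal of $D(\Bun_{\bullet})^{\on{quasi-temp}}$. Hence, by a standard argument on tensor products of Bousfield localization sequences, the displayed equivalence follows once one shows that $D(\Bun_{G\times H})^{\on{quasi-temp}}$, viewed inside $D(\Bun_G)\otimes D(\Bun_H)$, is the smallest cocomplete full subcategory containing $D(\Bun_G)^{\on{quasi-temp}}\otimes D(\Bun_H)$ together with $D(\Bun_G)\otimes D(\Bun_H)^{\on{quasi-temp}}$. This matches the expectation that quasi-temperedness on the product detects quasi-temperedness in either factor, and it should follow from the Hecke-theoretic definition of quasi-temperization recalled in \S2 combined with the factorization $\on{Sph}(G\times H)\simeq \on{Sph}(G)\otimes \on{Sph}(H)$ of spherical Hecke categories, which makes the external $\boxtimes$-product of automorphic D-modules compatible with the product Hecke action.

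For the $D_0$ statement, I would reduce to the semisimple and toric cases using the central isogeny $G^{\on{der}}\times Z(G)^\circ\to G$ (and similarly for $H$), which splits $\Bun_G$ into simpler factors up to finite gerbes. In the semisimple case $\mathfrak{z}(\mathfrak{g})=0$, so $D_0(\Bun_G)=D_{\mathfrak{z}(\mathfrak{g})}(\Bun_G)$ and the first part of the theorem applies directly. In the toric case, $D_0(\Bun_T)$ is the category of automorphic local systems on $\Bun_T$, and the K\"unneth formula reduces to the classical K\"unneth for local systems on products of abelian varieties (the Jacobian factors) and of classifying stacks. The main obstacle I expect is the cogeneration claim in the second paragraph: carefully verifying that $D(\Bun_{G\times H})^{\on{quasi-temp}}$ is generated by the two indicated subcategories requires a detailed analysis of the definition of quasi-temperization and its compatibility with the external Hecke action; granted this, the remainder of the argument is formal.
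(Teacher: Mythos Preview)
Your approach to the first equivalence is essentially the paper's, but formulated dually. The paper proves directly that max-anti-tempered categories tensor correctly (Proposition~\ref{p:funct}(b)): since $\on{Sph}_G^{\on{max-anti-temp}}$ is generated under colimits by $\omega_{\on{Sph}_G}$ (Proposition~\ref{p:key}), and $\omega_{\on{Sph}_{G\times H}}\simeq\omega_{\on{Sph}_G}\boxtimes\omega_{\on{Sph}_H}$, one gets $(\cC\otimes\cD)^{\on{max-anti-temp}}\simeq\cC^{\on{max-anti-temp}}\otimes\cD^{\on{max-anti-temp}}$ immediately. Your proposed route through the quasi-tempered side---showing that $D(\Bun_{G\times H})^{\on{quasi-temp}}$ is cogenerated by the two obvious subcategories---amounts to proving the orthogonal statement; it works (an element $(x,y)\in\check{\fg}\times\check{\fh}$ is irregular iff $x$ or $y$ is, so $\on{Sph}_{G\times H}^{\on{quasi-temp}}$ is generated by $\on{Sph}_G^{\on{quasi-temp}}\otimes\on{Sph}_H$ and $\on{Sph}_G\otimes\on{Sph}_H^{\on{quasi-temp}}$), but the paper's direct argument via $\omega_{\on{Sph}}$ is shorter and avoids the Bousfield-localization bookkeeping you flag as the ``main obstacle.''

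For the $D_0$ statement your outline matches the paper's four-step reduction (semisimple, torus $\times$ semisimple, mixed, general) via the central isogeny $Z(G)^\circ\times[G,G]\to G$. There is, however, a genuine gap in your torus step: the K\"unneth formula $D_0(A)\otimes D_0(\cY)\simeq D_0(A\times\cY)$ for $A$ an abelian variety is \emph{not} classical in the sense you suggest. The category $D_0$ here contains unbounded complexes and ind-objects, and as \S1.2.6 of the paper illustrates, microlocal K\"unneth can fail even for $D_0(\bA^1)$. The paper imports this result as Theorem~\ref{t:ab} from \cite{arinkin2020stack} (specifically the ``duality-adapted'' machinery of \S E.4 and \S23.3 there), and this is a genuine external input you should acknowledge rather than treat as folklore. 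Beyond this, the reduction steps in the paper require some care you have not mentioned: one needs dualizability of $D_0(\Bun_G)$ (Lemma~\ref{l:dual}, itself nontrivial for non-semisimple $G$), and the central isogeny only hits a union of connected components of $\Bun_G$, so one must use Hecke functors $V^{\check\lambda}\star-$ to translate between components before the smooth descent argument applies.
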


\subsubsection{}

Note that Theorem B is not very surprising in the light of the aforementioned theorem of \cite{gaitsgory2022toy}. Indeed, if $G$ is semisimple (so that in particular $\mathfrak{z}(\mathfrak{g})=0$), one may write 
\[
\textrm{Bun}_G\simeq \textrm{Maps}(X-x,G)\backslash \textrm{Gr}_{G,x}
\]
where $\textrm{Gr}_{G,x}$ is the affine Grassmannian at $x\in X$, and $\textrm{Maps}(X-x,G)$ denotes the group ind-scheme of maps $\lbrace X-x\rightarrow G\rbrace $.

In this way, $\textrm{Bun}_G$ behaves like a smooth and proper scheme, and so Theorem B is expected. For a general reductive group $G$, one may consider the isogeny $Z(G)^{\circ}\times [G,G]\rightarrow G$ to reduce Theorem B to the case where $G$ is the product of a torus and a semisimple algebraic group. Since for a torus $T$, $\textrm{Bun}_T$ looks like the product of an abelian variety with a lattice (up to a negligible stacky factor), Theorem B is expected in this case as well.

In this paper, however, we will not pursue this line of reasoning. In fact, after proving Theorem A, Theorem B will be easy to deduce.

\subsection{Structure of the paper} In section 2, we review preliminaries on singular support and temperedness in geometric Langlands. In section 3, we study how quasi-temperedness and maximal anti-temperedness behave under functoriality. In section 4, we prove Theorem A and B.

\subsection{Acknowledgements}
I would like to thank Tom Gannon, Rok Gregoric, Sergey Lysenko and Kendric Schefers for helpful discussions related to the present text. Special thanks goes to my advisor, Sam Raskin, for an extensive read of a draft of this paper and for many suggestions that led to radical improvements.

Finally, Dario Beraldo's paper \cite{beraldo2021geometric} has served as a major inspiration for this project.

\section{Preliminaries}

In this section, we review preliminaries needed in the paper.

\subsection{Notation and conventions}

\subsubsection{}

Denote by $k[[t]]$ and $k((t))$ the rings of Taylor series and Laurent series respectively, and write $O:=\textrm{Spec}\: k[[t]]$, $K:=\textrm{Spec}\: k((t))$. For a $k$-valued point $x\in X$, we write $O_x$ and $K_x$ for the adic disc and punctured disc at $x$. Choosing a uniformizer, we have identifications $O_x\simeq O$, $K_x\simeq K$.

\subsection{Sheaves and singular support} Let us set up notation for singular support of D-modules. Our conventions follow those of \cite[Appendix \S E-F]{arinkin2020stack}.

\subsubsection{Stacks} In this paper, by a "smooth algebraic stack" we will mean a smooth stack that is locally QCA in the sense of \cite{drinfeld2013some}. This includes stacks such as $\textrm{Bun}_G$.

\subsubsection{D-modules} For a prestack $\mathcal{Y}$ locally almost of finite type, we denote by $D(\mathcal{Y})$ the DG category of D-modules on $\mathcal{Y}$, following \cite{gaitsgory2017study}. If $f: \mathcal{X}\rightarrow \mathcal{Y}$ is a map of laft prestacks, we have the $!$-pullback functor $f^!: D(\mathcal{Y})\rightarrow D(\mathcal{X})$. Whenever its left adjoint is defined, we denote it by $f_!$. 
Moreover:
\begin{itemize}
    \item If $f$ is ind-representable, we have the $*$-pushforward functor $$f_{*,\textrm{dR}}: D(\mathcal{X})\rightarrow D(\mathcal{Y}).$$
    Whenever its left adjoint is defined, we denote it by $f^{*,\textrm{dR}}$.
    
    \item If $f$ is a map of locally QCA stacks, we let $f_{*,\textrm{ren-dR}}: D(\mathcal{X})\rightarrow D(\mathcal{Y})$ denote the \emph{renormalized} pushforward functor defined in \cite[\S 9]{drinfeld2013some}.
\end{itemize}

\subsubsection{Singular support on schemes} Let $S$ be a smooth scheme of finite type. The category $D(S)$ of D-modules on $S$ carries a natural t-structure, and for each Zariski-closed conical $\mathcal{N}\subset T^*S$, we may consider the category $$D_{\mathcal{N}}(S)^{\heartsuit,c}$$ of coherent D-modules $\mathcal{F}\in D(S)^{\heartsuit, c}$ whose singular support is contained in $\mathcal{N}$.

Write $D_{\mathcal{N}}(S)^{\heartsuit}:=\textrm{Ind}(D_{\mathcal{N}}(S)^{\heartsuit,c})$ for its Ind-completion. We denote by $D_{\mathcal{N}}(S)$ the subcategory of $D(S)$ consisting of D-modules $\mathcal{F}$ such that each cohomology sheaf $H^n(\mathcal{F})$ is an element of $D_{\mathcal{N}}(S)^{\heartsuit}$. By construction, $D_{\mathcal{N}}(S)$ carries a t-structure making the inclusion $D_{\mathcal{N}}(S)\hookrightarrow D(S)$ t-exact. Moreover, $D_{\mathcal{N}}(S)$ is left-complete in this t-structure.

\subsubsection{Singular support on stacks.}

Let now $\mathcal{Y}$ be an arbitrary smooth algebraic stack, and let $\mathcal{N}\subset T^*\mathcal{Y}$ be a Zariski-closed conical subset. For each finite type scheme $S$ mapping smoothly to $\mathcal{Y}$, denote by $\mathcal{N}_S$ the image of the codifferential $$T^*\mathcal{Y}\underset{\mathcal{Y}}{\times} S\rightarrow T^*S.$$

\noindent We define $D_{\mathcal{N}}(\mathcal{Y})$ to be $$D_{\mathcal{N}}(\mathcal{Y}):=\underset{S}{\textrm{lim}}\: D_{\mathcal{N}_S}(S)$$
where the limit is taken over finite type schemes mapping smoothly to $\mathcal{Y}$. That is, $D_{\mathcal{N}}(\mathcal{Y})$ is the full subcategory of $D(\mathcal{Y})$ consisting of sheaves that pull back to $D_{\mathcal{N}_S}(S)$ for each finite type scheme $S$ mapping smoothly to $\mathcal{Y}$. We see that $D_{\mathcal{N}}(\mathcal{Y})$ carries a natural structure, which is also left-complete.

In this paper, we will often consider the case where $\mathcal{N}=\lbrace \textrm{zero section}\rbrace$. We denote the corresponding category by $D_0(\mathcal{Y})$. If $\mathcal{Y}=S$ is a scheme, $D_0(S)$ consists of those $\mathcal{F}\in D(S)$ such that for every $n\in \mathbb{Z}$, each coherent subsheaf $\mathcal{G}\subset H^n(\mathcal{F})$ is a vector bundle with a flat connection.

\subsubsection{}

By construction, we have:
\begin{lem}\label{l:!}
Let $f:\mathcal{X}\rightarrow \mathcal{Y}$ be a smooth map of smooth algebraic stacks. For a Zariski-closed conical subset $\mathcal{N}_{\mathcal{Y}}\subset T^*\mathcal{Y}$, denote by $\mathcal{N}_{\mathcal{X}}$ the image of the codifferential
\[
df^{\dot}: T^*{\mathcal{Y}}\underset{\mathcal{Y}}{\times}\mathcal{X}\rightarrow T^*{\mathcal{X}}.
\]
Then the functor $f^!:D(\mathcal{Y})\rightarrow D(\mathcal{X})$ restricts to a functor 
\[
f^!:D_{\mathcal{N}_{\mathcal{Y}}}(\mathcal{Y})\rightarrow D_{\mathcal{N}_{\mathcal{X}}}(\mathcal{X}).
\]
Moreover, if $f$ is surjective and $\mathcal{F}\in D(\mathcal{Y})$ satisfies $f^!(\mathcal{F})\in D_{\mathcal{N}_{\mathcal{X}}}(\mathcal{X})$, then $\mathcal{F}\in D_{\mathcal{N}_{\mathcal{Y}}}(\mathcal{Y})$.
\end{lem}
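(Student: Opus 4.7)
The plan is to reduce both assertions to the analogous classical facts for smooth maps between smooth finite-type schemes (namely, that singular support pulls back under smooth maps via the codifferential, and that smooth surjective $!$-pullback reflects the singular-support condition), by exploiting that $D_{\mathcal{N}}(\mathcal{Y})$ is by construction the limit of $D_{\mathcal{N}_S}(S)$ over the smooth atlas of $\mathcal{Y}$.

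For the forward direction, given $\mathcal{F} \in D_{\mathcal{N}_{\mathcal{Y}}}(\mathcal{Y})$, I would check that for each smooth map $g\colon U \to \mathcal{X}$ from a finite-type scheme $U$, the pullback $(f\circ g)^!\mathcal{F} = g^!f^!\mathcal{F}$ has singular support contained in the expected closed conical subset of $T^*U$. Since $f \circ g$ is a smooth map from a scheme to $\mathcal{Y}$, the scheme-level fact applied to the hypothesis on $\mathcal{F}$ controls the singular support of $(f \circ g)^!\mathcal{F}$ as the codifferential image of $\mathcal{N}_{\mathcal{Y}} \times_{\mathcal{Y}} U$ under $d(f\circ g)^{\bullet}$. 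The chain rule $d(f \circ g)^{\bullet} = dg^{\bullet} \circ df^{\bullet}_U$ (where $df^{\bullet}_U$ is the $U$-family version of the codifferential of $f$), together with the definition of $\mathcal{N}_{\mathcal{X}}$, identifies this subset with the codifferential image of $\mathcal{N}_{\mathcal{X}} \times_{\mathcal{X}} U$ along $dg^{\bullet}$, which is exactly what is needed.

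For the converse---which requires (and presumably assumes implicitly) that $f$ be surjective, without which the statement already fails for schemes---I would argue as follows. Given $\mathcal{F} \in D(\mathcal{Y})$ with $f^!\mathcal{F} \in D_{\mathcal{N}_{\mathcal{X}}}(\mathcal{X})$, and an arbitrary smooth map $h\colon V \to \mathcal{Y}$ from a finite-type scheme, pick a smooth atlas $p\colon U \to \mathcal{X}$ by a scheme, and then a smooth atlas $W' \to V \times_{\mathcal{Y}} U$ by a scheme. The map $W' \to V$ is smooth and surjective, being a base change of the smooth surjection $f \circ p$ followed by an atlas, and the map $W' \to U \to \mathcal{X}$ is smooth. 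The forward direction applied to $W' \to \mathcal{X}$, combined with the hypothesis on $f^!\mathcal{F}$, shows that the pullback of $h^!\mathcal{F}$ to $W'$ has singular support contained in the codifferential image of $\mathcal{N}_{\mathcal{Y}}$ along $W' \to \mathcal{Y}$. The scheme-level converse (smooth surjective pullback reflects the singular-support condition) then yields $h^!\mathcal{F} \in D_{\mathcal{N}_{\mathcal{Y}, V}}(V)$, as required.

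The only genuine obstacle is bookkeeping: verifying that the codifferential images of $\mathcal{N}_{\mathcal{Y}}$ in $T^*W'$ computed via the two routes $W' \to V \to \mathcal{Y}$ and $W' \to U \to \mathcal{X} \to \mathcal{Y}$ agree. This is the chain rule for codifferentials applied in a family, and poses no real difficulty.
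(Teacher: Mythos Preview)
Your proposal is correct and follows essentially the same route as the paper: both reduce to the classical scheme-level statement via the definition of $D_{\mathcal{N}}(\mathcal{Y})$ as a limit over smooth charts, and both invoke the standard behavior of singular support under smooth pullback (the paper phrases this via t-exactness of $f^![-d]$ and the known formula for $\mathrm{SS}(f^!\mathcal{F})$ on coherent objects, while you spell out the chart-chasing more explicitly).

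Your observation about surjectivity is a genuine improvement over the paper's presentation. The paper's statement of the converse (``if $f^!(\mathcal{F})\in D_{\mathcal{N}_{\mathcal{X}}}(\mathcal{X})$ then $\mathcal{F}\in D_{\mathcal{N}_{\mathcal{Y}}}(\mathcal{Y})$'') does not explicitly assume $f$ surjective, and the paper's proof simply says ``we may assume $\mathcal{X},\mathcal{Y}$ are smooth schemes'' and appeals to the classical fact---but of course that classical fact already requires surjectivity. In every application in the paper the relevant map is indeed surjective (\'etale covers, isogeny-induced maps that surject onto unions of components, etc.), so nothing is actually wrong downstream, but you are right that the hypothesis is tacitly needed.
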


\begin{proof}
First, recall that $f$ is t-exact up to a cohomological shift by the relative dimension of $f$ (viewed as a locally constant function on $\mathcal{Y}$). If $\mathcal{X},\mathcal{Y}$ are finite type smooth schemes, the lemma follows from the fact that if $\cF\in D(\mathcal{Y})^{\heartsuit, c}$, then the singular support of $f^!(\cF)$ is the image of the singular support of $\cF$ under the codifferential $df^{\bullet}$, and the latter is an embedding.

The general case now reduces to that of finite type smooth schemes, by definition.

\end{proof}

\begin{lem}\label{l:*} 
Let $f: \mathcal{X}\rightarrow \mathcal{Y}$ be a schematic proper map of smooth algebraic stacks. Let $q$ denote the projection
\[
T^*\mathcal{Y}\underset{\mathcal{Y}}{\times}\mathcal{X}\rightarrow T^*\mathcal{Y}.
\]
For a Zariski-closed conical subset $\mathcal{N}_{\mathcal{X}}\subset T^*\mathcal{X}$, write $\mathcal{N}_{\mathcal{Y}}:=q\big((df^{\dot})^{-1}(\mathcal{N}_{\mathcal{X}})\big)$. Then $f_{*,\textrm{dR}}: D(\mathcal{X})\rightarrow D(\mathcal{Y})$ restricts to a functor
\[
f_{*,\textrm{dR}}: D_{\mathcal{N}_{\mathcal{X}}}(\mathcal{X})\rightarrow D_{\mathcal{N}_{\mathcal{Y}}}(\mathcal{Y}).
\]
\end{lem}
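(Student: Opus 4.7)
The plan is to reduce the statement to the case where $\mathcal{X}$ and $\mathcal{Y}$ are smooth schemes and then apply Kashiwara's classical estimate for singular support under proper pushforward of D-modules. The reduction proceeds by smooth descent: by the definition of $D_{\mathcal{N}_{\mathcal{Y}}}(\mathcal{Y})$ as a limit over finite-type schemes mapping smoothly to $\mathcal{Y}$, it suffices to check that $u^! f_{*,\textrm{dR}}(\mathcal{F})$ lies in $D_{(\mathcal{N}_{\mathcal{Y}})_U}(U)$ for every smooth atlas $u: U \to \mathcal{Y}$.

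First I would form the fiber product $V := U \underset{\mathcal{Y}}{\times} \mathcal{X}$; since $f$ is schematic $V$ is a scheme, and it is smooth since $u$ is smooth and $\mathcal{X}$ is smooth. Write $g: V \to U$ for the base-changed map, which is proper, and $u_V: V \to \mathcal{X}$ for the other projection. Proper base change identifies $u^! f_{*,\textrm{dR}}(\mathcal{F}) \simeq g_{*,\textrm{dR}}(u_V^! \mathcal{F})$, and by Lemma~\ref{l:!} the pullback $u_V^!(\mathcal{F})$ has singular support contained in $(\mathcal{N}_{\mathcal{X}})_V$. A diagram chase in the cotangent bundles of the square $(V,U,\mathcal{X},\mathcal{Y})$ shows that the image under the projection $T^*U \underset{U}{\times} V \to T^*U$ of the preimage of $(\mathcal{N}_{\mathcal{X}})_V$ under the codifferential $dg^{\dot}$ coincides with $(\mathcal{N}_{\mathcal{Y}})_U$; in other words, $\mathcal{N}_\mathcal{Y}$ and its scheme-level analogue for $g$ are compatible with smooth pullback along $u$.

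For the scheme case, I would handle coherent D-modules in the heart first: Kashiwara's theorem asserts that if $\mathcal{G} \in D(V)^{\heart,c}$ with $\textrm{SS}(\mathcal{G}) \subset \mathcal{N}_V$, then each cohomology sheaf $H^n g_{*,\textrm{dR}}(\mathcal{G})$ is coherent with singular support inside $q_g\big((dg^{\dot})^{-1}(\mathcal{N}_V)\big)$. Passage to the Ind-completion is automatic because $g_{*,\textrm{dR}}$ is continuous (as $g$ is proper) and hence commutes with the filtered colimits defining $\textrm{Ind}$. The extension from bounded to unbounded objects uses that $D_{\mathcal{N}_U}(U)$ is defined as the left-complete subcategory whose cohomology sheaves lie in $D_{\mathcal{N}_U}(U)^{\heart}$; since the singular support bound holds on each cohomology sheaf separately, the unbounded case follows.

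The main obstacle is the cotangent-bundle bookkeeping in the reduction step: one must carefully verify that the two ways of producing a Zariski-closed conical subset of $T^*U$ from $\mathcal{N}_\mathcal{X}$ — either restrict to $V$, apply $(dg^{\dot})^{-1}$, and project via $q_g$, or compute $\mathcal{N}_\mathcal{Y}$ first via $(df^{\dot})^{-1}$ and $q$, and then restrict to $U$ — give the same subset. Once this diagrammatic identity is in hand, the rest of the proof is essentially a formal consequence of the scheme-level theorem together with the definitions.
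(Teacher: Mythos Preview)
Your proposal is correct and follows essentially the same route as the paper: reduce to the case of smooth schemes by smooth base change and then invoke Kashiwara's classical estimate for singular support under proper pushforward. The paper's proof is simply a terser version of yours, compressing your reduction into the phrase ``by base-change, we may assume that $\mathcal{X}$ and $\mathcal{Y}$ are smooth schemes'' and omitting the cotangent-bundle bookkeeping you flag.
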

\begin{proof}
By base-change, we may assume that $\mathcal{X}$ and $\mathcal{Y}$ are smooth schemes. In this case, the assertion follows from the classical fact that for each $n\in\mathbb{Z}$, the functor $$H^n(f_{*,\textrm{dR}}(-)): D_{\mathcal{N}_{\mathcal{X}}}(\mathcal{X})^{\heartsuit,c}\rightarrow D(\mathcal{Y})^{\heartsuit}$$
lands in $D_{\mathcal{N}_{\mathcal{Y}}}(\mathcal{Y})^{\heartsuit,c}$ whenever $f$ is proper.
\end{proof}

\subsubsection{Singular support for automorphic sheaves.} We will be primarily concerned with singular support of sheaves on $\textrm{Bun}_G$. As such, recall that the cotangent bundle, $T^*\textrm{Bun}_G$, parametrizes pairs $(\mathcal{P},\phi)$ where $\mathcal{P}$ is a $G$-bundle on $X$ and $\phi\in \Gamma(X,\mathfrak{g}_{\mathcal{P}}\otimes\Omega_X^1)$. Whenever $\Lambda$ is an Ad-invariant Zariski-closed conical subset of $\mathfrak{g}$, we may consider the subset $(T^*\textrm{Bun}_G)_{\Lambda}\subset T^*\textrm{Bun}_G$ consisting of pairs $(\mathcal{P},\phi)\in T^*\textrm{Bun}_G$ such that $\phi$ factors through $\Lambda$. Note that this is a Zariski-closed conical subset of $T^*\textrm{Bun}_G$. 

We denote by $$D_{\Lambda}(\textrm{Bun}_G)\subset D(\textrm{Bun}_G)$$
the corresponding subcategory of sheaves $\mathcal{F}\in D(\textrm{Bun}_G)$
whose singular support is contained in $(T^*\textrm{Bun}_G)_{\Lambda}$. 

Recall that given $x\in X$, we have an action of the \emph{spherical category} (see $\S 2.3.1$ below) on $D(\textrm{Bun}_G)$ given by Hecke modifications at $x$. The argument in \cite{nadler2019spectral} shows that for any $\Lambda$, this action preserves $D_{\Lambda}(\textrm{Bun}_G)$.\footnote{In \emph{loc.cit}, the assertion is only explicitly stated for $\Lambda$ being the nilpotent cone of $\mathfrak{g}$. However, the same argument applies verbatim for any Zariski-closed Ad-invariant conical $\Lambda\subset \mathfrak{g}$.}

\begin{rem}
In this paper, we often consider the cases when $\Lambda=\mathfrak{z}(\mathfrak{g})$ or $\Lambda=0$. As we will see, the categories $D_{\mathfrak{z}(\mathfrak{g})}(\textrm{Bun}_G)$ and $D_0(\textrm{Bun}_G)$ admit characterizations purely in terms of the Hecke action on $D(\textrm{Bun}_G)$.\footnote{For $D_{\mathfrak{z}(\mathfrak{g})}(\textrm{Bun}_G)$, this will follow from Theorem A. For $D_0(\textrm{Bun}_G)$, this follows from Theorem A and \cite[]{nadler2019spectral} Namely, a sheaf $\mathcal{F}\in D(\textrm{Bun}_G)$ lies in $D_0(\textrm{Bun}_G)$ if and only if it lies in $D_{\mathfrak{z}(\mathfrak{g})}(\textrm{Bun}_G)$ and is locally constant along the curve for the Hecke action (see \emph{loc.cit}).} Recently, other examples of this phenomenon have occurred: see \cite{nadler2019spectral}, \cite{arinkin2020stack} and \cite{faergeman2022non} for the cases of $\Lambda$ being the locus of nilpotent elements and irregular elements, respectively.
\end{rem} 

\subsection{Temperedness} Before introducing the notion of quasi-temperedness, let us recall the usual notion of temperedness in geometric Langlands.
\subsubsection{} Consider the monoidal category of bi-$G(O)$-equivariant D-modules on $G(K)$: $$\textrm{Sph}_G:=D\big(G(O)\backslash G(K)/G(O)\big).$$
We write $\Omega \mathfrak{\check{g}}:=\textrm{pt}\underset{\mathfrak{\check{g}}}{\times}\textrm{pt}.$ As described in \cite{bezrukavnikov2007equivariant}, \cite[\S 12]{arinkin2015singular}, we have an equivalence of monoidal categories 
\begin{equation}\label{eq:sat}
\textrm{Sat}_G: \textrm{Sph}_G\simeq \textrm{IndCoh}_{\textrm{Nilp}(\mathfrak{\check{g}})/\check{G}}(\Omega \mathfrak{\check{g}}/\check{G}).
\end{equation}

\noindent Here, $\textrm{Nilp}(\mathfrak{\check{g}}^*)$ denotes the nilpotent cone of $\mathfrak{\check{g}}^*$, and the monoidal structure on the right hand side is given by convolution. $\textrm{IndCoh}_{\textrm{Nilp}(\mathfrak{\check{g}})/\check{G}}(\Omega \mathfrak{\check{g}}/\check{G})$ contains $\textrm{QCoh}(\Omega \mathfrak{\check{g}}/\check{G})$, the category of quasi-coherent sheaves on $\Omega\mathfrak{\check{g}}/\check{G}$, as a full subcategory. We define the category $\textrm{Sph}_G^{\textrm{temp}}$ to be the inverse image of $\textrm{QCoh}(\Omega \mathfrak{\check{g}}/\check{G})$ under the equivalence (\ref{eq:sat}).

There is an adjunction pair\[\begin{tikzcd}
	{\textrm{QCoh}(\Omega\mathfrak{\check{g}}/\check{G})} && {\textrm{IndCoh}_{\textrm{Nilp}(\mathfrak{\check{g}^*})/\check{G}}(\Omega \mathfrak{\check{g}}/\check{G})}
	\arrow[shift left=1, hook, from=1-1, to=1-3]
	\arrow[shift left=1, from=1-3, to=1-1]
\end{tikzcd}\]

\noindent from which we see that the inclusion $$\textrm{Sph}_G^{\textrm{temp}}\longhookrightarrow \textrm{Sph}_G$$ admits a continuous right adjoint $$\textrm{temp}: \textrm{Sph}_G\longrightarrow \textrm{Sph}_G^{\textrm{temp}}.$$

We define $\textrm{Sph}_G^{\textrm{anti-temp}}$ to be the kernel of $\textrm{temp}$. Thus, we obtain an exact sequence of DG categories 
\[\begin{tikzcd}
	{\textrm{Sph}_G^{\textrm{temp}}} && {\textrm{Sph}_G} && {\textrm{Sph}_G^{\textrm{anti-temp}}}
	\arrow[shift left=1, hook, from=1-1, to=1-3]
	\arrow["{\textrm{temp}}", shift left=1, from=1-3, to=1-1]
	\arrow[shift left=1, hook, from=1-5, to=1-3]
	\arrow["{\textrm{anti-temp}}", shift left=1, from=1-3, to=1-5]
\end{tikzcd}\]
where the map $\textrm{anti-temp}: \textrm{Sph}_G\rightarrow \textrm{Sph}_G^{\textrm{anti-temp}}$ is given by $\mathcal{G}\mapsto \textrm{cone}(\textrm{temp}(\mathcal{G})\rightarrow \mathcal{G})$.

\subsubsection{} For any DG category $\mathcal{C}$ equipped with an action of $\textrm{Sph}_G$, we define $$C^{\textrm{temp}}:=\textrm{Sph}_G^{\textrm{temp}}\underset{\textrm{Sph}_G}{\otimes}\mathcal{C},\:\: \mathcal{C}^{\textrm{anti-temp}}:=\textrm{Sph}_G^{\textrm{anti-temp}}\underset{\textrm{Sph}_G}{\otimes}\mathcal{C}.$$
As before, we have an exact sequence of DG categories
\begin{equation}\label{eq:temp}
\begin{tikzcd}
	{\mathcal{C}^{\textrm{temp}}} && {\mathcal{C}} && {\mathcal{C}^{\textrm{anti-temp}}}.
	\arrow[shift left=1, hook, from=1-1, to=1-3]
	\arrow["{\textrm{temp}}", shift left=1, from=1-3, to=1-1]
	\arrow[shift left=1, hook, from=1-5, to=1-3]
	\arrow["{\textrm{anti-temp}}", shift left=1, from=1-3, to=1-5]
\end{tikzcd}
\end{equation}

\subsubsection{Quasi-temperedness.}\label{s:qtemp} Let us now consider a slight variant of the above construction. Namely, consider the category $$\textrm{Sph}_G^{\textrm{quasi-temp}}:= \textrm{Sat}_G^{-1}\big(\textrm{IndCoh}_{\textrm{Nilp}_{\textrm{Irreg}}(\mathfrak{\check{g}})/\check{G}}(\Omega\mathfrak{\check{g}}/\check{G})\big)$$
where $\textrm{Nilp}_{\textrm{Irreg}}(\check{\mathfrak{g}}^*)$ denotes the locus of irregular nilpotent elements of $\check{\mathfrak{g}}^*$. We will refer to this category as the \emph{quasi-tempered spherical category}. We define $\textrm{Sph}_G^{\textrm{max-anti-temp}}$ by the exact sequence
\[\begin{tikzcd}
	{\textrm{Sph}_G^{\textrm{quasi-temp}}} && {\textrm{Sph}_G} && {\textrm{Sph}_G^{\textrm{max-anti-temp}}}.
	\arrow[shift left=1, hook, from=1-1, to=1-3]
	\arrow["{\textrm{quasi-temp}}", shift left=1, from=1-3, to=1-1]
	\arrow[shift left=1, hook, from=1-5, to=1-3]
	\arrow["{\textrm{max-anti-temp}}", shift left=1, from=1-3, to=1-5]
\end{tikzcd}\]
Similarly, for any DG category $\mathcal{C}$ equipped with an action of $\textrm{Sph}_G$, we define
\begin{equation}\label{eq:qtemp}
C^{\textrm{quasi-temp}}:=\textrm{Sph}_G^{\textrm{quasi-temp}}\underset{\textrm{Sph}_G}{\otimes}\mathcal{C},\:\: \mathcal{C}^{\textrm{max-anti-temp}}:=\textrm{Sph}_G^{\textrm{max-anti-temp}}\underset{\textrm{Sph}_G}{\otimes}\mathcal{C}.
\end{equation}
In particular, we get a "quasi-tempered" version of \ref{eq:temp}:
\[\begin{tikzcd}
	{\mathcal{C}^{\textrm{quasi-temp}}} && {\mathcal{C}} && {\mathcal{C}^{\textrm{max-anti-temp}}}.
	\arrow[shift left=1, hook, from=1-1, to=1-3]
	\arrow["{\textrm{quasi-temp}}", shift left=1, from=1-3, to=1-1]
	\arrow[shift left=1, hook, from=1-5, to=1-3]
	\arrow["{\textrm{max-anti-temp}}", shift left=1, from=1-3, to=1-5]
\end{tikzcd}\]
We refer to elements of $\mathcal{C}^{\textrm{max-anti-temp}}$ as \emph{maximally anti-tempered}.

\subsubsection{} Denote by $\mathbbm{1}_{\textrm{Sph}_G}$ the monoidal unit of $\textrm{Sph}_G$. The tempered unit and quasi-tempered unit are given by $\mathbbm{1}_{\textrm{Sph}_G}^{\textrm{temp}}:=\textrm{temp}(\mathbbm{1}_{\textrm{Sph}_G})$ and $\mathbbm{1}_{\textrm{Sph}_G}^{\textrm{quasi-temp}}:=\textrm{quasi-temp}(\mathbbm{1}_{\textrm{Sph}_G})$, respectively. We do not know of an explicit description of $\mathbbm{1}_{\textrm{Sph}_G}^{\textrm{quasi-temp}}$ outside the case of Example \ref{e:ssrank1} below.

\subsubsection{} Observe that for a DG category $\mathcal{C}$ equipped with an action of $(\textrm{Sph}_G,\star)$, we have identifications of functors 
$$\mathbbm{1}_{\textrm{Sph}_G}^{\textrm{temp}}\star -\simeq \textrm{temp}:\mathcal{C}\rightarrow \mathcal{C}$$
$$\mathbbm{1}_{\textrm{Sph}_G}^{\textrm{quasi-temp}}\star -\simeq \textrm{quasi-temp}:\mathcal{C}\rightarrow \mathcal{C}.$$
Indeed, it suffices to show this for $\cC\simeq \on{Sph}_G$ where it follows from the fact that $\on{temp}$ (resp. $\on{quasi-temp}$) is $\on{Sph}_G$-linear.

In particular, we have $$\mathcal{C}^{\textrm{anti-temp}}=\lbrace c\in \mathcal{C}\vert \mathbbm{1}_{\textrm{Sph}_G}^{\textrm{temp}}\star c=0\rbrace$$
$$\mathcal{C}^{\textrm{max-anti-temp}}=\lbrace c\in \mathcal{C}\vert \mathbbm{1}_{\textrm{Sph}_G}^{\textrm{quasi-temp}}\star c=0\rbrace.$$

\begin{example}

If $G=T$ is a torus, one has $\textrm{Nilp}(\mathfrak{\check{t}})=0$. Thus $\textrm{Sph}_T^{\textrm{temp}}=\textrm{Sph}_T, \: \textrm{Sph}_T^{\textrm{anti-temp}}=0$. On the other hand, $\textrm{Sph}_T^{\textrm{quasi-temp}}=0, \: \textrm{Sph}_T^{\textrm{max-anti-temp}}=\textrm{Sph}_T$. Note that Theorem A is trivial in this case.

\end{example}

\begin{example}\label{e:ssrank1}

If $G$ is of semisimple rank 1, then $0\in\textrm{Nilp}(\mathfrak{\check{g}})$ is the only irregular element. Thus, $\textrm{Sph}_G^{\textrm{quasi-temp}}=\textrm{Sph}_G^{\textrm{temp}}$ and $\textrm{Sph}_G^{\textrm{max-anti-temp}}=\textrm{Sph}_G^{\textrm{anti-temp}}$. In this case, an explicit description of $\mathbbm{1}_{\textrm{Sph}_G}^{\textrm{quasi-temp}}=\mathbbm{1}_{\textrm{Sph}_G}^{\textrm{temp}}$ is provided in \cite[Thm. C]{beraldo2021tempered}.

\end{example}

\subsubsection{} 

For any $x\in X$, we may identify $$D(G(O_x)\backslash G(K_x)/G(O_x))\simeq \textrm{Sph}_G,$$ and thereby obtain an action of $\textrm{Sph}_G$ on $D(\textrm{Bun}_G)$ given by Hecke modifications at $x$. By \cite{faergeman2021arinkin}, the corresponding categories $D(\textrm{Bun}_G)^{\textrm{temp}}, D(\textrm{Bun}_G)^{\textrm{anti-temp}}$, $D(\textrm{Bun}_G)^{\textrm{quasi-temp}}, D(\textrm{Bun}_G)^{\textrm{max-anti-temp}}$ are all independent of the choice of $x$.\footnote{To be precise, the fact that the categories $D(\textrm{Bun}_G)^{\textrm{temp}}, D(\textrm{Bun}_G)^{\textrm{anti-temp}}$ are independent of $x$ is explicitly stated in Theorem 1.1.3.1 in \emph{loc.cit}. For the categories $D(\textrm{Bun}_G)^{\textrm{quasi-temp}}, D(\textrm{Bun}_G)^{\textrm{max-anti-temp}}$, the assertion follows from the combination of Theorem 1.1.3.1 in \emph{loc.cit} and Remark 3.5 below which says that $D(\textrm{Bun}_G)^{\textrm{quasi-temp}}$ is the kernel of the functor $\omega_{\textrm{Sph}_G}\star -: D(\textrm{Bun}_G)\rightarrow D(\textrm{Bun}_G)$. Indeed, in the language of \emph{loc.cit}, it is easy to see that $\omega_{\textrm{Sph}_{G,X}}$ is quasi-ULA over $X$. Alternatively, the independence of $x$ follows a posteriori from Theorem A.} Henceforth, we fix $x\in X$ and consider the Hecke action of $\textrm{Sph}_G$ on $D(\textrm{Bun}_G)$ at $x$.
\subsubsection{} As alluded to in the introduction, let us verify that quasi-tempered automorphic sheaves correspond to ind-coherent sheaves with irregular nilpotent singular support on the spectral side of the geometric Langlands equivalence:

\begin{lem}\label{l:gl}

$D(\on{Bun}_G)^{\on{quasi-temp}}$ corresponds to the category $\on{IndCoh}_{\check{\mathcal{N}}_{\on{Irreg}}}(\on{LocSys}_{\check{G}})$ under the (conjectural) geometric Langlands equivalence.

\end{lem}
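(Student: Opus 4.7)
The plan is to transport the definition of quasi-temperedness through the geometric Langlands equivalence $\mathbb{L}_G$ and then identify the resulting spectral category as $\IndCoh_{\check{\mathcal{N}}_{\on{Irreg}}}(\LocSys_{\check{G}})$. By construction, one has
\[
D(\Bun_G)^{\on{quasi-temp}} = \Sph_G^{\on{quasi-temp}} \underset{\Sph_G}{\otimes} D(\Bun_G),
\]
and, since $\mathbb{L}_G$ intertwines the Hecke action of $\Sph_G$ at $x$ with the spectral action of $\IndCoh_{\on{Nilp}(\check{\mathfrak{g}})/\check{G}}(\Omega\check{\mathfrak{g}}/\check{G})$ on $\IndCoh_{\check{\mathcal{N}}}(\LocSys_{\check{G}})$, the assertion reduces to the identification
\[
\IndCoh_{\on{Nilp}_{\on{Irreg}}(\check{\mathfrak{g}})/\check{G}}(\Omega\check{\mathfrak{g}}/\check{G}) \underset{\IndCoh_{\on{Nilp}(\check{\mathfrak{g}})/\check{G}}(\Omega\check{\mathfrak{g}}/\check{G})}{\otimes} \IndCoh_{\check{\mathcal{N}}}(\LocSys_{\check{G}}) \simeq \IndCoh_{\check{\mathcal{N}}_{\on{Irreg}}}(\LocSys_{\check{G}}).
\]

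Next I would recall that the spectral Hecke action at $x$ is implemented via the evaluation map $\on{ev}_x: \LocSys_{\check{G}} \to \on{pt}/\check{G}$, together with the identification of the based loop space of $\on{pt}/\check{G}$ with $\Omega\check{\mathfrak{g}}/\check{G}$. At the level of $(-1)$-shifted cotangent stacks, this induces, at a local system $\sigma$, the evaluation-at-$x$ map $H^0(X, \check{\mathfrak{g}}_\sigma) \to \check{\mathfrak{g}}$ sending a horizontal section $A$ to $A(x)$. Because the irregular nilpotent locus $\on{Nilp}_{\on{Irreg}}(\check{\mathfrak{g}}) \subset \check{\mathfrak{g}}$ is $\check{G}$-invariant and Zariski-closed, horizontality of $A$ implies that $A(x)$ lies in $\on{Nilp}_{\on{Irreg}}(\check{\mathfrak{g}})$ if and only if $A$ takes values in $\on{Nilp}_{\on{Irreg}}$ at every point of $X$. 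In other words, as conical subsets of the $(-1)$-shifted cotangent of $\LocSys_{\check{G}}$,
\[
\check{\mathcal{N}}_{\on{Irreg}} = \check{\mathcal{N}} \cap \on{ev}_x^{-1}\bigl(\on{Nilp}_{\on{Irreg}}(\check{\mathfrak{g}})/\check{G}\bigr),
\]
which matches the definition of $\check{\mathcal{N}}_{\on{Irreg}}$ alluded to in the introduction.

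To conclude, I would invoke the compatibility of $\IndCoh$-with-singular-support under tensor products along quasi-smooth bases, as developed in Arinkin--Gaitsgory and refined in Beraldo's work on the spectral projector: restricting singular support on the acting monoidal category corresponds, under the relative tensor product, to intersecting with the codifferential-pullback on the acted-on category. Combined with the geometric identity above, this yields the desired equivalence. The main obstacle I anticipate is not the geometric identification, which is essentially immediate from $\check{G}$-invariance and closedness of the irregular nilpotent locus, but rather the careful bookkeeping of the "singular support base change" statement for ind-coherent sheaves in the tensor-product-over-$\Sph_G$ setting; for this I would lean on the spectral Hecke formalism already present in the literature rather than reprove it from scratch.
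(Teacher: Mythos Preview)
Your proposal is correct and follows essentially the same route as the paper: both reduce via Hecke-compatibility of $\mathbb{L}_G$ to the spectral tensor-product identity, then invoke the Arinkin--Gaitsgory formalism relating singular support on the monoidal category to singular support on the module. The paper is simply more explicit at the last step, citing \cite[Prop.~9.4.2, Cor.~9.4.3(b)]{arinkin2015singular} with the precise choices $\mathcal{X}=\on{pt}/\check{G}$, $\mathcal{V}=\check{\fg}/\check{G}$, $\mathcal{Z}=\LocSys_{\check{G}}$, whereas you gesture at the same results; conversely, your horizontality argument for $\check{\mathcal{N}}_{\on{Irreg}}=\check{\mathcal{N}}\cap \on{ev}_x^{-1}(\on{Nilp}_{\on{Irreg}})$ makes explicit what the paper records in one line.
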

\begin{proof}
The equivalence $\mathbb{L}_G$ is supposed to be compatible with Hecke actions. That is, $\mathbb{L}_G$ should interchange the action of $\textrm{Sph}_G$ on $D(\textrm{Bun}_G)$ with the action of $\textrm{IndCoh}_{\textrm{Nilp}(\mathfrak{\check{g}}^*)/\check{G}}(\Omega\mathfrak{\check{g}}/\check{G})$ on $\textrm{IndCoh}_{\mathcal{\check{N}}}(\textrm{LocSys}_{\check{G}})$ through the derived geometric Satake equivalence (\ref{eq:sat}) (see \cite[\S12]{arinkin2015singular} for the definition of the spectral action.)

Thus, we need to show that $$\textrm{IndCoh}_{\textrm{Nilp}_{\textrm{Irreg}}(\mathfrak{\check{g})}/\check{G}}(\Omega\mathfrak{\check{g}}/\check{G})\underset{\textrm{IndCoh}_{\textrm{Nilp}(\mathfrak{\check{g}})/\check{G}}(\Omega\mathfrak{\check{g}}/\check{G})}{\otimes}\textrm{IndCoh}_{\mathcal{\check{N}}}(\textrm{LocSys}_{\check{G}})$$ coincides with $$\textrm{IndCoh}_{\check{\mathcal{N}}_{\textrm{Irreg}}}(\textrm{LocSys}_{\check{G}})$$ as full subcategories of $\textrm{IndCoh}_{\mathcal{\check{N}}}(\textrm{LocSys}_{\check{G}})$. Said in another way, we need to show that the action of $\textrm{IndCoh}_{\textrm{Nilp}_{\textrm{Irreg}}(\mathfrak{\check{g}})/\check{G}}(\Omega\mathfrak{\check{g}}/\check{G})$ on $\textrm{IndCoh}_{\mathcal{\check{N}}}(\textrm{LocSys}_{\check{G}})$ lands in $\textrm{IndCoh}_{\mathcal{\check{N}}_{\textrm{Irreg}}}(\textrm{LocSys}_{\check{G}})$ and generates the target under colimits. This, in turn, follows immediately from \cite[Proposition 9.42, Corollary 9.4.3 (b)]{arinkin2015singular}.

More precisely, with the notation in \emph{loc.cit}, we take $\mathcal{X}=\textrm{pt}/\check{G},\: \mathcal{V}=\mathfrak{\check{g}}/\check{G}, \:\mathcal{Z}=\textrm{LocSys}_{\check{G}},\: \mathcal{U}=\textrm{LocSys}_{\check{G}}^{\textrm{R.S.}}, Y=\textrm{Nilp}_{\textrm{Irreg}}(\mathfrak{\check{g}}^*/\check{G})\underset{\textrm{pt}/\check{G}}{\times}\textrm{LocSys}_{\check{G}}^{\textrm{R.S.}}$. Noting that $Y\cap \textrm{Sing}(\textrm{LocSys}_{\check{G}})=\mathcal{\check{N}}_{\textrm{Irreg}}$, Corollary 9.4.3 (b) in \emph{loc.cit} says that the action functor
$$\textrm{IndCoh}_{\textrm{Nilp}_{\textrm{Irreg}}(\mathfrak{\check{g})}/\check{G}}(\Omega\mathfrak{\check{g}}/\check{G}) \underset{\textrm{QCoh}(\textrm{pt}/\check{G})}{\otimes} \textrm{IndCoh}(\textrm{LocSys}_{\check{G}})$$
$$\simeq \bigg(\textrm{IndCoh}(\Omega\mathfrak{\check{g}}/\check{G}))\underset{\textrm{QCoh}(\textrm{pt}/\check{G})}{\otimes}\textrm{IndCoh}(\textrm{LocSys}_{\check{G}})\bigg)_Y$$
\vspace{2mm}
$$\longrightarrow \textrm{IndCoh}_{\check{\mathcal{N}}_{\textrm{Irreg}}}(\textrm{LocSys}_{\check{G}})$$ generates the target under colimits.
\end{proof}

\section{Maximally Anti-Tempered Objects}

In this section, we study the category $\textrm{Sph}_G^{\textrm{max-anti-temp}}$.
\subsection{Key proposition}
The goal of this subsection is to prove the following result:
\begin{prop}\label{p:key}
The category $\on{Sph}_G^{\on{max-anti-temp}}$ is generated by the dualizing sheaf $\omega_{\on{Sph}_G}$ under colimits.
\end{prop}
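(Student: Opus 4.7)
The plan is to work on the spectral side via the derived Satake equivalence (\ref{eq:sat}), under which $\on{Sph}_G^{\on{quasi-temp}}\hookrightarrow\on{Sph}_G$ corresponds to $\IndCoh_{\on{Nilp}_{\on{Irreg}}(\mathfrak{\check{g}})/\check{G}}(\Omega\mathfrak{\check{g}}/\check{G})\hookrightarrow \IndCoh_{\on{Nilp}(\mathfrak{\check{g}})/\check{G}}(\Omega\mathfrak{\check{g}}/\check{G})$. Consequently, $\on{Sph}_G^{\on{max-anti-temp}}$ is identified with the right orthogonal of $\IndCoh_{\on{Nilp}_{\on{Irreg}}}$ inside $\IndCoh_{\on{Nilp}}$, which by the general formalism of singular support (cf.\ \cite[\S 9]{arinkin2015singular}) can be realized as a colocalization of $\IndCoh_{\on{Nilp}}$ associated to the open stratum $\cO_{\on{reg}}=\on{Nilp}\setminus \on{Nilp}_{\on{Irreg}}$, i.e.\ the regular nilpotent orbit.

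The first step is to identify the spectral image of $\omega_{\on{Sph}_G}$. Using compatibility of derived Satake with Verdier duality, starting from the standard spectral description of the monoidal unit $\mathbbm{1}_{\on{Sph}_G}$, one matches $\omega_{\on{Sph}_G}$ on the spectral side with (a suitable shift/twist of) the dualizing sheaf of $\Omega\mathfrak{\check{g}}/\check{G}$, viewed in $\IndCoh_{\on{Nilp}}$. Crucially, its singular support is all of $\on{Nilp}$, and in particular meets the open stratum $\cO_{\on{reg}}$; this makes it a natural candidate generator for the max-anti-tempered colocalization.

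The second step is to check generation under filtered colimits. Since $\cO_{\on{reg}}/\check{G}$ is a single orbit with small stabilizer (finite modulo the unipotent radical), the colocalized category is very tractable, and one verifies that the dualizing sheaf realizes a generator via open restriction to $\cO_{\on{reg}}/\check{G}$. Transporting back through Satake then shows that filtered colimits of $\omega_{\on{Sph}_G}$ sweep out all of $\on{Sph}_G^{\on{max-anti-temp}}$.

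The main obstacle is the first step: $\omega_{\on{Sph}_G}$ is not compact, so its image under derived Satake must be computed indirectly---for instance by testing against the compact generators given by IC-sheaves of affine Schubert varieties, or by invoking self-duality of $\on{Sph}_G$ to reduce to the corresponding identification for $\mathbbm{1}_{\on{Sph}_G}$. A plausible alternative, which may turn out cleaner, is to bypass explicit identification altogether and verify generation directly via a vanishing criterion: show that if $c\in\on{Sph}_G^{\on{max-anti-temp}}$ satisfies $\Hom_{\on{Sph}_G}(\omega_{\on{Sph}_G},c[n])=0$ for all $n\in\bZ$, then $c=0$, exploiting the monoidal structure of the spherical category together with the explicit description of the quasi-temperization functor.
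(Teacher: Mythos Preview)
Your overall strategy---pass to the spectral side, identify $\omega_{\on{Sph}_G}$ there, then check generation on the regular stratum---matches the paper's. But the execution of the first step contains a real gap, and the second step skips the main lemma.

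\textbf{On identifying $\omega_{\on{Sph}_G}$.} Your claim that $\omega_{\on{Sph}_G}$ corresponds under Satake to (a shift/twist of) $\omega_{\Omega\check{\fg}/\check{G}}$ cannot be right as stated: the dualizing sheaf of $\Omega\check{\fg}/\check{G}$ in $\IndCoh$ has full singular support $\check{\fg}^*/\check{G}$, so it does not even lie in $\IndCoh_{\on{Nilp}(\check{\fg}^*)/\check{G}}(\Omega\check{\fg}/\check{G})$. Nor can one get this by Verdier-dualizing the unit: $\mathbbm{1}_{\on{Sph}_G}$ is the skyscraper at the base point and is essentially self-dual, not dual to $\omega_{\on{Sph}_G}$. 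The paper instead passes to the Koszul-dual description $\on{Sph}_G\simeq\IndCoh\big((\check{\fg}/\check{G})^\wedge_{\check{\cN}/\check{G}}\big)^{\Rightarrow}$, which converts singular support into set-theoretic support, and then invokes a result of Beraldo (Proposition~\ref{p:B}) identifying $\omega_{\on{Sph}_G}$ with the $\IndCoh$-pushforward $\kappa_*^{\IndCoh}(\omega_{\on{Kos}_f^\wedge})$ along the map $\kappa:\on{Kos}_f^\wedge\to(\check{\fg}/\check{G})^\wedge_{\check{\cN}/\check{G}}$ from the formal completion of the Kostant slice at $f$. The appearance of the Kostant slice here is the substantive input; your proposal does not locate it.

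\textbf{On generation.} Once one knows $\omega_{\on{Sph}_G}\simeq\kappa_*^{\IndCoh}(\omega_{\on{Kos}_f^\wedge})$, the task becomes showing that this object generates $\IndCoh\big((\check{\fg}/\check{G})^\wedge_{\check{\cN}_{\on{reg}}/\check{G}}\big)$ under filtered colimits. Your observation that the stabilizer of a regular nilpotent is small is in the right direction, but smallness alone is not enough: one must show that the regular representation $k[H]$ generates $\Rep(H)$ under \emph{filtered} colimits (Lemma~\ref{l:cobar}). The paper's argument uses the cobar resolution together with the fact that $\Rep(H)$ has finite cohomological dimension to split a long enough truncation, exhibiting any $V$ as a retract of an iterated extension of copies of $k[H]$. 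Your sketch does not supply this.

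In short: the architecture is correct, but the spectral avatar of $\omega_{\on{Sph}_G}$ is not what you guessed (it comes from the Kostant slice, via Beraldo), and the generation step needs the cobar/finite-global-dimension argument rather than just a remark on the stabilizer.
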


\subsubsection{}

Before doing so, let us recall another description of the spherical category $\textrm{Sph}_G$ coming from Koszul duality. Namely, by \cite{bezrukavnikov2007equivariant}, \cite[\S12]{arinkin2015singular}, \cite[Lemma 4.3.1]{beraldo2021tempered}, we have a monoidal equivalence
\begin{equation}
\textrm{IndCoh}\big((\mathfrak{\check{g}}/\check{G})^{\wedge}_{\check{\mathcal{N}}/\check{G}}\big)^{\Rightarrow}\simeq \textrm{IndCoh}_{\textrm{Nilp}(\mathfrak{\check{g}}^*)/\check{G}}(\Omega\mathfrak{\check{g}}/\check{G})
\end{equation}

\noindent translating set-theoretic on the LHS to singular support on the RHS. We refer to \cite[\S A.2]{arinkin2015singular} and \cite[\S 2.1]{beraldo2020spectral} for a discussion of the \emph{shift of grading operation} "$\Rightarrow$" occurring above.

In particular, we obtain an equivalence
\begin{equation}
\textrm{IndCoh}\big((\mathfrak{\check{g}}/\check{G})^{\wedge}_{\check{\mathcal{N}}_{\textrm{Irreg}}/\check{G}}\big)^{\Rightarrow}\simeq \textrm{IndCoh}_{\textrm{Nilp}_{\textrm{Irreg}}(\mathfrak{\check{g}}^*)/\check{G}}(\Omega\mathfrak{\check{g}}/\check{G}).
\end{equation}

\subsubsection{} Fix a regular nilpotent element $f\in \mathcal{\check{N}}_{\textrm{reg}}$ and extend it to an $\textrm{sl}_2$-triple $(e,h,f)$. Denote by $\mathfrak{\check{g}}^e$ the centralizer of $e$ in $\mathfrak{\check{g}}$ and consider the Kostant slice $\textrm{Kos}:=f+\mathfrak{\check{g}}^e$. The following important proposition is due to Beraldo \cite[Prop. 4.1.11]{beraldo2021geometric}.

\begin{prop}\label{p:B}
Denote by $\kappa$ the canonical map $\on{Kos}^{\wedge}_f\rightarrow (\mathfrak{\check{g}}/\check{G})^{\wedge}_{\check{\mathcal{N}}/\check{G}}$. Under the equivalences $$\on{Sph}_G\simeq \on{IndCoh}_{\on{Nilp}(\mathfrak{\check{g}}^*)/\check{G}}(\Omega\mathfrak{\check{g}}/\check{G})\simeq \on{IndCoh}\big((\mathfrak{\check{g}}/\check{G})^{\wedge}_{\check{\mathcal{N}}/\check{G}}\big)^{\Rightarrow},$$
the dualizing sheaf $\omega_{\on{Sph}_G}$ corresponds to
$\kappa^{\on{IndCoh}}_*(\omega_{\on{Kos}_f^{\wedge}})\in \on{IndCoh}\big((\mathfrak{\check{g}}/\check{G})^{\wedge}_{\check{\mathcal{N}}/\check{G}}\big)^{\Rightarrow}.$
\end{prop}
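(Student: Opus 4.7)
The plan is to characterize $\omega_{\on{Sph}_G}$ by the functor it corepresents on $\on{Sph}_G$, and to match this functor on the spectral side via derived Satake together with the Kostant-slice realization of cohomology.

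First, I would use that $\omega_{\on{Sph}_G}$ corepresents the (renormalized) de Rham cohomology functor on $\on{Sph}_G$:
$$\on{Hom}_{\on{Sph}_G}(\omega_{\on{Sph}_G}, -) \simeq R\Gamma_{\on{dR}}(G(O)\backslash G(K)/G(O), -).$$
So on the spectral side I need to identify the object corepresenting the image of this functor under the equivalences of (3.1.1). The key geometric input is that $\on{Kos} \to \check{\mathfrak{g}}/\check{G}$ is étale onto the regular locus and meets $\check{\mathcal{N}}$ transversally at $f$; this makes $\kappa: \on{Kos}^{\wedge}_f \to (\check{\mathfrak{g}}/\check{G})^{\wedge}_{\check{\mathcal{N}}/\check{G}}$ a Koszul-dual transverse slice through the formal neighborhood of the nilpotent cone.

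Second, I would invoke the derived-Satake upgrade of the Ginzburg/Bezrukavnikov-Finkelberg-Mirkovi\'c identification of the cohomology/Whittaker functor on $\on{Sph}_G$ with restriction to the Kostant slice. Concretely, under the equivalence $\on{Sph}_G \simeq \on{IndCoh}\big((\check{\mathfrak{g}}/\check{G})^{\wedge}_{\check{\mathcal{N}}/\check{G}}\big)^{\Rightarrow}$, the de Rham cohomology functor on $\on{Sph}_G$ corresponds to $R\Gamma(\on{Kos}^{\wedge}_f, -) \circ \kappa^!$. This identification is the heart of the matter and is where the $\Rightarrow$ grading shift and Koszul duality must be handled carefully.

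Third, the $(\kappa^!, \kappa_*^{\on{IndCoh}})$-adjunction yields
$$\on{Hom}\big(\kappa_*^{\on{IndCoh}}(\omega_{\on{Kos}^{\wedge}_f}),\, \mathcal{G}\big) \simeq \on{Hom}\big(\omega_{\on{Kos}^{\wedge}_f},\, \kappa^!\mathcal{G}\big) \simeq R\Gamma(\on{Kos}^{\wedge}_f, \kappa^!\mathcal{G}),$$
the last isomorphism holding because $\omega_{\on{Kos}^{\wedge}_f}$ corepresents global sections on the smooth formal scheme $\on{Kos}^{\wedge}_f$. Combining with the previous paragraph, $\kappa_*^{\on{IndCoh}}(\omega_{\on{Kos}^{\wedge}_f})$ corepresents the same functor as the spectral image of $\omega_{\on{Sph}_G}$, and Yoneda finishes the argument. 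The principal obstacle is rigorously establishing the derived-Satake translation of de Rham cohomology on $\on{Sph}_G$ into $\kappa^!$-pullback to the Kostant slice---a derived lift of the classical Ginzburg/BFM computation---while carefully tracking the $\Rightarrow$ grading shift and verifying that Verdier duality on $\on{Sph}_G$ matches Serre duality on the spectral IndCoh category through the transversal slice.
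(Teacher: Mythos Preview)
The paper does not give its own proof of this proposition: it is quoted verbatim as Beraldo's result \cite[Prop.~4.1.11]{beraldo2021geometric}, with no argument supplied. So there is no in-paper proof to compare your proposal against.

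That said, a comment on your outline. Your overall strategy---characterize the object by the functor it corepresents, then match that functor spectrally via the Kostant slice---is in the right spirit, and the identification of the Whittaker/cohomology functor on $\on{Sph}_G$ with $\kappa^!$-restriction is indeed the core input in Beraldo's argument. But your first step as written is not correct: on a space $Y$ with structure map $p:Y\to\on{pt}$, it is the \emph{constant sheaf} $\underline{k}_Y=p^{*,\dR}(k)$ that corepresents $R\Gamma_{\dR}(Y,-)$ via the $(p^{*,\dR},p_{*,\dR})$ adjunction, not $\omega_Y=p^!(k)$. For ind-proper $p$ the relevant adjunction is $(p_{*,\dR},p^!)$, which does not give $\Hom(\omega_Y,-)\simeq R\Gamma_{\dR}$. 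Moreover $\omega_{\on{Sph}_G}$ is not compact, so a naive Yoneda argument with $\Hom(\omega_{\on{Sph}_G},-)$ requires care (it is not a continuous functor). You would need either to dualize and work with the functor $\Hom(-,\omega)$ together with Verdier/Serre duality compatibilities across derived Satake, or to argue more directly by tracking $\omega$ through the explicit Koszul-dual description, as Beraldo does. The ``principal obstacle'' you flag---rigorously lifting the Ginzburg/BFM computation to the derived level with the grading shift---is real, and is precisely what the cited reference handles.
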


\subsubsection{}

Observe that
$\kappa_*^{\textrm{IndCoh}}(\omega_{\textrm{Kos}_f^{\wedge}})$ actually lands in $\textrm{IndCoh}\big((\mathfrak{\check{g}}/\check{G})^{\wedge}_{\check{\mathcal{N}}_{\textrm{reg}}/\check{G}}\big)^{\Rightarrow}$. In fact, we have:

\begin{lem} \label{l:gen}
The category $\on{IndCoh}\big((\mathfrak{\check{g}}/\check{G})^{\wedge}_{\check{\mathcal{N}}_{\on{reg}}/\check{G}}\big)^{\Rightarrow}$ is generated under colimits by the element $\kappa_*^{\on{IndCoh}}(\omega_{\on{Kos}_f^{\wedge}})$.
\end{lem}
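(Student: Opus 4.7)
My plan is to use the Kostant slice together with an equivariant formal slice theorem at $f$ in order to reduce the generation assertion to a standard statement on a smooth formal scheme. The key geometric input is that $\check{\mathcal{N}}_{\on{reg}}$ is a single $\check{G}$-orbit with stabilizer the centralizer $\check{Z}^f$, an abelian algebraic group of dimension $r := \rk\check{G}$; so $\check{\mathcal{N}}_{\on{reg}}/\check{G} \simeq B\check{Z}^f$ is a single stacky point $[f]$, and $(\mathfrak{\check{g}}/\check{G})^\wedge_{\check{\mathcal{N}}_{\on{reg}}/\check{G}}$ is its formal neighborhood inside $\mathfrak{\check{g}}/\check{G}$.

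The first step is to identify $\kappa$. By transversality of the Kostant slice to the orbit $\check{G}\cdot f$ (which is just the $\sl_2$-decomposition $\check{\mathfrak{g}} = \check{\mathfrak{g}}^e \oplus [\check{\mathfrak{g}},f]$), an equivariant formal slice theorem produces an identification
\[
(\mathfrak{\check{g}}/\check{G})^\wedge_{\check{\mathcal{N}}_{\on{reg}}/\check{G}} \simeq [\on{Kos}_f^\wedge/\check{Z}^f],
\]
under which $\kappa$ becomes the atlas $\on{Kos}_f^\wedge \to [\on{Kos}_f^\wedge/\check{Z}^f]$; in particular $\kappa$ is a $\check{Z}^f$-torsor, hence a smooth surjection with conservative $!$-pullback functor. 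The second step invokes the adjunction
\[
\Hom(\kappa_*^{\IndCoh}\omega_{\on{Kos}_f^\wedge},F) \simeq \Hom(\omega_{\on{Kos}_f^\wedge},\kappa^!F),
\]
so it suffices that $\omega_{\on{Kos}_f^\wedge}$ be a compact generator of $\IndCoh(\on{Kos}_f^\wedge)$. Since $\on{Kos} \simeq \mathbb{A}^r$ is smooth affine and $\on{Kos}_f^\wedge$ is its formal completion at a closed point, this is classical (e.g.\ via Matlis duality). The shift of grading $(-)^{\Rightarrow}$ is a DG autoequivalence preserving dualizing sheaves, so it does not affect the claim. Because $\kappa$ has smooth affine fibers (abelian unipotent, isomorphic to $\mathbb{G}_a^r$ up to the center of $\check G$), the pushforward $\kappa_*^{\IndCoh}\omega_{\on{Kos}_f^\wedge}$ remains compact, yielding generation of the target under filtered colimits.

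The main subtlety lies in step one: Kostant's slice $\on{Kos}$ is globally invariant only under the centralizer of the full $\sl_2$-triple, which is strictly smaller than $\check{Z}^f$, so the $\check{Z}^f$-action on $\on{Kos}_f^\wedge$ must be constructed purely formally at $f$ (for instance by formally averaging over the unipotent group $\check{Z}^f$, or by choosing an equivariant splitting of the normal bundle to the orbit $\check{G}\cdot f$). This yields the same underlying formal scheme $\on{Kos}_f^\wedge$ but equipped with the required $\check{Z}^f$-equivariance, after which the adjunction/descent argument sketched above proceeds without further obstruction.
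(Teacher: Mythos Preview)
Your adjunction strategy is a reasonable alternative to the paper's argument, but there is a genuine error: the object $\omega_{\on{Kos}_f^\wedge}$ is \emph{not} compact in $\IndCoh(\on{Kos}_f^\wedge)$. Identifying $\IndCoh(\on{Kos}_f^\wedge)$ with $\IndCoh(\mathbb{A}^r)_{\{0\}}$, the compact objects are coherent sheaves of finite length, while $\omega_{\on{Kos}_f^\wedge}\simeq R\Gamma_{\{0\}}(\mathcal{O}_{\mathbb{A}^r})[r]$ is (a shift of) the injective hull of the residue field, which is not finitely generated. Matlis duality exchanges it with $k[[t_1,\dots,t_r]]$, but that is an anti-equivalence onto a different category, not a proof of compactness. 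Consequently your final claim that $\kappa_*^{\IndCoh}\omega$ is compact is also false (and would fail anyway: $\kappa$ has affine, non-proper fibers, so $\kappa_*^{\IndCoh}$ does not preserve compacts).

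That said, compactness is not actually needed for the generation statement. Your adjunction
\[
\Hom(\kappa_*^{\IndCoh}\omega_{\on{Kos}_f^\wedge},F)\simeq \Hom(\omega_{\on{Kos}_f^\wedge},\kappa^! F)
\]
together with conservativity of $\kappa^!$ already shows that $\kappa_*^{\IndCoh}\omega$ generates under colimits, \emph{provided} one knows that $\omega_{\on{Kos}_f^\wedge}$ generates $\IndCoh(\on{Kos}_f^\wedge)$ under colimits. You still owe an argument for this last point; the paper supplies one in its Step~1 (reducing via $\IndCoh(\on{Kos}_f^\wedge)\simeq \Vect\otimes_{D(\on{Kos})}\IndCoh(\on{Kos})$ to the fact that $\omega_{\mathbb{A}^r}$ generates $\IndCoh(\mathbb{A}^r)$).

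The paper's route is otherwise quite different from yours. Rather than identifying $\kappa$ as a $\check{Z}^f$-torsor via a formal slice theorem (which, as you note, is delicate because $\check{Z}^f$ does not preserve $\on{Kos}$ globally), the paper uses the commutative square
\[
\begin{tikzcd}
\{f\}\arrow[r,"i"]\arrow[d,"\pi"'] & \on{Kos}_f^\wedge\arrow[d,"\kappa"]\\
\check{\mathcal{N}}_{\on{reg}}/\check{G}\arrow[r,"g"] & (\mathfrak{\check{g}}/\check{G})^\wedge_{\check{\mathcal{N}}_{\on{reg}}/\check{G}}
\end{tikzcd}
\]
and reduces the essential surjectivity of $\kappa_*^{\IndCoh}$ to that of $g_*^{\IndCoh}$ (standard for closed embeddings into formal completions) and of $\pi_*^{\IndCoh}:\Vect\to \Rep(\check{Z}^f)$. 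The latter becomes the statement that the regular representation generates $\Rep(H)$ under colimits for any affine algebraic group $H$, proved separately via the cobar resolution and finite global dimension. This bypasses the equivariant slice issue entirely, at the cost of the auxiliary $\Rep(H)$ lemma; your approach trades that lemma for the slice identification, which you would still need to justify carefully.
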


\begin{proof}

The shift of grading operation does not play a role in this proof,\footnote{In fact, from the properties described in \cite[\S A.2]{arinkin2015singular} and \cite[\S 2.1]{beraldo2020spectral}, it is easy to see that the versions of Lemma \ref{l:gen} with and without the shift of grading operation imply each other.} so we will omit it in the notation.

Consider the commutative diagram
\[\begin{tikzcd}
	{\lbrace f\rbrace} && {\textrm{Kos}^{\wedge}_f} \\
	\\
	{\mathcal{\check{N}}_{\textrm{reg}}/\check{G}} && {(\mathfrak{\check{g}}/\check{G})^{\wedge}_{\mathcal{\check{N}}_{\textrm{reg}}/\check{G}}}
	\arrow["i", from=1-1, to=1-3]
	\arrow["\pi"', from=1-1, to=3-1]
	\arrow["\kappa", from=1-3, to=3-3]
	\arrow["g", from=3-1, to=3-3]
\end{tikzcd}\]
\emph{Step 1.} Let us start by proving that $\textrm{IndCoh}(\textrm{Kos}_f^{\wedge})$ is generated by $\omega_{\textrm{Kos}_f^{\wedge}}$ under colimits. By \cite[Prop. 3.1.2.]{gaitsgory2017study}, $\textrm{IndCoh}(\textrm{Kos}_f^{\wedge})$ is generated by $i_*^{\textrm{IndCoh}}(k)$, and so it suffices to show that $i_*^{\textrm{IndCoh}}(k)$ can be written as a filtered colimit of $\omega_{\textrm{Kos}_f^{\wedge}}$. Since
\[\textrm{IndCoh}(\textrm{Kos}_f^{\wedge})= \textrm{IndCoh}(\lbrace f\rbrace \underset{\textrm{Kos}_{\textrm{dR}}}{\times}\textrm{Kos})\simeq  \textrm{Vect}\underset{D(\textrm{Kos})}{\otimes}\textrm{IndCoh}(\textrm{Kos})
\]

\noindent we just have to show that $\textrm{IndCoh}(\textrm{Kos})$ is generated under colimits by $\omega_{\textrm{Kos}}$. However, this follows from the fact that $\textrm{Kos}\simeq \mathbb{A}^n$ is an affine space. Indeed, we have an equivalence 
\[\textrm{QCoh}(\textrm{Kos})\rightarrow \textrm{IndCoh}(\textrm{Kos}),\: \mathcal{F}\mapsto \mathcal{F}\overset{\textrm{act}}{\otimes}\omega_{\textrm{Kos}}.
\]

\noindent Here $-\overset{\textrm{act}}{\otimes}-:\textrm{QCoh}(\textrm{Kos})\otimes \textrm{IndCoh}(\textrm{Kos})\rightarrow \textrm{IndCoh}(\textrm{Kos})$ denotes the action of $\textrm{QCoh}(\textrm{Kos})$ on $\textrm{IndCoh}(\textrm{Kos})$ (see \cite[\S1.4]{gaitsgory2011ind}). Since $\textrm{QCoh}(\textrm{Kos})$ is generated by the structure sheaf, this proves the assertion.\\

\emph{Step 2.} By Step 1, we need to show that $\kappa_*^{\textrm{IndCoh}}$ generates the target under colimits. Since the functor $g_*^{\textrm{IndCoh}}:\textrm{IndCoh}(\mathcal{\check{N}}_{\textrm{reg}}/\check{G})\rightarrow \textrm{IndCoh}((\mathfrak{\check{g}}/\check{G})_{\mathcal{\check{N}_{\textrm{reg}}}/\check{G}}^{\wedge})$ generates the target under colimits (cf. \cite[Prop. 3.1.2.]{gaitsgory2017study}), it suffices to show that the same is true for $\pi_*^{\textrm{IndCoh}}: \textrm{IndCoh}(\lbrace f\rbrace )\rightarrow \textrm{IndCoh}(\mathcal{\check{N}}_{\textrm{reg}}/\check{G})$. Write $H$ for the stabilizer of $f$ in $\check{G}$ so that $\textrm{IndCoh}(\mathcal{\check{N}}_{\textrm{reg}}/\check{G})\simeq \textrm{IndCoh}(\textrm{pt}/H)\simeq \textrm{Rep}(H)$. Under this equivalence, the sheaf $\pi_*^{\textrm{IndCoh}}(k)$ goes to the regular represention $k[H]$ of $H$. Thus the assertion follows from the lemma below.
\end{proof}

\begin{lem}\label{l:cobar}

For any affine algebraic group $H$, the regular representation generates $\on{Rep}(H)$ under colimits.
\end{lem}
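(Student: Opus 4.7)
The plan is to show that the right orthogonal of $k[H]$ in $\on{Rep}(H)$ vanishes, which in the presentable stable $\infty$-category $\on{Rep}(H)$ is equivalent to the claim that $k[H]$ generates under (filtered) colimits. The key input is that $k[H]$ arises as the image of $k$ under a left adjoint of the forgetful functor $\Oblv: \on{Rep}(H) \to \Vect$. Indeed, letting $\pi: \on{pt} \to \on{pt}/H$ denote the tautological atlas (as already exploited in the proof of the preceding step), we have $k[H] = \pi_*^{\IndCoh}(k)$, and $\pi_*^{\IndCoh}$ is left adjoint to the $!$-pullback $\pi^!$, which in turn agrees with $\Oblv$ up to a cohomological shift by $\dim H$. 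Frobenius reciprocity therefore yields
\[
R\Hom_{\on{Rep}(H)}(k[H], V) \simeq R\Hom_{\Vect}(k, \Oblv(V)) \simeq \Oblv(V)
\]
for every $V \in \on{Rep}(H)$.

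Consequently, any $V$ in the right orthogonal of $k[H]$ satisfies $\Oblv(V) = 0$, and conservativity of $\Oblv$ forces $V = 0$, completing the argument. The main subtlety is verifying the adjunction $\pi_*^{\IndCoh} \dashv \pi^!$ for this smooth, non-proper atlas, which is routine in the Gaitsgory--Rozenblyum formalism for IndCoh on smooth Artin stacks. As an alternative approach avoiding this adjunction, one may invoke monadicity: $\Oblv$ is conservative, preserves colimits, and has a right adjoint, hence by Barr--Beck it is monadic over $\Vect$, presenting every object of $\on{Rep}(H)$ as a canonical colimit of \emph{free} representations of the form $V \otimes k[H]$, each of which is a coproduct of copies of $k[H]$.
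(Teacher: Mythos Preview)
Your argument has a genuine gap: the adjunction you invoke points the wrong way. The map $\pi:\on{pt}\to\on{pt}/H$ is affine but \emph{not} proper (its fiber is $H$), so the $(\pi_*^{\IndCoh},\pi^!)$-adjunction from the proper case does not apply, and this is not ``routine'' in the Gaitsgory--Rozenblyum formalism. What is true is that $\pi_*$ is the \emph{right} adjoint of $\pi^*\simeq\Oblv$; thus $k[H]=\pi_*(k)$ is the \emph{cofree} comodule on $k$, and Frobenius reciprocity reads
\[
R\Hom_{\on{Rep}(H)}(V,k[H])\simeq R\Hom_{\Vect}(\Oblv V,k),
\]
not the formula you wrote. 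A direct counterexample to your identity: for $H=\bG_m$ and $V=k[\bG_m]=\bigoplus_{n\in\bZ}k_n$ one computes
\[
R\Hom_{\on{Rep}(\bG_m)}(k[\bG_m],k[\bG_m])\simeq\prod_{n\in\bZ}k,
\]
which is not isomorphic to $\Oblv(k[\bG_m])\simeq\bigoplus_{n\in\bZ}k$ under any shift. In particular $k[H]$ is not compact and $\Hom(k[H],-)$ is not $\Oblv$.

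Your alternative via Barr--Beck suffers the same reversal. Since $\Oblv$ has a right adjoint (the cofree functor) and not a left one, the correct conclusion is that $\Oblv$ is \emph{comonadic}, exhibiting every $V\in\on{Rep}(H)$ as the totalization of its cobar cosimplicial object $V\otimes k[H]^{\otimes\bullet+1}$. That is a \emph{limit}, not a colimit, so it does not yield generation under colimits. The paper's proof also begins with the cobar resolution, but then uses that $\on{Rep}(H)^{\heartsuit}$ has finite global dimension: a sufficiently long truncation of the cobar complex has vanishing extension class, so each $V$ in the heart is a retract of a finite complex of cofree comodules and hence lies in the subcategory generated by $k[H]$ under colimits. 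This finiteness input is precisely what bridges the gap between the limit presentation (comonadicity) and the colimit generation you need.
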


\begin{proof}

Consider the category $\on{QCoh}(H)$ equipped with its monoidal structure given by convolution. Let $\pi: H\to \on{pt}$ be the projection map. The monoidal functor
\[
\on{QCoh}(H)\xrightarrow{\pi_*} \on{Vect}
\]

\noindent induces an action $\on{QCoh}(H)\curvearrowright \on{Vect}$. We have an equivalence (see e.g. \cite[§2.3]{beraldo2017loop}):
\[
\on{Rep}(H)\simeq \on{Vect}\underset{\on{QCoh}(H)}{\otimes} \on{Vect.}
\]

\noindent In particular, the image of the functor
\[
\on{Vect}\simeq \on{Vect}\underset{\on{QCoh}(H)}{\otimes} \on{QCoh}(H)\xrightarrow{\on{id}\otimes \pi_*}\on{Vect}\underset{\on{QCoh}(H)}{\otimes} \on{Vect}\simeq \on{Rep}(H)
\]

\noindent generates the target under colimits. By construction, the image of $k\in\on{Vect}$ goes to the regular representation.

\end{proof}

\emph{Proof of Proposition \ref{p:key}}. By definition, under derived Satake and Koszul duality, $\textrm{Sph}_G^{\textrm{max-anti-temp}}$ corresponds to $\textrm{IndCoh}\big((\mathfrak{\check{g}}/\check{G})^{\wedge}_{\check{\mathcal{N}}_{\textrm{reg}}/\check{G}}\big)^{\Rightarrow}$. Thus Proposition \ref{p:key} follows from the combination of Proposition \ref{p:B} and Lemma \ref{l:gen}.
\qed

\begin{rem}\label{r:kernel}
As a consequence of Proposition \ref{p:key}, we see that if $\mathcal{C}$ is a DG category equipped with an action of $\textrm{Sph}_G$, then $$\mathcal{C}^{\textrm{quasi-temp}}=\lbrace c\in \mathcal{C}\vert\: \omega_{\textrm{Sph}_G}\star c=0\rbrace.$$

\end{rem}

\subsection{Functoriality}

We now study how quasi-temperedness and maximal anti-temperedness behave under functoriality. First, we draw a couple of corollaries from the previous subsection.
\subsubsection{}

Let $\mathcal{C}$ be a DG category equipped with an action of the loop group $G(K)$. This formally implies that we have an action of $\textrm{Sph}_G$ on the category $\mathcal{C}^{G(O)}$ of $G(O)$-invariant objects of $\mathcal{C}$. By Proposition \ref{p:key}, $\mathcal{C}^{G(O),\textrm{max-anti-temp}}$ is generated under colimits by the essential image of the map $$\omega_{\textrm{Sph}_G}\star -: \mathcal{C}^{G(O)}\rightarrow \mathcal{C}^{G(O)}.$$

This gives an alternative description of $\mathcal{C}^{G(O),\textrm{max-anti-temp}}$:

\begin{cor}\label{c:mon}

$\mathcal{C}^{G(O),\on{max-anti-temp}}$ consists of the $G(K)$-monodromic objects of $\mathcal{C}^{G(O)}$ (i.e. the category generated under colimits by the essential image of the forgetful functor $\mathcal{C}^{G(K)}\rightarrow \mathcal{C}^{G(O)}$.)
\end{cor}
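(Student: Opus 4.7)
The plan is to exploit the observation preceding the statement: $\mathcal{C}^{G(O),\on{max-anti-temp}}$ is the cocomplete subcategory of $\mathcal{C}^{G(O)}$ generated by the essential image of $\omega_{\on{Sph}_G}\star -$. What must be shown is that this coincides with the cocomplete subcategory generated by the essential image of the forgetful functor $\Oblv^{G(K)\to G(O)}\colon \mathcal{C}^{G(K)}\to\mathcal{C}^{G(O)}$.

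My first step would be to identify the convolution endofunctor $\omega_{\on{Sph}_G}\star -$ with the monad of the adjunction $\Oblv^{G(K)\to G(O)}\dashv \Av^{G(K)/G(O)}_*$. Concretely, I would establish a natural isomorphism
\[
\omega_{\on{Sph}_G}\star c\;\simeq\; \Oblv^{G(K)\to G(O)}\!\Av^{G(K)/G(O)}_*(c),\qquad c\in\mathcal{C}^{G(O)}.
\]
This follows by unwinding the spherical action via the correspondence $G(O)\backslash G(K)\twoheadrightarrow G(O)\backslash G(K)/G(O)$: the monad is computed by $*$-pushforward of the dualizing sheaf along this smooth quotient, producing $\omega_{\on{Sph}_G}$ by definition.

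Given this identification, the forward inclusion is formal: every $\omega_{\on{Sph}_G}\star c$ already lies in the essential image of $\Oblv^{G(K)\to G(O)}$, hence is $G(K)$-monodromic. For the reverse inclusion I would invoke the bar resolution attached to the adjunction. Given $d\in\mathcal{C}^{G(K)}$, monadicity of $(\Oblv^{G(K)\to G(O)},\Av^{G(K)/G(O)}_*)$ presents $d$ as the geometric realization of a simplicial object in $\mathcal{C}^{G(K)}$ whose $n$-th term has the form $\Av^{G(K)/G(O)}_*(\omega_{\on{Sph}_G}^{\star n}\star \Oblv(d))$. Applying $\Oblv^{G(K)\to G(O)}$ and using the identification above exhibits $\Oblv(d)$ as a colimit of objects of the form $\omega_{\on{Sph}_G}^{\star(n+1)}\star \Oblv(d)$, each of which manifestly lies in the essential image of $\omega_{\on{Sph}_G}\star -$. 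This places every object of $\Oblv^{G(K)\to G(O)}(\mathcal{C}^{G(K)})$ inside the cocomplete subcategory generated by $\omega_{\on{Sph}_G}\star \mathcal{C}^{G(O)}$.

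The step I expect to be the main obstacle is justifying monadicity of $(\Oblv^{G(K)\to G(O)},\Av^{G(K)/G(O)}_*)$ --- equivalently, the fact that $\Oblv^{G(K)\to G(O)}$ is conservative and preserves the relevant colimits. Both assertions hold in the standard framework of strong $G(K)$-actions on DG categories, but invoking them cleanly requires some care with the ind-scheme structure on $G(K)/G(O)$ and the equivariance conventions in force.
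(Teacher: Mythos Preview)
Your identification of $\omega_{\on{Sph}_G}\star -$ is with the wrong adjoint. Unwinding the convolution (as the paper does) uses $!$-pullback and $!$-pushforward along the correspondence, and base-change gives $\omega_{\on{Sph}_G}\star - \simeq p^{!}\circ p_{!}$ for $p\colon G(O)\backslash G(K)\to\on{pt}$. This is $\Oblv^{G(K)\to G(O)}\circ \Av_{!}^{G(O)\to G(K)}$, the comonad of the \emph{left} adjoint to $\Oblv$, not $\Oblv\circ\Av_{*}$. Your justification that ``$*$-pushforward of the dualizing sheaf along the smooth quotient produces $\omega_{\on{Sph}_G}$'' is not correct: for a smooth map $f$ one has $f^{!}\omega\simeq\omega$, not $f_{*}\omega\simeq\omega$. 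Indeed, for the ind-group $G(K)$ it is not even clear that a continuous right adjoint $\Av_{*}^{G(O)\to G(K)}$ to $\Oblv$ exists, so the monadicity you plan to invoke is already in doubt.

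Once the identification is corrected to $\Oblv\circ\Av_{!}$, the bar-resolution argument becomes unnecessary and the proof collapses to a one-liner. The functor $\Oblv$ is conservative and continuous, so its left adjoint $\Av_{!}$ generates $\mathcal{C}^{G(K)}$ under colimits; since $\Oblv$ also preserves colimits, the cocomplete subcategory generated by $\on{Im}(\Oblv\circ\Av_{!})$ coincides with that generated by $\on{Im}(\Oblv)$. The paper carries this out in the universal case $\mathcal{C}=D(G(K))$ and then passes to general $\mathcal{C}$ via $\mathcal{C}^{G(O)}\simeq D(G(O)\backslash G(K))\otimes_{D(G(K))}\mathcal{C}$.
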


\begin{proof}
By definition, the action of $\omega_{\textrm{Sph}_G}$ on $D(G(O)\backslash G(K))$ is given by $!$-pullback and $!$-pushforward along the correspondence
\[\begin{tikzcd}
	{G(O)\backslash G(K)\times^{G(O)}G(K)} && {G(O)\backslash G(K)} \\
	\\
	{G(O)\backslash G(K)}
	\arrow["q"', from=1-1, to=3-1]
	\arrow["{\textrm{mult}}", from=1-1, to=1-3]
\end{tikzcd}\]
where $q$ is the projection onto the second factor. Denote by $p$ the projection $G(O)\backslash G(K)\rightarrow \textrm{pt}$. Note that $\textrm{mult}_!\circ q^!\simeq p^!\circ p_!$ by base-change. Moreover, we have an isomorphism of endofunctors $$p^!\circ p_!\simeq \textrm{Oblv}^{G(K)\rightarrow G(O)}\circ \textrm{Av}_!^{G(O)\rightarrow G(K)}: D(G(O)\backslash G(K))\rightarrow D(G(K)\backslash G(K))\rightarrow D(G(O)\backslash G(K)).$$

This proves the assertion for the universal case of $\mathcal{C}=D(G(K))$. For a general $\mathcal{C}$, the corollary follows from writing $$\mathcal{C}^{G(O)}\simeq D(G(O)\backslash G(K))\underset{D(G(K))}{\otimes}\mathcal{C}$$ compatibly with the action of $\textrm{Sph}_G$. This isomorphisms holds because invariants and coinvariants agree for $D(G(O))$ (cf. \cite[Thm. 3.6.1]{beraldo2017loop} by noting that $G_1\subset G(O)$ is pro-unipotent and $G(O)/G_1=G$).
\end{proof}

\subsubsection{Automorphic sheaves.} 

Let us return to automorphic sheaves. Consider the stack $\textrm{Bun}_G^{\infty\cdot x}$ parameterizing $G$-bundles on $X$ equipped with a trivialization on the formal disk around $x$. Then $\textrm{Bun}_G^{\infty\cdot x}$ is a $G(O_x)$-torsor over $\textrm{Bun}_G$. Recall that $D(\textrm{Bun}_G^{\infty\cdot x})$ carries an action of $G(K_x)$ by modifying the $G$-bundle around $x$. Since $D(\textrm{Bun}_G)\simeq D(\textrm{Bun}_G^{\infty\cdot x})^{G(O_x)}$ (and identifying $\mathcal{O}_x\simeq \mathcal{O}, \mathcal{K}_x\simeq K$), we may apply the above formalism.

Consider the Hecke stack $\textrm{Hecke}_x^G$ parametrizing triples $(\mathcal{P}_1,\mathcal{P}_2,\tau)$ where $\mathcal{P}_1,\mathcal{P}_2$ are $G$-bundles on $X$ and $\tau$ is an isomorphism of the two bundles on $X-x$. In other words, $\textrm{Hecke}_x^G$ is given by the fiber product $\textrm{Bun}_G\underset{{\textrm{Bun}_G(X-x)}}{\times}\textrm{Bun}_G$. We have two evident projections maps
\[\begin{tikzcd}
	&& {\textrm{Hecke}_x^G} \\
	{\textrm{Bun}_G} &&&& {\textrm{Bun}_G}
	\arrow["{\overleftarrow{h}}"', from=1-3, to=2-1]
	\arrow["{\overrightarrow{h}}", from=1-3, to=2-5]
\end{tikzcd}\]
where $\overleftarrow{h}$ remembers $\mathcal{P}_1$ and $\overrightarrow{h}$ remembers $\mathcal{P}_2$, respectively.

The Hecke stack forms a groupoid over $\textrm{Bun}_G$, and we may consider the category $D(\textrm{Bun}_G)^{\textrm{Hecke}_x^G}$ of $\textrm{Hecke}_x^G$-equivariant D-modules on $\textrm{Bun}_G$. By definition, $D(\textrm{Bun}_G)^{\textrm{Hecke}_x^G}$ is given by the totalization of the cosimplicial category
\[\begin{tikzcd}
	{D(\textrm{Bun}_G)} && {D(\textrm{Hecke}_x^G)} && {\cdot \cdot \cdot}
	\arrow[shift left=1, from=1-1, to=1-3]
	\arrow[shift right=1, from=1-1, to=1-3]
	\arrow[shift left=1, from=1-3, to=1-5]
	\arrow[shift right=1, from=1-3, to=1-5]
	\arrow[from=1-3, to=1-5]
\end{tikzcd}\]

By construction, we have a forgetful functor $D(\textrm{Bun}_G)^{\textrm{Hecke}_x^G}\xrightarrow{\textrm{oblv}^{\textrm{Hecke}_x^G}} D(\textrm{Bun}_G)$. Since the maps $\overleftarrow{h},\overrightarrow{h}$ are ind-proper, we get \cite[Prop 4.4.5.]{gaitsgory2013contractibility}: 

\begin{prop}\label{p:G}
The functor $\on{oblv}^{\on{Hecke}_x^G}$ admits a left adjoint $\on{ind}^{\on{Hecke}_x^G},$ and the monad $\on{oblv}^{\on{Hecke}_x^G}\circ \on{ind}^{\textrm{Hecke}_x^G}$ is isomorphic to $\overleftarrow{h}_!\circ \overrightarrow{h}^!$.
\end{prop}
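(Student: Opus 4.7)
The plan is to invoke the general Beck-Chevalley machinery for groupoid actions, which in this setting is exactly the content of the cited result \cite[Prop.~4.4.5]{gaitsgory2013contractibility}. Recall the setup: $D(\textrm{Bun}_G)^{\textrm{Hecke}_x^G}$ is defined as the totalization of a cosimplicial category obtained by applying $D(-)$ with $!$-pullback functoriality to the simplicial nerve of the Hecke groupoid, and $\on{oblv}^{\textrm{Hecke}_x^G}$ is evaluation at degree zero. To produce a left adjoint, I would verify that the cosimplicial diagram satisfies the Beck-Chevalley condition, so that the totalization coincides with the corresponding geometric realization formed using left adjoints of the coface maps.

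The hypothesis to check is that every coface map admits a continuous left adjoint and that these left adjoints satisfy base change in every structural square. Every structure map of the nerve of the Hecke groupoid is a composition of base-changes of the projections $\overleftarrow{h}$ and $\overrightarrow{h}$, both of which are ind-proper. Consequently, $!$-pullback along each such map admits a continuous left adjoint given by de Rham pushforward (using $f_! \simeq f_{*,\textrm{dR}}$ for $f$ ind-proper), and ind-proper base change for D-modules furnishes the required Beck-Chevalley condition in each square. The general formalism then supplies a continuous left adjoint $\on{ind}^{\textrm{Hecke}_x^G}$ to $\on{oblv}^{\textrm{Hecke}_x^G}$, computed as a geometric realization of the simplicial diagram of left adjoints.

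The identification of the monad is then essentially bookkeeping. Under the Beck-Chevalley bar/cobar interchange, the composite $\on{oblv}^{\textrm{Hecke}_x^G}\circ \on{ind}^{\textrm{Hecke}_x^G}$ is given by the degree-one contribution in the simplicial resolution, i.e.\ by the left adjoint along one projection of $\textrm{Hecke}_x^G \rightrightarrows \textrm{Bun}_G$ followed by $!$-pullback along the other; this is precisely $\overleftarrow{h}_! \circ \overrightarrow{h}^!$ with the conventions in the statement. The main (and really only) technical input is ind-proper base change; everything else is formal, so I expect no serious obstacle beyond unwinding the definitions.
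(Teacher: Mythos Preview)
Your proposal is correct and matches the paper's approach exactly: the paper does not give a proof at all but simply cites \cite[Prop.~4.4.5]{gaitsgory2013contractibility}, noting just before the statement that $\overleftarrow{h}$ and $\overrightarrow{h}$ are ind-proper so that the hypotheses of that proposition are met. Your write-up is essentially an unpacking of what that cited proposition says and why ind-properness is the relevant input, so there is nothing to add.
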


\subsubsection{} 

Observe that we have an isomorphism of endofunctors
\[
\overleftarrow{h}_!\circ \overrightarrow{h}^!\simeq \omega_{\textrm{Sph}_G}\star -:D(\textrm{Bun}_G)\rightarrow D(\textrm{Bun}_G).
\] 

\noindent From Proposition \ref{p:key}, we obtain:
\begin{cor}\label{c:hecke}

The category $D(\on{Bun}_G)^{\on{max-anti-temp}}$ coincides with the category of $\on{Hecke}_x^G$-monodromic objects of $D(\on{Bun}_G)$ (i.e. the category generated under colimits by the essential image of the forgetful functor $D(\on{Bun}_G)^{\on{Hecke}_x^G}\xrightarrow{\on{oblv}^{\on{Hecke}_x^G}} D(\on{Bun}_G))$.

\end{cor}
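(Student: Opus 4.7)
Both sides are full, colimit-closed subcategories of $D(\Bun_G)$: the Hecke-monodromic category by construction, and $D(\Bun_G)^{\on{max-anti-temp}}$ because it is the kernel of the continuous endofunctor $\mathbbm{1}_{\on{Sph}_G}^{\on{quasi-temp}}\star -$ on $D(\Bun_G)$. It therefore suffices to match their colimit-generating families, and the natural candidate in both cases is the essential image of $\omega_{\on{Sph}_G}\star -$.

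For the inclusion $D(\Bun_G)^{\on{max-anti-temp}}\subseteq$ (Hecke-monodromic), Proposition \ref{p:key} applied to $\mathcal{C} = D(\Bun_G)$ (exactly as in the discussion preceding Corollary \ref{c:mon}) shows that the left-hand side is generated under colimits by the essential image of $\omega_{\on{Sph}_G}\star -$. The identification $\omega_{\on{Sph}_G}\star - \simeq \overleftarrow{h}_!\circ\overrightarrow{h}^!$ combined with Proposition \ref{p:G} factors this endofunctor as $\on{oblv}^{\on{Hecke}_x^G}\circ\on{ind}^{\on{Hecke}_x^G}$, so every generator $\omega_{\on{Sph}_G}\star\mathcal{F}$ manifestly lies in the essential image of $\on{oblv}^{\on{Hecke}_x^G}$, which establishes this direction.

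For the reverse inclusion, it suffices to show that $\on{oblv}^{\on{Hecke}_x^G}(\mathcal{G}) \in D(\Bun_G)^{\on{max-anti-temp}}$ for every $\mathcal{G} \in D(\Bun_G)^{\on{Hecke}_x^G}$. The plan is to apply Barr--Beck monadicity to the adjunction $(\on{ind}^{\on{Hecke}_x^G},\on{oblv}^{\on{Hecke}_x^G})$, whose monad is $T := \omega_{\on{Sph}_G}\star -$. This writes $\mathcal{G}$ as the geometric realization of its canonical bar resolution, and applying the continuous functor $\on{oblv}^{\on{Hecke}_x^G}$ yields an equivalence $\on{oblv}^{\on{Hecke}_x^G}(\mathcal{G})\simeq |T^{\bullet+1}(\on{oblv}^{\on{Hecke}_x^G}(\mathcal{G}))|$. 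Each simplicial term is in the essential image of $T$, which by the previous paragraph is contained in $D(\Bun_G)^{\on{max-anti-temp}}$; the colimit-closedness of this subcategory concludes.

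The main technical point to check is the Barr--Beck hypothesis for $\on{oblv}^{\on{Hecke}_x^G}$. Conservativity is automatic since $D(\Bun_G)^{\on{Hecke}_x^G}$ is defined as a totalization whose zeroth term is $D(\Bun_G)$, and continuity of $\on{oblv}^{\on{Hecke}_x^G}$ follows from continuity of the monad $T = \omega_{\on{Sph}_G}\star -$, which ensures colimits in $T$-modules are computed at the level of the underlying category. With these standard categorical checks in place, both inclusions follow as sketched.
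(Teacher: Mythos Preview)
Your argument is correct and follows the same route as the paper: identify $\omega_{\on{Sph}_G}\star - \simeq \on{oblv}^{\on{Hecke}_x^G}\circ\on{ind}^{\on{Hecke}_x^G}$ via Proposition~\ref{p:G} and the observation preceding the corollary, then combine with Proposition~\ref{p:key}. The paper records this as an immediate consequence and leaves the reverse inclusion (that every $\on{oblv}^{\on{Hecke}_x^G}(\mathcal G)$ lies in the colimit closure of the image of $\omega_{\on{Sph}_G}\star -$) implicit; your bar-resolution argument makes it explicit, which is fine.

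One small clarification on the Barr--Beck step: you do not need to first establish continuity of $\on{oblv}^{\on{Hecke}_x^G}$ to run the argument. The hypothesis that $\on{oblv}$ preserves $\on{oblv}$-split geometric realizations is tautological, so Barr--Beck applies once conservativity is checked. The bar resolution of any $\mathcal G$ is itself $\on{oblv}$-split, hence $\on{oblv}$ preserves this particular colimit automatically, and the conclusion follows. Your detour through ``continuity of the monad implies continuity of $\on{oblv}$'' is valid but only after monadicity is in hand, so as written it reads slightly circularly; phrasing it via $\on{oblv}$-splitness avoids that. Alternatively, one can simply note that $\on{ind}^{\on{Hecke}_x^G}$ generates the target under colimits (its right adjoint being conservative), so the colimit closure of the image of $\on{oblv}^{\on{Hecke}_x^G}$ agrees with that of $\on{oblv}^{\on{Hecke}_x^G}\circ\on{ind}^{\on{Hecke}_x^G}$.
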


\subsubsection{} We we will now see that maximal anti-temperedness plays well under functoriality. To distinguish the spherical actions corresponding to different groups, we write $\textrm{quasi-G-temp}$ when referring to quasi-temperedness on $D(\textrm{Bun}_G)$ instead of $\textrm{quasi-temp}$ etc.

\begin{prop}\label{p:funct}

(a) Let $G\rightarrow H$ be a map of reductive algebraic groups, and suppose $\cC, \cD$ are DG categories equipped with an action of $H(K), G(K)$, respectively. Considering the induced action of $G(K)$ on $\cC$, let $F: \cC\to \cD$ be a $G(K)$-equivariant functor. Then $F$ restricts to a functor  $$F:\cC^{H(O),\on{max-anti-H-temp}}\to \cD^{G(O),\on{max-anti-G-temp}}$$
(b) Assume moreover that $\cC$ and $\cD$ are dualizable as DG-categories. Considering $\cC\otimes \cD$ as a $H(K)\times G(K)$-category in the obvious way, we have a canonical equivalence $$(\cC\otimes \cD)^{H(O)\times G(O),\on{max-anti-G}\times \on{H-temp}}\simeq \cC^{H(O),\on{max-anti-G-temp}}\otimes \cD^{G(O),\on{max-anti-H-temp}}$$
as subcategories of $\cC^{H(O)}\otimes \cD^{G(O)}$.

\end{prop}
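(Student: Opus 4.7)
The plan for (a) is to apply the monodromic characterization of maximal anti-temperedness from Corollary \ref{c:mon}. Since $F$ and all relevant forgetful functors preserve colimits, it suffices to verify the assertion on generators. By Corollary \ref{c:mon}, $\cC^{H(O),\on{max-anti-H-temp}}$ is generated under colimits by objects of the form $c=\Oblv^{H(K)\to H(O)}(c')$ with $c'\in \cC^{H(K)}$. The key observation is that through the map $G\to H$, every $H(K)$-invariant object of $\cC$ is automatically $G(K)$-invariant, so $c'$ descends to an object of $\cC^{G(K)}$, and the image of $c$ under the restriction of invariance $\cC^{H(O)}\to \cC^{G(O)}$ coincides with $\Oblv^{G(K)\to G(O)}(c')$. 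Using $G(K)$-equivariance, $F$ descends to $F^{G(K)}:\cC^{G(K)}\to \cD^{G(K)}$, and
\[
F(c)\simeq \Oblv^{G(K)\to G(O)}(F^{G(K)}(c'))
\]
lies in the essential image of $\Oblv^{G(K)\to G(O)}$. Invoking Corollary \ref{c:mon} again identifies the subcategory generated by such objects with $\cD^{G(O),\on{max-anti-G-temp}}$, completing (a).

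For (b), the plan is to show that both subcategories are generated under filtered colimits by the same family of objects, using Proposition \ref{p:key}. First I would verify the monoidal equivalence $\on{Sph}_{G\times H}\simeq \on{Sph}_G\otimes \on{Sph}_H$, which holds because each spherical category is dualizable, and observe that under this identification the dualizing sheaves factor as $\omega_{\on{Sph}_{G\times H}}\simeq \omega_{\on{Sph}_G}\boxtimes \omega_{\on{Sph}_H}$. Using dualizability of $\cC$ and $\cD$, one has $(\cC\otimes \cD)^{H(O)\times G(O)}\simeq \cC^{H(O)}\otimes \cD^{G(O)}$, and the $\on{Sph}_{G\times H}$-action factors as the external tensor product of the individual actions, yielding
\[
\omega_{\on{Sph}_{G\times H}}\star (c\otimes d)\simeq (\omega_{\on{Sph}_H}\star c)\otimes (\omega_{\on{Sph}_G}\star d)
\]
for $c\in \cC^{H(O)}$, $d\in \cD^{G(O)}$. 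By Proposition \ref{p:key} applied to $G\times H$, the left-hand side of the claimed equivalence is generated under filtered colimits by such objects; by Proposition \ref{p:key} applied separately to each factor, so is the right-hand side. Hence the two subcategories of $\cC^{H(O)}\otimes \cD^{G(O)}$ coincide.

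The step I expect to be the main obstacle is the K\"unneth-type factorization $\omega_{\on{Sph}_{G\times H}}\simeq \omega_{\on{Sph}_G}\boxtimes \omega_{\on{Sph}_H}$ together with the compatibility of the spherical action with the tensor decomposition of the invariant category. These are formal consequences of dualizability and of standard properties of dualizing sheaves on product stacks, but they need to be stated cleanly; in particular one must know that the convolution product on $\on{Sph}_{G\times H}$ identifies with the exterior convolution under the dualizable tensor decomposition. Once these identifications are in place, both parts of the proposition reduce directly to Proposition \ref{p:key} and Corollary \ref{c:mon}.
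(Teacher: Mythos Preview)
Your proof is correct and follows essentially the same approach as the paper. For (a), both you and the paper factor the composite $\cC^{H(K)}\to\cC\xrightarrow{F}\cD$ through $\cC^{G(K)}\xrightarrow{F^{G(K)}}\cD^{G(K)}\xrightarrow{\Oblv}\cD$ and invoke Corollary~\ref{c:mon}; for (b), both reduce to the identification of the endofunctor $\omega_{\on{Sph}_{G\times H}}\star(-)$ with $(\omega_{\on{Sph}_H}\star -)\otimes(\omega_{\on{Sph}_G}\star -)$ under the tensor decomposition of invariants, and then appeal to Proposition~\ref{p:key}.
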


\begin{proof}

Note that we may factor the functor 
$$\cC^{H(K)}\xrightarrow{\on{oblv}} \cC\xrightarrow{F} \cD$$
as the composition
$$\cC^{H(K)}\to \cC^{G(K)}\xrightarrow{F} \cD^{G(K)}\xrightarrow{\on{oblv}}\cD.$$

\noindent Assertion (a) now follows from Corollary \ref{c:mon}.

To prove (b), observe that under the equivalence $(\cC\otimes \cD)^{H(O)\times G(O)}\simeq \cC^{H(O)}\otimes \cD^{G(O)}$, the functor $$\omega_{\textrm{Sph}_{G\times H}}\star -: (\cC\otimes \cD)^{H(O)\times G(O)}\rightarrow (\cC\otimes \cD)^{H(O)\times G(O)}$$ coincides with the functor $$(\omega_{\textrm{Sph}_G}\star -)\otimes (\omega_{\textrm{Sph}_H}\star -): \cC^{H(O)}\otimes \cD^{G(O)}\rightarrow \cC^{H(O)}\otimes \cD^{G(O)}.$$

\noindent We are now done by Proposition \ref{p:key}.
\end{proof}

\begin{rem}\label{r:product}

A similar assertion to Proposition \ref{p:funct} (b) holds if we replace maximal anti-temperedness by usual temperedness (essentially by definition).

Another way to see both assertions is to use the Koszul dual description of $\textrm{Sph}_{G\times H}$. Namely, observing that a pair $(x,y)\in \mathfrak{\check{g}}\times \mathfrak{\check{h}}$ is zero (resp. regular) if and only if both $x,y$ are zero (resp. regular), it follows that $$\textrm{IndCoh}\big((\Omega(\mathfrak{\check{g}}\times\mathfrak{\check{h}})/\check{G}\times\check{H})^{\wedge}_{0/\check{G}\times\check{H}}\big)^{\Rightarrow}\simeq \textrm{IndCoh}\big((\Omega\mathfrak{\check{g}}/\check{G})^{\wedge}_{0/\check{G}}\big)^{\Rightarrow}\otimes \textrm{IndCoh}\big((\Omega\mathfrak{\check{h}}/\check{H})^{\wedge}_{0/\check{H}}\big)^{\Rightarrow}$$
and
$$\textrm{IndCoh}\big((\Omega(\mathfrak{\check{g}}\times\mathfrak{\check{h}})/\check{G}\times\check{H})^{\wedge}_{\mathcal{N}_{G\times H,\textrm{reg}}/\check{G}\times\check{H}}\big)^{\Rightarrow} \simeq \textrm{IndCoh}\big((\Omega\mathfrak{\check{g}}/\check{G})^{\wedge}_{\mathcal{N}_{G,\textrm{reg}}/\check{G}}\big)^{\Rightarrow}\otimes \textrm{IndCoh}\big((\Omega\mathfrak{\check{h}}/\check{H})^{\wedge}_{\mathcal{N}_{H,\textrm{reg}}/\check{H}}\big)^{\Rightarrow}.$$

\noindent This covers the universal case from which the assertions follow.

Let us note one more observation for later use. Namely, if $H=T$ is a torus in the formulation of Proposition \ref{p:funct} (b), then the irregular elements of $\check{\ft}\times \check{\fg}$ are exactly the elements $(x,y)$ where $y\in \check{\fg}$ is irregular. It follows that $$(\cC\otimes \cD)^{T(O)\times G(O), \on{quasi-T}\times \on{G-temp}}\simeq \cC^{T(O)}\otimes \cD^{G(O), \on{quasi-G-temp}}.$$

\end{rem}
\subsubsection{} When proving Theorem A, we will do so by reducing to the case where our group $G$ is semisimple and simply-connected. Thus, we will often consider functoriality induced by central isogenies. The following lemma is key in this regard:
\begin{lem}\label{l:key}

Let $G\rightarrow H$ be a central isogeny of reductive groups. Consider the induced maps $f: \on{Bun}_G\rightarrow \on{Bun}_H$, $f_{\on{Sph}}: \on{Sph}_G\rightarrow \on{Sph}_H$. Let $\mathcal{F}\in D(\on{Bun}_H)$, $\mathcal{H}\in \on{Sph}_H$. Then $f_{\on{Sph}}^!(\mathcal{H})\overset{\on{Sph}_G}{\star} f^!(\mathcal{F})$ occurs as a direct summand of $f^!(\mathcal{H}\overset{\on{Sph}_H}{\star}\mathcal{F})$. In particular, if $\mathcal{F}$ is quasi-tempered, then so is $f^!(\mathcal{F})$.

\end{lem}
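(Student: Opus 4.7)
The plan is to pull back the $H$-convolution along $f$, apply proper base change along $\overleftarrow{h}_H$, and then identify $\on{Hecke}_x^G$ as an open-and-closed piece of the resulting fiber product, so that the $G$-convolution appears as a direct summand.

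First I would unwind the definitions. Writing $\pi_G : \on{Hecke}_x^G \to G(O)\backslash \on{Gr}_G$ for the projection to the local Hecke stack (so that $\on{Sph}_G$ acts through $\pi_G$), the convolution reads
\[
\mathcal{H} \overset{\on{Sph}_H}{\star} \mathcal{F} = \overleftarrow{h}_{H,!}(\overrightarrow{h}_H, \pi_H)^!(\mathcal{F} \boxtimes \mathcal{H}),
\]
and similarly for $G$. The isogeny induces a morphism $f_{\on{Hecke}} : \on{Hecke}_x^G \to \on{Hecke}_x^H$ satisfying the compatibilities $\overrightarrow{h}_H \circ f_{\on{Hecke}} = f \circ \overrightarrow{h}_G$ and $\pi_H \circ f_{\on{Hecke}} = f_{\on{Sph}} \circ \pi_G$.

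Next, since $\overleftarrow{h}_H$ is ind-proper, proper base change applied to the Cartesian square
\[
\begin{tikzcd}
\on{Hecke}_x^H \times_{\on{Bun}_H} \on{Bun}_G \arrow[r, "b"] \arrow[d, "q"'] & \on{Hecke}_x^H \arrow[d, "\overleftarrow{h}_H"] \\
\on{Bun}_G \arrow[r, "f"'] & \on{Bun}_H
\end{tikzcd}
\]
yields $f^!(\mathcal{H} \overset{\on{Sph}_H}{\star} \mathcal{F}) = q_! b^! (\overrightarrow{h}_H, \pi_H)^!(\mathcal{F} \boxtimes \mathcal{H})$. The key geometric input is that the natural map $a : \on{Hecke}_x^G \to \on{Hecke}_x^H \times_{\on{Bun}_H} \on{Bun}_G$ is an open-and-closed embedding: its fibers over $\on{Bun}_G$ recover the map $\on{Gr}_G \hookrightarrow \on{Gr}_H$ induced by the isogeny, which is a closed embedding identifying $\on{Gr}_G$ with the union of connected components of $\on{Gr}_H$ indexed by $\pi_1(G) \subseteq \pi_1(H)$.

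The resulting decomposition $\on{Hecke}_x^H \times_{\on{Bun}_H} \on{Bun}_G = \on{Hecke}_x^G \sqcup \cZ$ splits $q_!$ as a direct sum of two functors; using $q \circ a = \overleftarrow{h}_G$ and $b \circ a = f_{\on{Hecke}}$, the summand corresponding to $\on{Hecke}_x^G$ evaluates to exactly $f_{\on{Sph}}^!(\mathcal{H}) \overset{\on{Sph}_G}{\star} f^!(\mathcal{F})$, proving the first claim. For the \emph{in particular} statement, take $\mathcal{H} = \omega_{\on{Sph}_H}$: then $f_{\on{Sph}}^!(\mathcal{H}) = \omega_{\on{Sph}_G}$ by compatibility of $!$-pullback with dualizing sheaves, while the hypothesis on $\mathcal{F}$ combined with Remark \ref{r:kernel} gives $\mathcal{H} \overset{\on{Sph}_H}{\star} \mathcal{F} = 0$; hence $\omega_{\on{Sph}_G} \overset{\on{Sph}_G}{\star} f^!(\mathcal{F})$ is a direct summand of $0$, and Remark \ref{r:kernel} again shows $f^!(\mathcal{F})$ is quasi-tempered. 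The main obstacle is the open-and-closed claim for $a$, which is precisely where the centrality of the kernel is used: it ensures that on the affine Grassmannian one is simply picking out connected components.
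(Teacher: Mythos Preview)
Your proof is correct and follows essentially the same route as the paper: unwind the convolution, apply proper base change along the ind-proper projection of the $H$-Hecke stack, and identify $\on{Hecke}_x^G$ as an open-and-closed substack of the resulting fiber product via the inclusion $\on{Gr}_G \hookrightarrow \on{Gr}_H$ of connected components. The paper makes this last identification explicit by rewriting the fiber product as $\on{Gr}_{H,x} \overset{G(O_x)}{\times} \on{Bun}_G^{\infty\cdot x}$, but your fiberwise description amounts to the same thing; your deduction of the ``in particular'' via $\mathcal{H}=\omega_{\on{Sph}_H}$ and Remark~\ref{r:kernel} is exactly what the paper has in mind.
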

\begin{proof}
Consider the commutative diagram
\[\begin{tikzcd}
	&& {\textrm{Hecke}_x^G} \\
	&& {\textrm{Bun}_H\underset{\textrm{Bun}_H(X-x)}{\times}\textrm{Bun}_G} \\
	{\textrm{Bun}_G} &&&& {\textrm{Bun}_G} \\
	&& {\textrm{Hecke}_x^H} \\
	{\textrm{Bun}_H} &&&& {\textrm{Bun}_H} \\
	&& {H(O_x)\backslash H(K_x)/H(O_x)}
	\arrow["f"', from=3-1, to=5-1]
	\arrow["{\overleftarrow{h}_H}"', from=4-3, to=5-1]
	\arrow["{\overrightarrow{h}_H}", from=4-3, to=5-5]
	\arrow["f", from=3-5, to=5-5]
	\arrow["{\overleftarrow{h}_G}"', bend right = 20, from=1-3, to=3-1]
	\arrow["{p_r}", from=2-3, to=3-5]
	\arrow["{p_l}", from=2-3, to=4-3]
	\arrow["j", from=1-3, to=2-3]
	\arrow["{\overrightarrow{h}_G}", bend left = 20, from=1-3, to=3-5]
	\arrow["{q_H}", from=4-3, to=6-3]
\end{tikzcd}\]
where the right square is Cartesian. By definition, we have $$f^!(\mathcal{H}\overset{\textrm{Sph}_H}{\star}\mathcal{F})=f^!\circ (\overrightarrow{h}_H)_!(\overleftarrow{h}_H^!(\mathcal{F})\overset{!}{\otimes} q_H^!(\mathcal{H}))$$ which by base change becomes\footnote{Note that we may apply base change here because the maps $\overleftarrow{h}_H, \overrightarrow{h}_H$ are ind-proper.} 
\begin{equation}\label{eq:bigshf}
(p_r)_!\circ p_l^!(\overleftarrow{h}_H^!(\mathcal{F})\overset{!}{\otimes} q_H^!(\mathcal{H}))\simeq (p_r)_!\big((\overleftarrow{h}_H\circ p_l)^!(\mathcal{F})\overset{!}{\otimes} (q_H\circ p_l)^!(\mathcal{H})\big).
\end{equation}

The stack $\textrm{Bun}_H\underset{{\textrm{Bun}_H(X-x)}}{\times}\textrm{Bun}_G$ parametrizes triples $(\mathcal{P}_H,\mathcal{P}_G,\tau)$ where $\mathcal{P}_H$ is an $H$-bundle on $X$, $\mathcal{P}$ is a $G$-bundle, and $\tau$ is an isomorphism $\mathcal{P}_H\vert_{{X-x}}\xrightarrow{\simeq}\mathcal{P}_G\overset{G}{\times} H\vert_{X-x}$. We may write this stack as $$\textrm{Bun}_H\underset{{\textrm{Bun}_H(X-x)}}{\times}\textrm{Bun}_G\simeq \textrm{Gr}_{H,x}\overset{G(O_x)}{\times}\textrm{Bun}_G^{\infty\cdot x}$$ where $\textrm{Bun}_G^{\infty\cdot x}$ parametrizes $G$-bundles on $X$ equipped with a trivialization on the formal disk around $x$. Similarly, we may write $$\textrm{Hecke}_x^G\simeq \textrm{Gr}_{G,x}\overset{G(O_x)}{\times}\textrm{Bun}_G^{\infty\cdot x}.$$

The map of affine grassmannians $\textrm{Gr}_{G,x}\rightarrow \textrm{Gr}_{H,x}$ realizes $\textrm{Gr}_{G,x}$ as a union of connected components of $\textrm{Gr}_{H,x}$. Thus, so is the case for the map $$j: \textrm{Gr}_{G,x}\overset{G(O_x)}{\times}\textrm{Bun}_G^{\infty\cdot x}\rightarrow \textrm{Gr}_{H,x}\overset{G(O_x)}{\times}\textrm{Bun}_G^{\infty\cdot x}.$$

\noindent As a consequence, the sheaf
\begin{equation}\label{eq:smallshf}
(p_r)_!\circ j_!\circ j^!\big((\overleftarrow{h}_H\circ p_l)^!(\mathcal{F})\overset{!}{\otimes} (q_H\circ p_l)^!(\mathcal{H})\big)
\end{equation}
is a direct summand of the sheaf (\ref{eq:bigshf}). However, the sheaf (\ref{eq:smallshf}) may be rewritten as 
\[
f_{\textrm{Sph}}^!(\mathcal{H})\overset{\textrm{Sph}_G}{\star}
f^!(\mathcal{F}).
\]

\noindent This proves the first assertion of the lemma. It is now clear that $f^!$ maps $D(\textrm{Bun}_H)^{\textrm{quasi-H-temp}}$ to $D(\textrm{Bun}_G)^{\textrm{quasi-G-temp}}$. Indeed, this follows from Remark \ref{r:kernel}.
\end{proof}

\subsubsection{}

Let us end this section by proving one of the inclusions in Theorem A. Namely, that maximally anti-tempered D-modules have singular support contained in $(T^*\textrm{Bun}_G)_{\mathfrak{z}(\mathfrak{g})}$.

\begin{thm}\label{t:A1}

For any reductive group $G$, we have an inclusion $$D(\on{Bun}_G)^{\on{max-anti-temp}}\subset D_{\mathfrak{z}(\mathfrak{g})}(\on{Bun}_G).$$
\end{thm}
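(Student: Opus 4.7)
By Corollary \ref{c:hecke}, the category $D(\on{Bun}_G)^{\on{max-anti-temp}}$ is generated under colimits by the essential image of the forgetful functor $\on{oblv}^{\on{Hecke}_x^G}$, or equivalently (by Proposition \ref{p:G}) by the essential image of $\omega_{\on{Sph}_G}\star - = \overleftarrow{h}_!\circ \overrightarrow{h}^!$. Since the category $D_{\mathfrak{z}(\mathfrak{g})}(\on{Bun}_G)$ is closed under colimits inside $D(\on{Bun}_G)$ (it is a full subcategory cut out by a singular-support condition, and $D_{\mathfrak{z}(\mathfrak{g})}(\on{Bun}_G)$ is left-complete), it is enough to show that for every $\cF \in D(\on{Bun}_G)$ one has $\omega_{\on{Sph}_G}\star \cF \in D_{\mathfrak{z}(\mathfrak{g})}(\on{Bun}_G)$.

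To analyze the singular support of $\overleftarrow{h}_!\circ \overrightarrow{h}^!(\cF)$, I would stratify $\on{Hecke}_x^G$ by the Schubert substacks $\on{Hecke}_x^{G,\leq\lambda}$, each proper (after Bott--Samelson resolution) over $\on{Bun}_G$ via both $\overleftarrow{h}$ and $\overrightarrow{h}$. On each stratum, Lemma \ref{l:!} controls the singular support of $(\overrightarrow{h}^\lambda)^!(\cF)$ as the image of the codifferential $d(\overrightarrow{h}^\lambda)^\bullet$, and Lemma \ref{l:*} then controls the singular support of $(\overleftarrow{h}^\lambda)_!$ of this object as the $\overleftarrow{h}$-projection of the preimage under $d(\overleftarrow{h}^\lambda)^\bullet$. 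Taking the colimit over $\lambda$ and using left-completeness of $D_{\mathfrak{z}(\mathfrak{g})}(\on{Bun}_G)$, the upshot is that $SS(\omega_{\on{Sph}_G}\star \cF)$ at a point $\cP_1\in\on{Bun}_G$ is contained in the set of Higgs fields $\phi_1$ on $\cP_1$ that admit, for arbitrarily large $\lambda$, a modification $(\cP_1,\cP_2,\tau)\in\on{Hecke}_x^{G,\leq\lambda}$ and a cotangent vector on $\on{Hecke}_x^{G,\leq\lambda}$ lying simultaneously in the images of $d\overleftarrow{h}^\bullet$ and $d\overrightarrow{h}^\bullet$ over $\phi_1$.

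The main obstacle is the local computation at $x$. The content of it is that, as $\lambda$ grows, the compatibility between a Higgs field $\phi_1$ on $\cP_1$ and some Higgs field $\phi_2$ on a $\lambda$-modified bundle $\cP_2$ becomes increasingly restrictive, forcing the Taylor expansion of $\phi_1$ at $x$ to commute with an arbitrary element of $G(K_x)/G(O_x)$, hence to take values in $\mathfrak{z}(\mathfrak{g})$ at $x$ after local trivialization. Rather than carrying this out directly, I would streamline by reducing to the case of a product $G=T\times H$ with $T$ a torus and $H$ semisimple (even simply connected) via the central isogeny $Z(G)^\circ\times \widetilde{[G,G]}\to G$: Lemma \ref{l:key} together with the Sph-equivariance of $f^!$ lets us transport max-anti-temperedness along the isogeny, while Proposition \ref{p:funct}(b) and Remark \ref{r:product} reduce the product case to the two factors. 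For $T$ the inclusion $D(\on{Bun}_T)^{\on{max-anti-T-temp}}\subseteq D_{\mathfrak{t}}(\on{Bun}_T)=D(\on{Bun}_T)$ is vacuous, so the problem collapses to showing, for $H$ semisimple, that Hecke-monodromic objects have singular support on the zero section--which is the same local computation at $x$ but now with the simplifying feature $\mathfrak{z}(\mathfrak{h})=0$.

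Finally, whatever pointwise condition at $x$ the previous step produces, the independence of $x$ of the max-anti-tempered category (cf. the footnote after Corollary \ref{c:mon}) upgrades it to the same condition at every $x'\in X$. Using the $\on{Ad}$-invariant decomposition $\mathfrak{g}=\mathfrak{z}(\mathfrak{g})\oplus[\mathfrak{g},\mathfrak{g}]$, the $[\mathfrak{g},\mathfrak{g}]$-component of $\phi_1$ then vanishes on a Zariski-dense subset of the connected curve $X$, hence identically. Thus $\phi_1\in H^0(X,\mathfrak{z}(\mathfrak{g})_{\cP_1}\otimes \Omega^1_X)$, so $(\cP_1,\phi_1)\in (T^*\on{Bun}_G)_{\mathfrak{z}(\mathfrak{g})}$, completing the proof. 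The hardest part is the local centralizer computation at $x$; the reduction to the semisimple factor via the isogeny is meant to insulate this step from the abelian part, where the Hecke action is essentially discrete.
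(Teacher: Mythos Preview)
Your reduction from a general reductive $G$ to a product $T\times H$ with $H$ semisimple, via the isogeny $Z(G)^\circ\times[G,G]\to G$ together with Proposition~\ref{p:funct}, is exactly the paper's strategy. The divergence, and the gap, is entirely in the semisimple case.

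For $H$ semisimple you try to bound $SS(\omega_{\on{Sph}_H}\star\cF)$ by writing $\overleftarrow{h}_!\overrightarrow{h}^!$ as a colimit over Schubert strata and invoking Lemmas~\ref{l:!}--\ref{l:*} on each piece. The problem is that this colimit does not behave the way you claim: each finite Hecke functor preserves singular support (this is precisely what is recorded in \S2.2.6), so the $\lambda$-th truncation has singular support essentially equal to $SS(\cF)$, not a shrinking piece of it. Singular support of a filtered colimit is controlled by the closure of the \emph{union} of the termwise singular supports, not by an intersection; left-completeness of $D_{\fz(\fh)}(\on{Bun}_H)$ does not convert this into an intersection. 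So the assertion that a covector $\phi_1$ in $SS(\omega_{\on{Sph}_H}\star\cF)$ must be compatible with modifications ``for arbitrarily large $\lambda$'' is unjustified, and the ``local centralizer computation at $x$'' you defer is not just hard but sits inside a framework that does not deliver it. The independence-of-$x$ step at the end is then moot.

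The paper sidesteps the Hecke correspondence altogether. Instead of analyzing $\omega_{\on{Sph}_H}\star\cF$ on $\on{Bun}_H$, it pulls $\cF$ back along $\pi_H:\on{Gr}_{H,x}\to\on{Bun}_H$. Since $\pi_H^!$ is $\on{Sph}_H$-linear, $\pi_H^!(\cF)$ is still maximally anti-tempered, and Corollary~\ref{c:mon} applied to $\cC=D(H(K))$ says that on $\on{Gr}_{H,x}$ the max-anti-tempered objects are generated under colimits by $\omega_{\on{Gr}_{H,x}}$. Hence $g^!\pi_H^!(\cF)$ lies in $D_0(S')$ for any $S'\to\on{Gr}_{H,x}$. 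The Drinfeld--Simpson theorem then guarantees that every smooth chart $S\to\on{Bun}_H$ \'etale-locally factors through $\on{Gr}_{H,x}$, which is exactly what is needed to conclude $\cF\in D_0(\on{Bun}_H)$. The key idea you are missing is this passage to the Grassmannian, where max-anti-temperedness becomes the transparent statement ``generated by $\omega$''.
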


\begin{proof}
First, suppose $G$ is semisimple, and let $\cF\in D(\on{Bun}_G)^{\on{max-anti-temp}}$. We need to show that $\cF\in D_0(\on{Bun}_G)$. That is, we need to show that for every scheme $S$ mapping smoothly to $\on{Bun}_G$ via some $f: S\to \on{Bun}_G$, we have $f^!(\cF)\in D_0(S)$. Recall that the affine Grassmannian $\textrm{Gr}_{G,x}$ parametrizes pairs $(\mathcal{P},\tau)$ where $\mathcal{P}$ is a $G$-bundle on $X$ and $\tau$ is a trivialization of $\mathcal{P}$ on $X-x$.

We have a canonical map $$\pi_G: \textrm{Gr}_{G,x}\rightarrow \textrm{Bun}_G, \:\: (\mathcal{P},\tau)\mapsto \mathcal{P}.$$ Let $\mathcal{P}_f$ be the $G$-bundle on $X\times S$ comprising $f$. By \cite[Thm 3.]{drinfeld1995b}, there exists an étale cover $h: S'\rightarrow S$ such that the pullback of $\mathcal{P}_f$ to $X\times S'$ is trivializable on $(X-x)\times S'$. In particular, there exists a map $g: S'\rightarrow \textrm{Gr}_{G,x}$ making the diagram commute:
\[\begin{tikzcd}
	{S'} && {\textrm{Gr}_{G,x}} \\
	\\
	S && {\textrm{Bun}_G.}
	\arrow["f"', from=3-1, to=3-3]
	\arrow["{\pi_G}", from=1-3, to=3-3]
	\arrow["h"', from=1-1, to=3-1]
	\arrow["g", from=1-1, to=1-3]
\end{tikzcd}\]

\noindent Since $h$ is an étale cover, it suffices to show that $h^!\circ f^!(\cF)\simeq g^!\circ \pi_G^!(\cF)$ lands in $D_0(S')$. However, $\pi_G^!$ is $\on{Sph}_G$-linear, and so $\pi_G^!(\cF)$ is maximally anti-tempered. Hence it can be written as a colimit of $\omega_{\on{Gr}_{G,x}}$ by Corollary \ref{c:mon}. It follows immediately that $g^!\circ\pi_G^!(\cF)\in D_0(S')$.

Next, suppose $G\simeq T\times H$ is the product of a torus and a semisimple algebraic group $H$. By Proposition \ref{p:funct} (b) (taking $\cC=D(\on{Bun}_T^{\infty\cdot x}), \cD=D(\on{Bun}_H^{\infty\cdot x})$), we have
$$D(\on{Bun}_G)^{\on{max-anti-G-temp}}\simeq D(\on{Bun}_T)\otimes D(\on{Bun}_H)^{\on{max-anti-H-temp}},$$
which is clearly contained in $D_{\fz(\fg)}(\on{Bun}_G)$ by the above.

Finally, let $G$ be an arbitrary reductive group. Consider the central isogeny $Z(G)^{\circ}\times [G,G]\to G$ and the induced smooth map of moduli stacks $\alpha_G: \on{Bun}_{Z(G)^{\circ}\times [G,G]}\to \on{Bun}_G$. Note that $\alpha_G$ surjects onto a union of connected components of $\on{Bun}_G$ (see e.g \cite{hoffmann2010moduli}).
Let $\cF\in D(\on{Bun}_G)^{\on{max-anti-temp}}$. We may assume that $\cF$ is supported on a single connected component of $\on{Bun}_G$. Now, choose a dominant coweight $\check{\lambda}$ of $G$ such that when we act by the corresponding Hecke functor, $V^{\check{\lambda}}$, the sheaf $V^{\check{\lambda}}\star \cF$ is supported on a single connected component in the image of $\alpha_G$.\footnote{This may easily be done: We have $\pi_0(\on{Bun}_G)=\pi_1(G)=X_*(T)/X_{\on{coroots}}(T)$, and Hecke functors $V^{\check{\lambda}}: D(\on{Bun}_G)\to D(\on{Bun}_G)$ permute the connected components compatibly with the abelian group structure on $X_*(T)/X_{\on{coroots}}(T)$.} By Proposition \ref{p:funct} (a), $\alpha_G^!(V^{\check{\lambda}}\star \cF)$ lies in $D_{\fz(\fg)}(\on{Bun}_{Z(G)^{\circ}\times [G,G]})$. It follows that $V^{\check{\lambda}}\star \cF\in D_{\fz(\fg)}(\on{Bun}_G)$.

At last, we note that $\cF$ occurs as a direct summand of $V^{-w_0(\check{\lambda})}\star (V^{\check{\lambda}}\star \cF)\simeq (V^{-w_0(\check{\lambda})}\star V^{\check{\lambda}})\star \cF$. Since Hecke functors preserve singular support (see \S 2.2.6), this sheaf lies in $D_{\fz(\fg)}(\on{Bun}_G)$.

\end{proof}

\section{Proofs} In this section we finish the proof of Theorem A and then deduce Theorem B as a consequence. We will show that the inclusion $$ D_{\mathfrak{z}(\mathfrak{g})}(\textrm{Bun}_G)\subset D(\textrm{Bun}_G)^{\textrm{max-anti-temp}}$$ for simply-connected semisimple groups and then extract the general case using the formalism in $\S 3$ above. Thus we begin this section by studying the category $D_{\mathfrak{z}(\mathfrak{g})}(\textrm{Bun}_G)=D_0(\textrm{Bun}_G)$ when $G$ is semisimple and simply-connected.

\subsection{Local systems on $\textrm{Bun}_G$ when $G$ is simply-connected}

The goal of this subsection is to prove Proposition \ref{p:cst} below stating that any irreducible local system on $\textrm{Bun}_G$ is trivial whenever $G$ is semisimple and simply-connected. We will prove this by reducing to the case when our ground field $k$ is the complex numbers via standard Noetherian approximation arguments.

\subsubsection{Constructibility} 

Recall that for a smooth algebraic stack $\mathcal{Y}$, $D(\mathcal{Y})^{\textrm{coh}}$ denotes the small category of \emph{locally coherent} D-modules on $\mathcal{Y}$. That is, $D(\mathcal{Y})^{\textrm{coh}}$ consists of those $\mathcal{F}\in D(\mathcal{Y})$ such that for any smooth map from a finite type affine scheme $S\rightarrow \mathcal{Y}$, $f^!(\mathcal{F})\in D(S)$ is a bounded complex with coherent cohomologies.

\subsubsection{Base-change under field extensions}
Let $\tilde{k}\subset k$ be a sub field, and let $\mathcal{\tilde{Y}}$ be a smooth algebraic stack over $\tilde{k}$ such that $D(\mathcal{\tilde{Y}})$ is dualizable. Denote by $\mathcal{Y}$ the base-change of $\tilde{\mathcal{Y}}$ to $k$. The canonical functor $$D(\mathcal{\tilde{Y}})\underset{\textrm{Vect}_{\tilde{k}}}{\otimes}\textrm{Vect}_k\rightarrow D(\mathcal{Y})$$ is an equivalence. However, this is no longer true when one imposes singular support conditions. For example, we have a canonical functor $$D_0(\mathcal{\tilde{Y}})\underset{\textrm{Vect}_{\tilde{k}}}{\otimes}\textrm{Vect}_k\rightarrow D_0(\mathcal{Y}).$$ 

\noindent This functor is fully faithful,\footnote{At least whenever $D_0(\mathcal{\tilde{Y}})$ is dualizable, which will always be the case for us.} but need not be essentially surjective. That is, not every $k$-local system on $\mathcal{Y}$ is induced from a $\tilde{k}$-local system on $\mathcal{\tilde{Y}}$.

\subsubsection{} The following approximation lemma will be important for us.

\begin{lem}\label{l:field}
Let $\mathcal{F}\in D_0(\on{Bun}_G)^{\heartsuit,\on{coh}}$. There exists a subfield $\tilde{k}\subset k$ such that:
\begin{itemize}
    \item $\tilde{k}$ embeds into the complex numbers.
    \item There exists a reductive group $\tilde{G}/\tilde{k}$ and a smooth projective curve $\tilde{X}/\tilde{k}$ such that $$\tilde{G}\underset{{\on{Spec}(\tilde{k})}}{\times}\on{Spec}(k)=G, \:\: \tilde{X}\underset{{\on{Spec}(\tilde{k})}}{\times}\on{Spec}(k)=X.$$
    \item There exists a sheaf $\tilde{\mathcal{F}}\in D_0(\on{Bun}_{\tilde{G}}(\tilde{X}))^{\heartsuit,\on{coh}}$ such that the image of $\tilde{\mathcal{F}}$ under the functor $$D_0(\on{Bun}_{\tilde{G}}(\tilde{X}))\rightarrow D_0(\on{Bun}_{\tilde{G}}(\tilde{X}))\underset{\on{Vect}_{\tilde{k}}}{\otimes}\on{Vect}_k\subset D_0(\on{Bun}_G(X))$$
    is $\mathcal{F}$.
\end{itemize}
\end{lem}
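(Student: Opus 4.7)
The proof is a standard Noetherian approximation / spreading-out argument, carried out in two stages. First, I would descend the geometric data. Since reductive groups and smooth projective curves are objects of finite presentation over $k$, there exists a finitely generated subfield $\tilde{k}_0 \subset k$ over $\mathbb{Q}$ together with models $\tilde{G}_0/\tilde{k}_0$ and $\tilde{X}_0/\tilde{k}_0$ such that $G \simeq \tilde{G}_0 \otimes_{\tilde{k}_0} k$ and $X \simeq \tilde{X}_0 \otimes_{\tilde{k}_0} k$ (for $G$ one may use the classification by root data; for $X$ one spreads out a projective embedding and takes flat closures). Since the formation of the moduli stack of bundles commutes with base change, one has $\on{Bun}_{\tilde{G}_0}(\tilde{X}_0) \otimes_{\tilde{k}_0} k \simeq \on{Bun}_G$ as smooth algebraic stacks.

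Next, I would descend the sheaf $\mathcal{F}$. The key observation is that $\mathcal{F} \in D_0(\on{Bun}_G)^{\heartsuit,\on{constr}}$ is characterized by the fact that its $!$-pullback along any smooth map from a finite type $k$-scheme is (after shift) a finite rank vector bundle with flat connection. Choose a smooth atlas $\{\tilde{S}_i \to \on{Bun}_{\tilde{G}_0}(\tilde{X}_0)\}$ by finite type $\tilde{k}_0$-schemes: such an atlas exists because each Harder-Narasimhan stratum of $\on{Bun}_{\tilde{G}_0}(\tilde{X}_0)$ is QCA. On each $(\tilde{S}_i)_k$, the pullback of $\mathcal{F}$ is a finite rank vector bundle with flat connection; together with the gluing isomorphisms on pairwise fiber products, this constitutes finitely presented data. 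By standard spreading-out of coherent sheaves with connections, these data descend to some subfield $\tilde{k} \subset k$ which is countably generated over $\mathbb{Q}$. The property of having zero singular support is preserved automatically, since being a vector bundle with flat connection is stable under base change.

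Finally, any countably generated subfield of a characteristic zero field has cardinality at most $\aleph_0$, and hence embeds into $\mathbb{C}$: pick an algebraically independent family in $\mathbb{C}$ over $\mathbb{Q}$ of the appropriate cardinality to embed the purely transcendental part, then extend to $\tilde{k}$ along an algebraic closure using that $\mathbb{C}$ is algebraically closed.

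I expect the main obstacle to lie in the second stage, specifically in handling the non-quasi-compactness of $\on{Bun}_G$: a general constructible sheaf may have non-trivial behavior on each of the (countably many) connected components indexed by $\pi_1(G)$, and one must check that the compatibility data across the chosen atlas really requires at most countably much descent data, so that $\tilde{k}$ remains of cardinality at most $\aleph_0$. The natural remedy is to descend one Harder-Narasimhan stratum at a time along an increasing exhaustion and to take the union of the resulting countably many finitely generated subfields, which remains countably generated over $\mathbb{Q}$.
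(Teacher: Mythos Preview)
Your proposal is correct and follows essentially the same approach as the paper: descend $G$ and $X$ to a small subfield, choose a smooth atlas of $\on{Bun}_{\tilde{G}}(\tilde{X})$ by finite type schemes, spread out the vector bundles with flat connection on each chart together with the gluing isomorphisms on overlaps, and observe that the resulting field remains countably generated over $\mathbb{Q}$ and hence embeds into $\mathbb{C}$. The paper phrases the countability issue slightly differently—emphasizing that the descent isomorphisms need not be defined over the initial $\tilde{k}$ and so one iteratively enlarges the field—but this is the same mechanism you identify in your final paragraph via the countable exhaustion of the atlas.
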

\begin{proof}
First, let $T$ be a smooth scheme of finite type over $k$ and let $\mathcal{F}\in D_0(T)^{\heartsuit,c}$.\footnote{That is, $\mathcal{F}$ is a de Rham local system on $T$.} In this case, we may find a sub field $\tilde{k}\subset k$ that is finitely generated over the rational numbers and such that:
\begin{itemize}
    \item There exists a smooth scheme $\tilde{T}/\tilde{k}$ of finite type satisfying $\tilde{T}\underset{\textrm{Spec}(\tilde{k})}{\times}\textrm{Spec}(k)=T$.
    \item There exists $\tilde{\mathcal{F}}\in D_0(\tilde{T})^{\heartsuit,c}$ such that $\tilde{\mathcal{F}}$ maps to $\mathcal{F}$ under the composition 
    $$D_0(\tilde{T})\rightarrow D_0(\tilde{T})\underset{\textrm{Vect}_{\tilde{k}}}{\otimes}\textrm{Vect}_k\subset D_0(T).$$
\end{itemize}

\noindent Indeed, this follows from standard Noetherian approximation of schemes and coherent sheaves with connections, respectively.

Now, let $\tilde{k}\subset k$ be a sub field \emph{countably} generated over $\mathbb{Q}$ (in particular, it embeds into $\mathbb{C}$), and such that there exists a reductive group $\tilde{G}/\tilde{k}$ and a smooth projective curve $\tilde{X}/\tilde{k}$ satisfying $G=\tilde{G}\underset{{\textrm{Spec}(\tilde{k})}}{\times}\textrm{Spec}(k)$, $X=\tilde{X}\underset{{\textrm{Spec}(\tilde{k})}}{\times}\textrm{Spec}(k)$. Let $\tilde{S}=\sqcup_i \tilde{S}_i\rightarrow \textrm{Bun}_{\tilde{G}}(\tilde{X})$ be a countable disjoint union of finite type schemes $\tilde{S}_i/\tilde{k}$ that smoothly covers $\textrm{Bun}_{\tilde{G}}(\tilde{X})$. Consider the Cartesian square
\[\begin{tikzcd}
	{\tilde{S}} && S \\
	\\
	{\textrm{Bun}_{\tilde{G}}(\tilde{X})} && {\textrm{Bun}_G(X)}
	\arrow["{\tilde{\pi}}"', from=1-1, to=3-1]
	\arrow[from=3-3, to=3-1]
	\arrow["\pi", from=1-3, to=3-3]
	\arrow[from=1-3, to=1-1]
\end{tikzcd}\]

\noindent where $S=\tilde{S}\underset{{\textrm{Spec}(\tilde{k})}}{\times}\textrm{Spec}(k)$. Let $\mathcal{F}\in D_0(\textrm{Bun}_G(X))^{\heartsuit, \textrm{coh}}$ and write $\mathcal{G}:=\pi^!(\mathcal{F})\in D_0(S)^{\textrm{coh}}$. By the above (noting that $\pi^!$ is t-exact up to a cohomological shift), we may assume there exists $\tilde{\mathcal{G}}\in D_0(\tilde{S})^{\textrm{coh}}$ such that its image under the functor $$D_0(\tilde{S})\rightarrow D_0(\tilde{S})\underset{{\textrm{Vect}_{\tilde{k}}}}{\otimes}\textrm{Vect}_k\subset D_0(S)$$ is $\mathcal{G}$. We want to show that $\tilde{\mathcal{G}}$ descends to a sheaf on $\textrm{Bun}_{\tilde{G}}(\tilde{X})$. Consider the commutative diagram:

\[\begin{tikzcd}
	\vdots && \vdots \\
	\\
	{D_0(\tilde{S}\underset{{\textrm{Bun}_{\tilde{G}}(\tilde{X})}}{\times}\tilde{S})\underset{{\textrm{Vect}_{\tilde{k}}}}{\otimes}\textrm{Vect}_k} && {D_0(S\underset{{\textrm{Bun}_G(X)}}{\times}S)} \\
	\\
	{D_0(\tilde{S})\underset{{\textrm{Vect}_{\tilde{k}}}}{\otimes}\textrm{Vect}_k} && {D_0(S)} \\
	\\
	{D_0(\textrm{Bun}_{\tilde{G}}(\tilde{X}))\underset{{\textrm{Vect}_{\tilde{k}}}}{\otimes}\textrm{Vect}_k} && {D_0(\textrm{Bun}_G(X))}
	\arrow[from=5-1, to=5-3]
	\arrow[from=7-1, to=5-1]
	\arrow[from=7-1, to=7-3]
	\arrow[from=7-3, to=5-3]
	\arrow[shift left=1, from=5-1, to=3-1]
	\arrow[shift left=1, from=5-3, to=3-3]
	\arrow[from=3-1, to=3-3]
	\arrow[shift right=1, from=5-1, to=3-1]
	\arrow[shift right=1, from=5-3, to=3-3]
	\arrow[shift left=1, from=3-3, to=1-3]
	\arrow[shift right=1, from=3-3, to=1-3]
	\arrow[from=3-3, to=1-3]
	\arrow[shift right=1, from=3-1, to=1-1]
	\arrow[shift left=1, from=3-1, to=1-1]
	\arrow[from=3-1, to=1-1]
\end{tikzcd}\]

The horizontal functors are fully faithful. For $\tilde{\cG}$ to descend to $\on{Bun}_{\tilde{G}}(\tilde{X})$, we need the descent data of $\cG$ to be defined over $\tilde{k}$. This need not be the case a priori. However, for each isomorphism in the descent datum, we may find an extension of $\tilde{k}$ so that the isomorphism is defined over the extension. Iterating this process, we obtain a field $L$ (still countably generated over $\bQ$) such that the entire descent datum is defined over $L$. Replacing $\tilde{k}$ by $L$ if needed, we may assume $\tilde{\cG}$ descends to a sheaf on $\on{Bun}_{\tilde{G}}(\tilde{X})$.
\end{proof}

\subsubsection{} As a consequence of Lemma \ref{l:field}, we get:

\begin{prop}\label{p:cst}

Let $G$ be a semisimple simply-connected algebraic group. Then any element of $D_0(\on{Bun}_G)^{\heartsuit, \on{coh}}$ is given by direct sums of the constant sheaf $\underline{k}_{\on{Bun}_G}$.

\end{prop}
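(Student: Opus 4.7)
\emph{Proof plan.} The strategy is to combine the Noetherian descent provided by Lemma~\ref{l:field} with a classical topological input: for $G$ semisimple and simply-connected, the analytic stack $\on{Bun}_G^{\on{an}}$ has trivial fundamental group, so every local system on it is a direct sum of the constant one.

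The first step is to reduce to $k=\bC$. Given $\cF\in D_0(\on{Bun}_G)^{\heartsuit,\on{constr}}$, Lemma~\ref{l:field} produces a subfield $\tilde k\subset k$ that embeds into $\bC$, a reductive group $\tilde G/\tilde k$ and a smooth projective curve $\tilde X/\tilde k$ base-changing to $G$ and $X$, and a sheaf $\tilde\cF\in D_0(\on{Bun}_{\tilde G}(\tilde X))^{\heartsuit,\on{constr}}$ pulling back to $\cF$. Fix an embedding $\tilde k\hookrightarrow \bC$ and let $\cF_{\bC}$ denote the resulting object on $\on{Bun}_{\tilde G_{\bC}}(\tilde X_{\bC})$. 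Since the pullback functor $D_0(\on{Bun}_{\tilde G}(\tilde X))\to D_0(\on{Bun}_{\tilde G_{\bC}}(\tilde X_{\bC}))$ is fully faithful on the constructible heart (being a base change along a flat extension of algebraically closed fields), the property of being a direct sum of constant sheaves descends from $\cF_{\bC}$ to $\tilde\cF$, and hence to $\cF$. Thus it suffices to prove the proposition when $k=\bC$.

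Over $\bC$, the Riemann--Hilbert correspondence identifies $D_0(\on{Bun}_G)^{\heartsuit,\on{constr}}$ with the category of finite-rank $\bC$-local systems on the analytic stack $\on{Bun}_G^{\on{an}}$. I would then invoke the classical fact that, for $G$ semisimple and simply-connected, $\on{Bun}_G^{\on{an}}$ is simply-connected in the stacky sense, so that every such local system is a direct sum of trivial ones. The cleanest way to see this is via the Drinfeld--Simpson uniformization $\on{Bun}_G\simeq \on{Maps}(X-x,G)\backslash \on{Gr}_{G,x}$: when $G$ is simply-connected, the affine Grassmannian $\on{Gr}_{G,x}$ is connected and each of its Schubert strata is simply-connected, so $\on{Gr}_{G,x}^{\on{an}}$ is simply-connected, while the group $\on{Maps}(X-x,G)^{\on{an}}$ is path-connected; the homotopy exact sequence of the quotient then yields $\pi_1(\on{Bun}_G^{\on{an}})=0$.

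The main obstacle is making Step~3 fully rigorous: one needs a careful interpretation of $\pi_1$ for the stack $\on{Bun}_G^{\on{an}}$ and of the topology of the infinite-dimensional ind-object $\on{Maps}(X-x,G)^{\on{an}}$ so that the homotopy exact sequence applies, or alternatively a direct reference to the Teleman/Beauville--Laszlo-type computations of $\pi_1(\on{Bun}_G)$ for simply-connected $G$. The descent Step~1 is essentially formal given Lemma~\ref{l:field}, and Step~2 is the standard Riemann--Hilbert comparison in the de Rham setting.
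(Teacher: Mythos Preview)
Your overall strategy matches the paper's: reduce to $k=\bC$ via Lemma~\ref{l:field}, apply Riemann--Hilbert, and use that $\on{Bun}_G^{\on{an}}$ is simply-connected. However, you have misidentified where the real work lies.

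You treat Step~2 as ``the standard Riemann--Hilbert comparison in the de Rham setting,'' but this is exactly the nontrivial point. Riemann--Hilbert is an equivalence only on \emph{regular} holonomic D-modules, and an object of $D_0(\on{Bun}_G)^{\heartsuit,\on{constr}}$ is a priori merely a flat vector bundle on each smooth atlas, with no regularity at infinity guaranteed (the atlases are not proper). The exponential D-module on $\bA^1$ is the standard cautionary example: it lies in $D_0(\bA^1)^{\heartsuit,c}$, its analytification is the trivial rank-one local system, yet it is not the constant D-module. Without regularity you cannot conclude from ``$\cF^{\on{an}}$ is trivial'' that $\cF$ is trivial. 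The paper closes this gap with a separate Lemma~\ref{l:reg}, which---interestingly---uses the very uniformization $\on{Gr}_{G,x}\to\on{Bun}_G$ you invoke in Step~3: one pulls back to a Schubert variety, passes to a smooth projective resolution, and uses that a flat bundle on a smooth proper scheme is automatically regular.

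By contrast, the step you flag as the main obstacle---simple-connectedness of $\on{Bun}_G^{\on{an}}$---the paper handles by citation. A minor side remark: in your descent step, $\tilde k$ is not algebraically closed (Lemma~\ref{l:field} only gives a field finitely or countably generated over $\bQ$), so your parenthetical justification for full faithfulness needs adjusting, though the conclusion is fine.
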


\begin{proof}
Let us first assume that our ground field is the complex numbers. By Lemma \ref{l:reg} below, elements of $D_0(\textrm{Bun}_G)^{\heartsuit}$ have regular singularities. Thus, we are in the setting of the Riemann-Hilbert correspondence. Denote by $\textrm{Bun}_G^{\textrm{top}}$ the underlying topological stack of the analytic stack $\textrm{Bun}_G(\mathbb{C})$.\footnote{See e.g. \cite[\S 20]{noohi2005foundations} for a rigorous account of passing from complex algebraic stacks to their underlying analytic stacks.} Since $G$ is simply-connected, so is $\textrm{Bun}_G^{\textrm{top}}$ (see e.g. \cite[\S 2]{biswas2021fundamental}). Thus, any irreducible topological local system of $\textrm{Bun}_G^{\textrm{top}}$ is trivial and there are no non-trivial extensions. This proves the assertion in the complex case.

Next, let our ground field $k$ be arbitrary, and let $\mathcal{F}\in D_0(\textrm{Bun}_G)^{\heartsuit, \textrm{coh}}$. By Lemma \ref{l:field}, we may assume that our ground field embeds into $\mathbb{C}$. Write $G_{\mathbb{C}}=G\underset{\textrm{Spec}(k)}{\times}\textrm{Spec}(\mathbb{C})$, $X_{\mathbb{C}}=X\underset{\textrm{Spec}(k)}{\times}\textrm{Spec}(\mathbb{C})$. Since the functor $$D_0(\textrm{Bun}_{G}(X))\underset{\textrm{Vect}_k}{\otimes}\textrm{Vect}_{\mathbb{C}}\rightarrow D_0(\textrm{Bun}_{G_{\mathbb{C}}}(X_{\mathbb{C}}))$$ is fully faithful and t-exact,\footnote{Note that the left hand side admits a natural t-structure coming from the t-structure on $D_0(\textrm{Bun}_G(X))$.} we are done by the complex case.
\end{proof}
\subsubsection{} We used the following lemma in the proof above:

\begin{lem}\label{l:reg}

Let $G$ be a semisimple algebraic group. Then any $\mathcal{F}\in D_0(\on{Bun}_G)$ is regular holonomic.\footnote{That is, for each $n\in \mathbb{Z}$, any coherent subsheaf $\mathcal{G}\subset H^n(\mathcal{F})$ is (smooth locally) regular holonomic in the usual sense.}
\end{lem}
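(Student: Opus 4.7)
\emph{Proof proposal.} The plan is to reduce the claim, via Drinfeld--Simpson uniformization, to a question on projective Schubert varieties of the affine Grassmannian, where regularity is automatic.

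Since regular holonomicity is a smooth-local notion on stacks, I would fix a smooth map $f:S\to \Bun_G$ from a finite-type smooth scheme $S$ and aim to show that $f^!(\cF)\in D_0(S)$ is regular holonomic. Working one cohomology sheaf at a time, the question reduces to whether a vector bundle with flat connection on $S$ coming (smooth-locally) from $\Bun_G$ must be regular. Next, I would invoke Drinfeld--Simpson exactly as in the proof of Theorem \ref{t:A1}: after passing to an étale cover $h:S'\to S$, there is a lift $g:S'\to \Gr_{G,x}$ with $f\circ h = \pi_G\circ g$, where $\pi_G:\Gr_{G,x}\to \Bun_G$. Since regular holonomicity is étale-local on $S$, it suffices to treat $g^!\pi_G^!(\cF)$.

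The sheaf $\pi_G^!(\cF)$ lies in $D_0(\Gr_{G,x})$, by an obvious variant of Lemma \ref{l:!} adapted to the ind-scheme $\Gr_{G,x}$ and to the pro-smooth quotient map $\pi_G$. Because $S'$ is of finite type, $g$ factors through some closed Schubert subscheme $Y=\Gr_{G,x}^{\leq\mu}$, which is projective (though possibly singular). Hence it is enough to show that the restriction of $\pi_G^!(\cF)$ to $Y$ is regular holonomic. For this, I would pick a resolution of singularities $p:\widetilde{Y}\to Y$: the pullback $p^!$ of the restriction produces a complex on the smooth projective variety $\widetilde{Y}$ whose cohomology sheaves are vector bundles with flat connections, and any such object on a smooth proper variety is tautologically regular holonomic, since there is no boundary at which irregularity could occur. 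Regular holonomicity then descends from $\widetilde{Y}$ to $Y$ along the proper surjective map $p$ (via $p_{*,\dR}\circ p^!\to \id$ exhibiting the restriction as a direct summand, in the relevant sense), and pulls back along $g$ to give regularity of $g^!\pi_G^!(\cF)$ on $S'$.

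The main obstacle I expect is the bookkeeping around singular support and regular holonomicity on the ind-scheme $\Gr_{G,x}$: establishing the version of Lemma \ref{l:!} that places $\pi_G^!(\cF)$ in $D_0(\Gr_{G,x})$, making sense of restricting such an object to a closed Schubert subscheme, and then cleanly descending regularity from its resolution. Once that formalism is in place the argument is essentially tautological, since the \emph{content} of the statement is simply that vector bundles with flat connection on smooth projective varieties are regular. A minor auxiliary remark: the argument above works verbatim for any reductive $G$ (not just semisimple), since Drinfeld--Simpson applies in that generality; in particular the restriction to semisimple $G$ in the lemma statement is only needed downstream for Proposition \ref{p:cst}.
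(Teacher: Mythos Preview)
Your approach is essentially the paper's: reduce via Drinfeld--Simpson to the affine Grassmannian, factor through a closed Schubert variety, and use a resolution of singularities to land on a smooth projective variety where objects of $D_0$ are tautologically regular. The only difference is in the last step. Rather than descending regularity along the resolution $p:\widetilde{Y}\to Y$ (which, as you correctly anticipate, requires some care), the paper pulls back the Schubert stratification of $\overline{\Gr_{G,x}^{\check{\lambda}}}$ to a finite stratification of $S'$ and checks regularity stratum by stratum. For each $\check{\mu}\leq\check{\lambda}$, the resolution of $\overline{\Gr_{G,x}^{\check{\mu}}}$ is an isomorphism over the open cell $\Gr_{G,x}^{\check{\mu}}$, so regularity on that cell is immediate from regularity on the resolution, with no descent needed. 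This sidesteps the obstacle you flag entirely.

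One correction to your closing remark: Drinfeld--Simpson, as invoked here, requires $G$ semisimple. For $G=\bG_m$ and $X$ of positive genus, a non-trivial degree-$0$ line bundle on $X$ remains non-trivial on $X-x$ (indeed $\Pic(X-x)\simeq\textrm{Jac}(X)$), and passing to an \'etale cover of $S=\on{pt}$ does nothing. The paper handles general reductive $G$ in Remark~\ref{r:red} via the isogeny $Z(G)^{\circ}\times [G,G]\to G$ together with the explicit description of $\Bun_T$, not by extending Drinfeld--Simpson.
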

\begin{proof}
Let $\mathcal{F}\in D_0(\textrm{Bun}_G)$. We want to show that for every scheme $S$ mapping smoothly to $\textrm{Bun}_G$ via some $f: S\rightarrow \textrm{Bun}_G$, the sheaf $f^!(\mathcal{F})$ is regular holonomic. As we saw in the proof of Theorem \ref{t:A1}, there exists an étale cover $h: S'\to S$ and a map $g: S'\rightarrow \on{Gr}_{G,x}$ making the diagram commute:
\[\begin{tikzcd}
	{S'} && {\textrm{Gr}_{G,x}} \\
	\\
	S && {\textrm{Bun}_G.}
	\arrow["f"', from=3-1, to=3-3]
	\arrow["{\pi_G}", from=1-3, to=3-3]
	\arrow["h"', from=1-1, to=3-1]
	\arrow["g", from=1-1, to=1-3]
\end{tikzcd}\]

\noindent Since $h$ is an étale cover, it suffices to check that $h^!\circ f^!(\mathcal{F})$ is a regular holonomic sheaf on $S'$.\footnote{The fact that regularity can be checked on étale (in fact, smooth) covers follows e.g. from the description of regularity in \cite[Definition 4.1.1.]{kashiwara2016regular}.} For a dominant coweight $\check{\mu}$ of $G$, write $\textrm{Gr}_{G,x}^{\check{\mu}}:=G(O_x)\backslash G(O_x)\check{\mu}(t)G(O_x)\subset G(O_x)\backslash G(K_x)=\textrm{Gr}_{G,x}$ for the $G(O_x)$-orbit of $\check{\mu}(t)$ in the affine grassmannian. There exists some dominant coweight $\check{\lambda}$ such that $S'\rightarrow \textrm{Gr}_{G,x}$ factors through $\overline{\textrm{Gr}_{G,x}^{\check{\lambda}}}=\bigsqcup_{\mu\leq \lambda} \textrm{Gr}_{G,x}^{\check{\mu}}\subset \textrm{Gr}_{G,x}$. Pulling back, this provides a finite stratification on $S'$. Since regularity of holonomic D-modules can be checked on a finite stratification,\footnote{The assertion follows by inducting on the number of strata by choosing an open stratum and using the standard localization triangle of D-modules.} we see that it suffices to show that the pullback of $\mathcal{F}$ under the map $$g_{\check{\mu}}: \textrm{Gr}_{G,x}^{\check{\mu}}\rightarrow \textrm{Bun}_G$$ is regular holonomic for any dominant coweight $\check{\mu}$. Noting that $g_{\check{\mu}}^!(\mathcal{F})$ comes from pullback along $\textrm{Gr}_{G,x}^{\check{\mu}}\hookrightarrow \overline{\textrm{Gr}_{G,x}^{\check{\mu}}}$, this follows by choosing a projective resolution of singularities $Y$ of $\overline{\textrm{Gr}_{G,x}^{\check{\mu}}}$ that induces an isomorphism over $\textrm{Gr}_{G,x}^{\check{\mu}}$. Indeed, the pullback of $\cF$ to any smooth scheme has singular support contained in the zero section. Moreover, for a smooth projective scheme $Y$, any $\mathcal{G}\in D_0(Y)$ is automatically regular holonomic.
\end{proof}

\begin{rem}\label{r:red}

It is not difficult to show that Lemma \ref{l:reg} also holds whenever $G$ is an arbitrary reductive group. Indeed, from the isogeny $Z(G)^{\circ}\times [G,G]\rightarrow G$, we may reduce to the case where $G$ is the product of a torus and a semisimple algebraic group (by the same reasoning as in the proof of Theorem \ref{t:A3} below). Since $\textrm{Bun}_{G_m}\simeq \mathbb{Z}\times \textrm{Jac}(X)\times \mathbb{B}\mathbb{G}_m$ is ind-projective up to a stacky factor, the assertion follows from the same reasoning as above.

We remark that it was shown in \cite{arinkin2020stack} that, in fact, automorphic D-modules with nilpotent singular support are regular holonomic. This result is incredibly deep, however, which is why we do not appeal to it.

\end{rem}

\subsection{Proof of Theorem A (semisimple case)}

Let us proceed with the proof of Theorem A when $G$ is semisimple.

\begin{thm}\label{t:A2}

Let $G$ be a semisimple algebraic group. Then we have an inclusion 
\[
D_0(\on{Bun}_G)\subset D(\on{Bun}_G)^{\on{max-anti-temp}}.
\]

\end{thm}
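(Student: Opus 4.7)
The strategy will be to reduce to the simply-connected semisimple case and then, via Proposition~\ref{p:cst}, to exhibit the dualizing sheaf $\omega_{\on{Bun}_G}$ as a Hecke-equivariant generator lying in $D(\on{Bun}_G)^{\on{max-anti-temp}}$. I would begin with the reduction. Let $\tilde{G}\to G$ be the simply-connected cover, with induced smooth map $\alpha\colon \on{Bun}_{\tilde{G}}\to \on{Bun}_G$. For $\cF\in D_0(\on{Bun}_G)$ supported on a single connected component, one picks a dominant coweight $\check{\la}$ of $G$ so that $V^{\check{\la}}\star \cF$ is supported on a component in the image of $\alpha$. Since $\alpha^!(V^{\check{\la}}\star\cF)\in D_0(\on{Bun}_{\tilde{G}})$ by Lemma~\ref{l:!}, once the simply-connected case is known this pullback is maximally anti-tempered, and one descends along $\alpha$ (which is conservative on its component) via Proposition~\ref{p:funct} to conclude that $V^{\check{\la}}\star \cF$, and hence $\cF$ as a Hecke-summand of $V^{-w_0\check{\la}}\star V^{\check{\la}}\star \cF$, is maximally anti-tempered.

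For simply-connected $G$, I would then carry out standard t-structure reductions. The subcategory $D(\on{Bun}_G)^{\on{max-anti-temp}}$ is the kernel of the continuous right adjoint $\on{quasi-temp}$, hence closed under all small limits, colimits, and extensions. Left-completeness of the t-structure on $D_0(\on{Bun}_G)$ lets me write $\cF\simeq \lim_n \tau^{\geq -n}\cF$ and reduces the problem to showing that each cohomology sheaf $H^i(\cF)\in D_0(\on{Bun}_G)^{\heart}$ is maximally anti-tempered. Each such object is a filtered colimit of its coherent subobjects in $D_0(\on{Bun}_G)^{\heart, c}$, i.e.\ of vector bundles with flat connection, which are constructible; closure under filtered colimits then further reduces to $\cF\in D_0(\on{Bun}_G)^{\heart, \on{constr}}$.

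At this point Proposition~\ref{p:cst} identifies $\cF$ with a direct sum of the constant sheaf $\underline{k}_{\on{Bun}_G}$, which on each connected component agrees with the dualizing sheaf $\omega_{\on{Bun}_G}$ up to cohomological shift. It therefore suffices to show $\omega_{\on{Bun}_G}\in D(\on{Bun}_G)^{\on{max-anti-temp}}$. By Corollary~\ref{c:hecke}, this reduces to exhibiting a $\on{Hecke}_x^G$-equivariant structure on $\omega_{\on{Bun}_G}$, and such a structure is canonical: functoriality of the dualizing sheaf under $!$-pullback provides compatible identifications $\overleftarrow{h}^!(\omega_{\on{Bun}_G})\simeq \omega_{\on{Hecke}_x^G}\simeq \overrightarrow{h}^!(\omega_{\on{Bun}_G})$, and the same recipe supplies all higher cosimplicial coherence data defining $D(\on{Bun}_G)^{\on{Hecke}_x^G}$.

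The main obstacle will be the descent step in the reduction to simply-connected $G$: one must carefully match the Hecke action on $D(\on{Bun}_G)$ with that on $D(\on{Bun}_{\tilde{G}})$ through $\alpha$ and Proposition~\ref{p:funct}, while using Hecke modifications to cover all connected components of $\on{Bun}_G$. Everything downstream of that reduction is essentially formal given Proposition~\ref{p:cst} and Corollary~\ref{c:hecke}: the serious geometric content sits in Proposition~\ref{p:cst} (topology of $\on{Bun}_G$ for simply-connected $G$) and in Proposition~\ref{p:key} (which underlies Corollary~\ref{c:hecke}).
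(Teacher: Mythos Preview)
Your treatment of the simply-connected case is correct and matches the paper's Step~1 closely: t-structure reductions (using that $\on{quasi-temp}$, being both a right adjoint and given by convolution with a fixed object of $\on{Sph}_G$, preserves limits and colimits) collapse the question to Proposition~\ref{p:cst}, and the constant/dualizing sheaf is visibly $\on{Hecke}_x^G$-equivariant, hence maximally anti-tempered by Corollary~\ref{c:hecke}.

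The descent step, however, has a gap. Proposition~\ref{p:funct}(a) only tells you that $\alpha^!$ carries max-anti-$G$-temp objects to max-anti-$\tilde G$-temp objects, not the converse; there is no ``descent of maximal anti-temperedness along a conservative functor'' available from it, because the two notions live over different spherical categories and the map $\tilde G(K)\to G(K)$ is not surjective. The ingredient you are missing is Lemma~\ref{l:key}: for a central isogeny, $\alpha^!$ preserves \emph{quasi}-temperedness (the point being that $\on{Gr}_{\tilde G,x}\to\on{Gr}_{G,x}$ is an inclusion of connected components, so $f_{\on{Sph}}^!(\cH)\overset{\on{Sph}_{\tilde G}}{\star}\alpha^!(\cF)$ is a summand of $\alpha^!(\cH\overset{\on{Sph}_G}{\star}\cF)$). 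The paper's Step~2 is organized around this: set $\tilde{\cF}:=\on{quasi-temp}(\cF)$, which still lies in $D_0(\on{Bun}_G)$ since Hecke functors preserve singular support; then $\alpha^!(\tilde{\cF})$ is simultaneously max-anti-$\tilde G$-temp (it lies in $D_0(\on{Bun}_{\tilde G})$, so the simply-connected case applies) and quasi-$\tilde G$-temp (by Lemma~\ref{l:key}), hence zero. Conservativity of $\alpha^!$ on its image then forces $\tilde{\cF}=0$ there, and the $V^{\check\lambda}$-shift handles the remaining components exactly as you indicate.

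So the fix is small but essential: replace the appeal to Proposition~\ref{p:funct} in your descent by Lemma~\ref{l:key}, and reorganize to work with $\on{quasi-temp}(\cF)$ rather than $\cF$ itself.
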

\begin{proof}

\emph{Step 1.} Assume in this first step that $G$ is semisimple and simply-connected, and let $\mathcal{F}\in D_{\mathfrak{z}(\mathfrak{g})}(\textrm{Bun}_G)=D_0(\textrm{Bun}_G)$. Note that by Corollary \ref{c:hecke}, the constant sheaf $\underline{k}_{\textrm{Bun}_G}$ belongs to $D(\textrm{Bun}_G)^{\textrm{max-anti-temp}}$. By Proposition \ref{p:cst}, we have $$D_0(\textrm{Bun}_G)^{\heartsuit}\subset D(\textrm{Bun}_G)^{\textrm{max-anti-temp}}.$$

Now, let $\mathcal{F}\in D_0(\textrm{Bun}_G)$ have bounded cohomologies. Then we may write $\mathcal{F}$ as a finite colimit of its cohomology sheaves $H^n(\mathcal{F})[-n]$, which implies that $\mathcal{F}\in D(\textrm{Bun}_G)^{\textrm{max-anti-temp}}$.

If $\mathcal{F}\in D_0(\textrm{Bun}_G)$ is cohomologically bounded from below, then we may write $\mathcal{F}\simeq \underset{n}{\textrm{colim}}\: \tau^{\leq n}(\mathcal{F})$ where each $\tau^{\leq n}(\mathcal{F})$ is cohomologically bounded. Thus, $\mathcal{F}$ is maximally anti-tempered in this case as well.

Finally, let $\mathcal{F}\in D_0(\textrm{Bun}_G)$ be arbitrary. We may write $\mathcal{F}\simeq \underset{n}{\textrm{lim}}\: \tau^{\geq -n}(\mathcal{F})$. Then we have that each $\tau^{\geq -n}(\mathcal{F})$ is cohomologically bounded from below and hence maximally anti-tempered. Since the functor $$\textrm{quasi-temp}: D(\textrm{Bun}_G)\rightarrow D(\textrm{Bun}_G)^{\textrm{quasi-temp}}$$
commutes with limits (being a right adjoint), we have $\textrm{quasi-temp}(\mathcal{F})=0$.\\

\emph{Step 2.} Assume now that $G$ is semisimple but not necessarily simply-connected. Let $\Tilde{G}\rightarrow G$ be a universal covering so that $\Tilde{G}$ is semisimple simply-connected, and let $\mathcal{F}\in D_{\mathfrak{z}(\mathfrak{g})}(\textrm{Bun}_G)=D_0(\textrm{Bun}_G)$.

We need to show that $\tilde{F}:=\textrm{quasi-temp}(\mathcal{F})$ is zero. From the central isogeny $\tilde{G}\rightarrow G$, we get an induced smooth map $f: \textrm{Bun}_{\tilde{G}}\rightarrow \textrm{Bun}_G$. Note first that $\tilde{F}\in D_0(\textrm{Bun}_G)$. Indeed, the functor $D(\textrm{Bun}_G)\rightarrow D(\textrm{Bun}_G)^{\textrm{quasi-temp}}$ is given by convolving with a Hecke functor at $x$ (see $\S 2.3.4$), and these preserve singular support, cf. $\S 2.2.6$. By Lemma \ref{l:!}, we have $f^!(\mathcal{\tilde{F}})\in D_0(\textrm{Bun}_{\tilde{G}})$, and so $f^!(\mathcal{\tilde{F}})$ is maximally anti-tempered by Step 1. On the other hand, Lemma \ref{l:key} says that $f^!(\mathcal{\tilde{F}})$ is quasi-tempered. Hence $f^!(\mathcal{\tilde{F}})=0$.

Thus, if $\mathcal{\tilde{F}}$ is not supported away from the image of $f$, this finishes the proof. If $\mathcal{\tilde{F}}$ is supported away from $\textrm{Im(f)}$, choose a dominant coweight $\check{\lambda}$ such that $V^{\check{\lambda}}\star \mathcal{\tilde{F}}$ is not supported away from $\textrm{Im}(f)$. Note also that $V^{\check{\lambda}}\star \mathcal{\tilde{F}}\neq 0$ whenever $\mathcal{\tilde{F}}\neq 0$ because $\mathcal{\tilde{F}}$ occurs as a direct summand of $V^{\check{-w_0(\check{\lambda}})}\star (V^{\check{\lambda}}\star \mathcal{\tilde{F}})$. However, by the above argument $f^!(V^{\check{\lambda}}\star \mathcal{\tilde{F}})=0$, which implies that $\mathcal{\tilde{F}}=0$.
\end{proof}

\subsection{Proof of Theorem A for reductive groups} We are now able to prove Theorem A for a general reductive group $G$. Let us start by reviewing a couple of preliminary lemmas.

\begin{lem}\label{l: tensor}
Let $\mathcal{Y}$ be a smooth algebraic stack and $\mathcal{N}$ a Zariski-closed conical subset of $T^*\mathcal{Y}$. For $\mathcal{F}\in D_0(\mathcal{Y})$ \emph{and} $\mathcal{G}\in D_{\mathcal{N}}(\mathcal{Y})$, the tensor product $\mathcal{F}\overset{!}{\otimes} \mathcal{G}$ lies in $D_{\mathcal{N}}(\mathcal{Y})$.

\end{lem}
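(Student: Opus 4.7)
The plan is to reduce to the case of a smooth scheme via the definition of singular support on stacks, and then to handle that case by the standard microlocal estimate for the singular support of a non-characteristic pullback along the diagonal.

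First I would reduce to the case $\mathcal{Y}=S$ a smooth scheme. Given a smooth atlas $p:S\to \mathcal{Y}$ from a smooth finite-type scheme, the $!$-pullback $p^!$ commutes with $\overset{!}{\otimes}$ by base change along the Cartesian square relating the diagonals of $S$ and $\mathcal{Y}$ (i.e.\ $\Delta_\mathcal{Y}\circ p=(p\times p)\circ \Delta_S$), and by Lemma \ref{l:!} it sends $D_0(\mathcal{Y})$ and $D_\mathcal{N}(\mathcal{Y})$ into $D_0(S)$ and $D_{\mathcal{N}_S}(S)$, respectively. Hence it suffices to treat the scheme case.

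On a smooth scheme I would write $\mathcal{F}\overset{!}{\otimes}\mathcal{G}=\Delta^!(\mathcal{F}\boxtimes\mathcal{G})$ for $\Delta:S\to S\times S$ the diagonal, and compute the singular support of $\mathcal{F}\boxtimes\mathcal{G}$. The classical equality $\on{SS}(\mathcal{F}\boxtimes\mathcal{G})=\on{SS}(\mathcal{F})\times \on{SS}(\mathcal{G})$ (immediate from a good filtration argument) gives $\on{SS}(\mathcal{F}\boxtimes\mathcal{G})\subset \{0_S\}\times \mathcal{N}$ inside $T^*(S\times S)|_{\Delta(S)}\simeq T^*S\oplus T^*S$. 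Under this identification the codifferential $d\Delta^\bullet$ is the sum map $(\xi_1,\xi_2)\mapsto \xi_1+\xi_2$, whose kernel (the antidiagonal) meets $\{0_S\}\times \mathcal{N}$ only in the zero section. Thus $\Delta$ is non-characteristic for $\mathcal{F}\boxtimes\mathcal{G}$, and the Kashiwara--Schapira-type estimate for the singular support of a non-characteristic $!$-pullback gives
\[
\on{SS}(\Delta^!(\mathcal{F}\boxtimes \mathcal{G}))\subset d\Delta^\bullet\big(\on{SS}(\mathcal{F})\times \on{SS}(\mathcal{G})\big)\subset d\Delta^\bullet(\{0_S\}\times \mathcal{N})=\mathcal{N},
\]
which is the required containment.

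The main obstacle will be invoking the non-characteristic pullback estimate, since the paper only records the singular-support estimate for smooth pullbacks (Lemma \ref{l:!}) and proper pushforwards (Lemma \ref{l:*}), not for closed immersions. If one prefers to bypass the general microlocal result, one can argue directly: using the left-completeness of $D_\mathcal{N}(S)$ and t-structure devissage one reduces to $\mathcal{F}\in D_0(S)^{\heartsuit,c}$, which is simply a vector bundle with a flat connection. For such $\mathcal{F}$, a good filtration on $\mathcal{G}$ induces a good filtration on $\mathcal{F}\overset{!}{\otimes}\mathcal{G}$ (up to cohomological shift) with associated graded $\mathcal{F}\otimes_{\mathcal{O}_S}\on{gr}(\mathcal{G})$, and local freeness of $\mathcal{F}$ over $\mathcal{O}_S$ forces the support of this on $T^*S$ to coincide with that of $\on{gr}(\mathcal{G})$, hence to lie in $\mathcal{N}$.
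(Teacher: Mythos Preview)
Your proposal is correct and follows essentially the same route as the paper: reduce to a smooth scheme, write $\mathcal{F}\overset{!}{\otimes}\mathcal{G}=\Delta^!(\mathcal{F}\boxtimes\mathcal{G})$, observe that $\Delta$ is non-characteristic for $\mathcal{F}\boxtimes\mathcal{G}\in D_{0\times\mathcal{N}}$, and conclude via the codifferential estimate. The only cosmetic difference is that the paper phrases the final step at the level of coherent subsheaves of cohomology (to match the ind-level definition of $D_{\mathcal{N}}$), whereas your main argument writes $\on{SS}(\mathcal{F}\boxtimes\mathcal{G})$ as if $\mathcal{F},\mathcal{G}$ were themselves coherent; your alternative paragraph already indicates how to bridge this via t-structure devissage and left-completeness.
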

\begin{proof}
We may assume that $\mathcal{Y}$ is a smooth scheme. Consider the diagonal map $\Delta: \mathcal{Y}\rightarrow \mathcal{Y}\times \mathcal{Y}$. We need to show that $$\Delta^!(\mathcal{F}\boxtimes \mathcal{G})\in D_{\mathcal{N}}(\mathcal{Y}).$$

\noindent More precisely, we need to show that if $\mathcal{L}$ is a coherent subsheaf of $H^n(\mathcal{F}\overset{!}{\otimes} \mathcal{G})$, then the singular support of $\mathcal{L}$ is contained in $\mathcal{N}$.

First, note that the sheaf $\mathcal{F}\boxtimes \mathcal{G}$ is \emph{non-characteristic} with respect to $\Delta$.\footnote{That is, the codifferential $d\Delta^{\dot}: SS(\mathcal{F}\boxtimes \mathcal{G})\times_{\mathcal{Y}\times\mathcal{Y}}\mathcal Y\rightarrow T^*\mathcal{Y}$ is finite over its image.} It follows that for any coherent subsheaf $\mathcal{H}\subset H^n(\mathcal{F}\boxtimes\mathcal{G})$, $\Delta^!(\mathcal{H})$ is still coherent and concentrated in degree $\textrm{dim}\: \textrm{Bun}_G$. In particular, choosing a good filtration for $\mathcal{H}$, the induced filtration on $\Delta^!(\mathcal{H})$ is also good, and so the singular support of $\Delta^!(\mathcal{H})$ coincides with the image of the codifferential 
\[
d\Delta^{\dot}: SS(\mathcal{H})\underset{\mathcal{Y}\times\mathcal{Y}}{\times}\mathcal Y\rightarrow T^*\mathcal{Y}.
\]

\noindent Since $\mathcal{F}\boxtimes\mathcal{G}\in D_{0\times\mathcal{N}}(\mathcal{Y}\times\mathcal{Y})$, this image is contained in $\mathcal{N}$. In particular, if $\mathcal{L}$ is a coherent subsheaf of $H^n(\mathcal{F}\overset{!}{\otimes}\mathcal{G})\simeq \Delta^!(H^{n-\textrm{dim}\:\mathcal{Y}}(\mathcal{F}\boxtimes\mathcal{G}))$, the singular support of $\mathcal{L}$ is contained in $\mathcal{N}$.
\end{proof}

\subsubsection{} We will also need the following lemma:

\begin{lem}\label{l:top}
Let $\mathcal{Y}$ be an algebraic stack eventually coconnective locally almost of finite type, and let $\mathcal{C}$ be a dualizable DG category. For an object $\mathcal{F}\in \on{QCoh}(\mathcal{Y})\otimes \mathcal{C},$ if for all points $i_y: \on{spec}(k')\rightarrow \mathcal{Y}$ of the underlying topological space of $\mathcal{Y}, \mathcal{F}$ gets mapped to zero under the functor $$\on{QCoh}(\mathcal{Y})\otimes \mathcal{C}\rightarrow \on{Vect}_{k'}\otimes \mathcal{C}$$
then $\mathcal{F}$ is zero.

\end{lem}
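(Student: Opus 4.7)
The plan is to reduce the statement to the pure $\QCoh$-theoretic case by exploiting the preservation of joint conservativity under tensoring with a dualizable DG category.

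First, I would prove the pure $\QCoh$ version of the assertion: the collection of pullback functors $\{i_y^{*}\colon \QCoh(\mathcal{Y}) \to \Vect_{k'}\}$, indexed by the points $y$ of the underlying topological space of $\mathcal{Y}$, is jointly conservative. This reduces to the affine case by choosing a smooth atlas of $\mathcal{Y}$ by affine schemes, since smooth base change is conservative on $\QCoh$ and each point of $\mathcal{Y}$ lifts through the atlas. In the affine case $\mathcal{Y} = \Spec(A)$, the claim becomes the derived Nakayama statement that an $A$-module complex $M$ with $M \otimes_{A}^{L} k(\mathfrak{p}) = 0$ for every prime $\mathfrak{p}$ of $A$ must vanish; the eventual coconnectivity of $\mathcal{Y}$ ensures that the relevant derived tensor products have controlled Tor-amplitude, so derived Nakayama applies. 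If one wishes to avoid the unbounded derived setting, one can first reduce $\mathcal{F}$ to bounded cohomological amplitude using eventual coconnectivity, and then apply the classical connective derived Nakayama.

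Next, since $\mathcal{C}$ is dualizable with dual $\mathcal{C}^{\vee}$, the endofunctor $-\otimes \mathcal{C}$ of $\DGCat$ admits both a left and a right adjoint, both given by $-\otimes \mathcal{C}^{\vee}$. In particular, it preserves limits. Since joint conservativity of a family of continuous functors is equivalent to the intersection of its kernels being zero (a limit statement), tensoring the family $\{i_y^{*}\}_{y}$ with $\id_{\mathcal{C}}$ yields the jointly conservative family
\[
\{i_y^{*} \otimes \id_{\mathcal{C}}\colon \QCoh(\mathcal{Y}) \otimes \mathcal{C} \to \Vect_{k'} \otimes \mathcal{C}\}_{y}.
\]
The hypothesis on $\mathcal{F}$ precisely says that it lies in the kernel of every member of this family, so $\mathcal{F} = 0$.

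The main technical obstacle is the pure $\QCoh$ joint conservativity in the first step, which depends on the eventual coconnectivity hypothesis to control amplitudes in the derived Nakayama argument. The tensoring-with-$\mathcal{C}$ step is then a formal consequence of dualizability and requires no additional input.
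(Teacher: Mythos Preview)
The paper does not prove this lemma itself; it cites \cite[Prop.~4.2.2.1]{faergeman2021arinkin}. Your two-step strategy --- establish joint conservativity of $\{i_y^*\}$ on $\QCoh(\mathcal{Y})$, then tensor up using dualizability of $\mathcal{C}$ --- is the natural approach and is essentially what that reference carries out. Step~2 is correct: since $\mathcal{C}$ is dualizable, $-\otimes\mathcal{C}$ preserves limits in $\DGCat$, and in particular $\big(\bigcap_y\ker(i_y^*)\big)\otimes\mathcal{C}\simeq\bigcap_y\ker(i_y^*\otimes\id_{\mathcal{C}})$, so vanishing of the former forces vanishing of the latter.

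Your justification of Step~1 in the affine case is not right, however. Eventual coconnectivity of $A$ says nothing about the Tor-amplitude of $k(\mathfrak{p})$ over $A$ (take $A=k[\epsilon]/(\epsilon^2)$: classical, yet $k$ has infinite Tor-dimension), nor does it bound the amplitude of $\mathcal{F}$. The single-prime derived Nakayama lemma needs $M$ bounded above, which you cannot assume. What you actually need is that for a classical Noetherian ring $A$ the family $\{-\otimes_A^L k(\mathfrak{p})\}_{\mathfrak{p}\in\Spec A}$ is jointly conservative on \emph{all} of $D(A)$; this holds (it is equivalent to Neeman's theorem that the residue fields generate $D(A)$ as a localizing subcategory) but is a global statement about the whole spectrum, not Nakayama at a point. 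The eventual coconnectivity hypothesis plays a different role than you describe: it guarantees that pullback along $^{cl}\mathcal{Y}\hookrightarrow\mathcal{Y}$ is conservative on $\QCoh$ (walk up the finite Postnikov tower of $\mathcal{O}_{\mathcal{Y}}$), reducing to the classical Noetherian case where the preceding applies.
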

\begin{proof}
This is proved in \cite[Prop. 4.2.2.1]{faergeman2021arinkin}.
\end{proof}

\subsubsection{Geometric Langlands for a torus.}

In the proof of Theorem A below, we will start by reducing to the case where our group $G$ can be written as a product $G\simeq T \times H$ of a torus $T$ and a semisimple algebraic group $H$. In this case, we have $D_{\mathfrak{z}(\mathfrak{g})}(\textrm{Bun}_G)\simeq D_{T^*\textrm{Bun}_T\times 0}(\textrm{Bun}_T\times \textrm{Bun}_H)$. We are able to control this category by $\S 4.2$ above and geometric Langlands for $T$.

Namely, recall that the Fourier-Mukai-Laumon transform provides an equivalence $$\mathbb{L}_T: D(\textrm{Bun}_T)\simeq \textrm{QCoh}(\on{LocSys}_{\check{T}})$$
such that for each $\check{T}$-local system $\sigma$ on $X$, we get a $1$-dimensional local system $E_{\sigma}$ on $\textrm{Bun}_T$. Moreover, taking the fiber at $\sigma$ corresponds under $\mathbb{L}_T$ to the functor $$D(\textrm{Bun}_T)\rightarrow \textrm{Vect},\:\: \mathcal{F}\mapsto H_{\textrm{ren}-\textrm{dR}}^*(\textrm{Bun}_T,E_{\sigma}\overset{*}{\otimes} \mathcal{F}).$$

\subsubsection{Field extensions}

For a field extension $k'/k$ and a stack $\mathcal{Y}$, denote by $\mathcal{Y}'=\mathcal{Y}\underset{\textrm{Spec}(k)}{\times}\textrm{Spec}(k')$ the base-change of $\mathcal{Y}$ to $k'$. Similarly, for a $k$-linear DG category $\mathcal{C}$, denote by $\mathcal{C}':=\mathcal{C}\underset{\textrm{Vect}_k}{\otimes}\textrm{Vect}_{k'}.$

For a $\check{T}'$-local system $\sigma'$ on $X'$ (that is, a map $\textrm{Spec}(k')\rightarrow \textrm{LocSys}_{\check{T}'}(X')$), we denote by $E_{\sigma'}$ the corresponding local system on $\textrm{Bun}_{T'}(X')$. From Lemma \ref{l:top}, we get:

\begin{cor}\label{c:top}
Let $\mathcal{C}$ be a dualizable DG category and $\mathcal{F}$ an element of $D(\on{Bun}_T(X))\otimes \mathcal{C}$. If for all geometric points $\on{Spec}(k')\rightarrow \on{LocSys}_{\check{T}}(X)$, the image of $\mathcal{F}$ under the composition $$D(\on{Bun}_T(X))\underset{\on{Vect}_k}{\otimes} \mathcal{C}\rightarrow D(\on{Bun}_{T'}(X'))\underset{\on{Vect}_{k'}}{\otimes} \mathcal{C}'\xrightarrow{H_{\on{ren}-\on{dR}}^*(\on{Bun}_{T'}(X'), E_{\sigma'}\overset{*}{\otimes} -)\otimes \on{Id}}\mathcal{C}'$$
is zero, then $\mathcal{F}$ is also zero.

\end{cor}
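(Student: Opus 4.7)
The plan is to apply Lemma \ref{l:top} after translating the problem to the spectral side via the Fourier--Mukai--Laumon equivalence $\mathbb{L}_T : D(\on{Bun}_T(X)) \simeq \on{QCoh}(\on{LocSys}_{\check{T}}(X))$. Since $T$ is a torus, $\on{LocSys}_{\check{T}}(X)$ is a classical, smooth, locally almost of finite type stack, so it is in particular eventually coconnective and the hypotheses of Lemma \ref{l:top} are applicable with $\mathcal{Y} = \on{LocSys}_{\check{T}}(X)$.

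First, I would tensor $\mathbb{L}_T$ with the identity on $\mathcal{C}$ to obtain an equivalence
\[
\Phi \,:\, D(\on{Bun}_T(X))\underset{\on{Vect}_k}{\otimes}\mathcal{C} \,\simeq\, \on{QCoh}(\on{LocSys}_{\check{T}}(X))\underset{\on{Vect}_k}{\otimes}\mathcal{C},
\]
and set $\mathcal{F}' := \Phi(\mathcal{F})$. By Lemma \ref{l:top}, it suffices to check that $\mathcal{F}'$ vanishes at every $k'$-point $\sigma' : \on{Spec}(k') \to \on{LocSys}_{\check{T}}(X)$.

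Next I would identify the composition displayed in the statement with the fiber of $\mathcal{F}'$ at $\sigma'$, tensored with $\on{Id}_{\mathcal{C}'}$. Concretely, the base-change of the Fourier--Mukai equivalence along $k \hookrightarrow k'$ yields $\mathbb{L}_{T'} : D(\on{Bun}_{T'}(X')) \simeq \on{QCoh}(\on{LocSys}_{\check{T}'}(X'))$, compatibly with $\mathbb{L}_T$ and the pullback of $\sigma'$ through $\on{LocSys}_{\check{T}'}(X') \to \on{LocSys}_{\check{T}}(X)$. Under these identifications, taking the $!$-fiber at $\sigma'$ on the spectral side is precisely the functor $\mathcal{F} \mapsto H^*_{\on{ren}\text{-}\on{dR}}(\on{Bun}_{T'}(X'), E_{\sigma'} \overset{*}{\otimes} \mathcal{F})$ on the automorphic side, as recalled in \S 4.3.3. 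Tensoring everything with $\mathcal{C}$ (using dualizability to commute base-change with the equivalence), this is exactly the composition appearing in the hypothesis.

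Consequently, the assumption says that the fiber of $\mathcal{F}'$ at every geometric point of $\on{LocSys}_{\check{T}}(X)$ is zero. Applying Lemma \ref{l:top}, we conclude $\mathcal{F}' = 0$, and therefore $\mathcal{F} = 0$ by the equivalence $\Phi$. The only non-formal point is the base-change compatibility of $\mathbb{L}_T$ with the fiber functors, but this is built into the construction of the Fourier--Mukai--Laumon transform and its compatibility with extension of scalars; I would not expect any genuine obstacle beyond recording this compatibility.
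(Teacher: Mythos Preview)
Your proposal is correct and matches the paper's approach: the paper simply states that Corollary \ref{c:top} follows from Lemma \ref{l:top}, leaving implicit exactly the translation via $\mathbb{L}_T$ and the identification of fiber functors from \S4.3.3 that you have spelled out. The only minor imprecision is your reference to the ``$!$-fiber'' on the spectral side; Lemma \ref{l:top} concerns the ordinary ($*$-)pullback in $\on{QCoh}$, but since $\on{LocSys}_{\check{T}}$ is smooth this does not affect the vanishing conclusion.
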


\subsubsection{} 

Finally, let us note that quasi-temperedness plays well under base change (see also \cite[\S 10.3]{faergeman2022non}). Namely, for a field extension $k'/k$, we have an identification $$\textrm{Sph}_G\underset{\textrm{Vect}_k}{\otimes}\textrm{Vect}_{k'}\simeq \textrm{Sph}_{G'}$$ as $k'$-linear DG categories.

For each DG category $\mathcal{C}$ equipped with an action of $\textrm{Sph}_G$, it follows from the definition (\ref{eq:qtemp}) that we get an equivalence\footnote{Note that there is nothing special about quasi-temperedness here. We have analogues equivalences when considering e.g. maximal anti-temperedness or temperedness.} $$\mathcal{C}^{\textrm{quasi-G-temp}}\underset{\textrm{Vect}_k}{\otimes}\textrm{Vect}_{k'}\simeq (\mathcal{C}')^{\textrm{quasi-G}'\textrm{-temp}}.$$

\subsubsection{} Let us finish the proof of Theorem A.

\begin{thm}\label{t:A3}
We have an inclusion $$D_{\mathfrak{z}(\mathfrak{g})}(\on{Bun}_G)\subset D(\on{Bun}_G)^{\on{max-anti-temp}}.$$
\end{thm}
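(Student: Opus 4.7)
The plan is to reduce to the semisimple case (Theorem~\ref{t:A2}) via pullback along the central isogeny $Z(G)^\circ \times [G,G] \to G$, and then to detect vanishing fiberwise over $\on{LocSys}_{\check{T}}$ by means of Corollary~\ref{c:top}. Let $\cF \in D_{\fz(\fg)}(\on{Bun}_G)$ and set $\widetilde{\cF} := \on{quasi-temp}(\cF)$; since quasi-temperization is convolution with a fixed object of $\on{Sph}_G$ and the Hecke action preserves singular support, $\widetilde{\cF}$ still lies in $D_{\fz(\fg)}(\on{Bun}_G)$ and is quasi-tempered by definition. The goal is to show $\widetilde{\cF} = 0$. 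Put $G' := Z(G)^\circ \times [G,G]$, write $T := Z(G)^\circ$, $H := [G,G]$, and let $f: \on{Bun}_{G'} \to \on{Bun}_G$ be the smooth map induced by the isogeny; under the identification $\fg' = \fz(\fg) \oplus [\fg,\fg] \cong \fg$, Lemma~\ref{l:!} gives $f^!(\widetilde{\cF}) \in D_{\fz(\fg)}(\on{Bun}_{G'}) = D_{\ft \times 0}(\on{Bun}_T \times \on{Bun}_H)$, and Lemma~\ref{l:key} gives that $f^!(\widetilde{\cF})$ is quasi-$G'$-tempered. As at the end of the proof of Theorem~\ref{t:A1}, it suffices to establish $f^!(\widetilde{\cF}) = 0$: since $f$ is a smooth surjection onto a union of connected components of $\on{Bun}_G$, acting by a Hecke functor $V^{\check{\lambda}}$ for a suitable dominant coweight $\check{\lambda}$ moves the support of $\widetilde{\cF}$ into $\Im(f)$ while preserving both the singular-support condition and quasi-temperedness, and $\widetilde{\cF}$ is subsequently recovered as a direct summand of $V^{-w_0(\check{\lambda})} \star V^{\check{\lambda}} \star \widetilde{\cF}$.

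Write $\cG := f^!(\widetilde{\cF})$. By Corollary~\ref{c:top}, the vanishing $\cG = 0$ will follow from the vanishing of the fiber $\cG_\sigma \in D(\on{Bun}_{H'})$ for every geometric point $\sigma \in \on{LocSys}_{\check{T}}$. By Remark~\ref{r:product}, quasi-$G'$-temperedness of $\cG$ is equivalent to $\cG \in D(\on{Bun}_T) \otimes D(\on{Bun}_H)^{\on{quasi-H-temp}}$, and this property is preserved by the fiber functor, which acts as the identity on the $D(\on{Bun}_H)$-tensorand; hence $\cG_\sigma$ is quasi-$H'$-tempered. Separately, I claim $\cG_\sigma \in D_0(\on{Bun}_{H'})$. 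Since $E_\sigma$ is a rank-one local system on $\on{Bun}_{T'}$, it has zero singular support, so Lemma~\ref{l: tensor} gives that $E_\sigma \overset{*}{\otimes} \cG$ still has singular support in $T^*\on{Bun}_T \times 0$; the fiber $\cG_\sigma$ is then obtained by integrating this tensor along $\on{Bun}_{T'}$, and the vanishing of singular support in the $\on{Bun}_H$-direction means the sheaf is lisse in the $\on{Bun}_H$-direction, a property that is inherited by the fiber. Invoking Theorem~\ref{t:A2}, which identifies $D_0(\on{Bun}_{H'})$ with the kernel of $\on{quasi-temp}$, one concludes $\cG_\sigma = 0$.

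The main obstacle is the singular-support claim $\cG_\sigma \in D_0(\on{Bun}_{H'})$ --- essentially the special case of Theorem~\ref{t:B} for $G = T$ a torus and $H$ semisimple. It requires careful handling of the (non-proper) pushforward along $\on{Bun}_T \times \on{Bun}_H \to \on{Bun}_H$: one can decompose $\on{Bun}_T$ into connected components (each being, up to the gerby $B\bG_m^{\rk(T)}$-factor, a product of Jacobians of $X$), invoke renormalized pushforward, and use Lemma~\ref{l: tensor} to propagate the singular-support condition through the tensor with $E_\sigma$.
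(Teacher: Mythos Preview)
Your proposal is correct and follows essentially the same route as the paper: reduce to $G'=Z(G)^\circ\times[G,G]$ via Lemma~\ref{l:key} and the Hecke-translation trick, then on $\on{Bun}_T\times\on{Bun}_H$ kill $\widetilde{\cF}$ fiberwise over $\on{LocSys}_{\check T}$ using Corollary~\ref{c:top}, Remark~\ref{r:product}, Lemma~\ref{l: tensor}, and Theorem~\ref{t:A2}. The one point you flag as ``the main obstacle'' is exactly what the paper isolates as Lemma~\ref{l:ss} (that $p_{*,\on{ren-dR}}$ along $\on{Bun}_T\times\cY\to\cY$ sends $D_{T^*\on{Bun}_T\times\cN}$ to $D_{\cN}$), proved by the decomposition $\on{Bun}_{\bG_m}\simeq\bZ\times\bB\bG_m\times\on{Jac}$ you sketch; your outline matches that argument.
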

\begin{proof}

\emph{Step 1.} Let us prove the assertion when $G\simeq T\times H$ is the product of a torus and a semisimple algebraic group. We will reduce to this case in Step 2 below.

In this case, we have $$D_{\mathfrak{z}(\mathfrak{g})}(\textrm{Bun}_G)\simeq D_{T^*\textrm{Bun}_T\times 0}(\textrm{Bun}_T\times \textrm{Bun}_H).$$ 

\noindent Thus, let $\mathcal{F}\in D_{T^*\textrm{Bun}_T\times 0}(\textrm{Bun}_T\times \textrm{Bun}_H)$ and write $\tilde{\mathcal{F}}=\textrm{quasi-temp}(\mathcal{F})$. We need to show that $\tilde{\mathcal{F}}=0$. By Corollary \ref{c:top}, it suffices to show that for each $k'$-linear $\check{T}'$-local system on $X'$, $\sigma': \textrm{Spec}(k')\rightarrow \textrm{LocSys}_{\check{T}}$, the composition $$D(\textrm{Bun}_T)\underset{\textrm{Vect}_k}{\otimes} D(\textrm{Bun}_H)\rightarrow$$ $$\rightarrow D(\textrm{Bun}_{T'}(X'))\underset{\textrm{Vect}_{k'}}{\otimes}D(\textrm{Bun}_{H'}(X')) \xrightarrow{H_{\textrm{ren}-\textrm{dR}}^*(\textrm{Bun}_{T'}(X'), E_{\sigma'}\overset{*}{\otimes} -)\otimes \textrm{Id}} D(\textrm{Bun}_{H'}(X'))$$ kills $\tilde{\mathcal{F}}$.

First, observe that the sheaf $\tilde{\mathcal{F}}$ still lies in $D_{T^*\textrm{Bun}_T\times 0}(\textrm{Bun}_T\times \textrm{Bun}_H)$. As previously noted, this follows from the fact that quasi-temp is given by a Hecke functor at $x$. Next, note that the base-change functor
\begin{equation}\label{eq:bc}
D(\textrm{Bun}_T\times \textrm{Bun}_H)\rightarrow D(\textrm{Bun}_{T'}(X')\times \textrm{Bun}_{H'}(X'))
\end{equation}
restricts to a functor 
\begin{equation}
D_{T^*\textrm{Bun}_T\times 0}(\textrm{Bun}_T\times\textrm{Bun}_H)\rightarrow D_{T^*\textrm{Bun}_{T'}(X')\times 0}(\textrm{Bun}_{T'}(X')\times\textrm{Bun}_{H'}(X')).
\end{equation}

The functor (\ref{eq:bc}) sends $\tilde{\mathcal{F}}$ to a $(T'\times H')$-quasi-tempered object by $\S 4.3.4$ above. Combining these assertions, we see that in the composition above, we only need to consider $\check{T}$-local systems $\sigma$ defined over $k$.

Thus, let $\sigma: \textrm{Spec}(k)\rightarrow \textrm{LocSys}_{\check{T}}$ be a $\check{T}$-local system on $X$. Denote by $p$ the projection $\textrm{Bun}_T\times \textrm{Bun}_H\rightarrow \textrm{Bun}_H$. We need to show that $$p_{*,\textrm{ren}-\textrm{dR}}((E_{\sigma}\boxtimes \underline{k}_{\textrm{Bun}_H})\overset{*}{\otimes}\tilde{\mathcal{F}})=0.$$

\noindent By Lemma \ref{l: tensor} above and Lemma \ref{l:ss} below, we see that this sheaf has singular support contained in the zero section of $\textrm{Bun}_H$. That is, it is maximally anti-tempered by Theorem A for the group $H$. On the other hand, we noted in Remark \ref{r:product} that 
$$D(\textrm{Bun}_T\times \textrm{Bun}_H)^{\textrm{quasi-T}\times \textrm{H-temp}}\simeq D(\textrm{Bun}_T)\otimes D(\textrm{Bun}_H)^{\textrm{quasi-H-temp}}.$$

\noindent Thus, $p_{*,\textrm{ren}-\textrm{dR}}((E_{\sigma}\boxtimes k_{\textrm{Bun}_H})\overset{*}{\otimes}\tilde{\mathcal{F}})$ is quasi-tempered as well. Hence it is zero.\\

\emph{Step 2.} Let us conclude Theorem A from Step 1. Let $G$ be an arbitrary reductive group. We mimic the proof of Theorem \ref{t:A2}, step 2. Namely, consider the central isogeny $Z(G)^{\circ}\times [G,G]\rightarrow G$ inducing a smooth map $$\alpha_G: \textrm{Bun}_{Z(G)^{\circ}\times [G,G]}\rightarrow \textrm{Bun}_G$$ realizing the image as a union of connected components $\textrm{Bun}_G$.

Choose a dominant coweight $\check{\lambda}$ such that $V^{\check{\lambda}}\star \mathcal{\tilde{F}}$ is not supported away from $\textrm{Im}(\alpha_G)$. By Lemma \ref{l:key}, $\alpha_G^!(V^{\check{\lambda}}\star \mathcal{\tilde{F}})$ is quasi-tempered.

On the other hand, $\alpha_G^!(V^{\check{\lambda}}\star \mathcal{\tilde{F}})$ lies in $D_{T^*\textrm{Bun}_{Z(G)^{\circ}}\times 0}(\textrm{Bun}_{Z(G)^{\circ}}\times \textrm{Bun}_{[G,G]})$ and so is maximally anti-tempered by the above. Thus $\alpha_G^!(V^{\check{\lambda}}\star \mathcal{\tilde{F}})=0$, and since $V^{\check{\lambda}}\star \mathcal{\tilde{F}}$ is not supported away from $\textrm{Im}(\alpha_G)$, this forces $V^{\check{\lambda}}\star \mathcal{\tilde{F}}=0$. This, in turn, implies that $\mathcal{\tilde{F}}=0$.
\end{proof}

\subsubsection{} We used the following lemma in Step 1 above.

\begin{lem}\label{l:ss}

Let $\mathcal{Y}$ be a smooth algebraic stack and denote by $p$ the projection $\on{Bun}_T\times \mathcal{Y}\rightarrow \mathcal{Y}$. Then for any Zariski-closed conical $\mathcal{N}\subset T^*\mathcal{Y}$, the functor $p_{*,\on{ren}-\on{dR}}: D(\on{Bun}_T\times \mathcal{Y})\rightarrow D(\mathcal{Y})$ restricts to a functor $$p_{*,\on{ren}-\on{dR}}:D_{T^*\on{Bun}_T\times \mathcal{N}}(\on{Bun}_T\times \mathcal{Y})\rightarrow D_{\mathcal{N}}(\mathcal{Y}).$$
\end{lem}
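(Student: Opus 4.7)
\emph{Plan.} I would reduce the statement to a smooth scheme target via smooth base change, then use a product decomposition of $\on{Bun}_T$ to handle the abelian-variety and $B\bG_m^r$ factors separately, via Lemma \ref{l:*} and a direct computation respectively.

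First step: For any smooth atlas $f\colon S \to \mathcal{Y}$ by a finite-type scheme, smooth base change for the renormalized de Rham pushforward yields $f^! \circ p_{*,\on{ren-dR}} \simeq q_{*,\on{ren-dR}} \circ \tilde{f}^!$, where $\tilde{f} \colon \on{Bun}_T \times S \to \on{Bun}_T \times \mathcal{Y}$ and $q \colon \on{Bun}_T \times S \to S$ are the evident base changes. Combined with Lemma \ref{l:!} (which shows that $\tilde{f}^!$ carries $D_{T^*\on{Bun}_T \times \mathcal{N}}(\on{Bun}_T \times \mathcal{Y})$ into $D_{T^*\on{Bun}_T \times \mathcal{N}_S}(\on{Bun}_T \times S)$), this reduces the claim to the case where $\mathcal{Y} = S$ is a smooth finite-type scheme.

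Second step: I exploit the (non-canonical) decomposition $\on{Bun}_T \simeq \bigsqcup_{\mu \in X_*(T)} A_\mu \times B\bG_m^r$, where $r = \on{rk}(T)$ and each $A_\mu$ is a smooth proper abelian variety (a power of $\on{Pic}^0(X)$). Since renormalized pushforward and the singular support condition commute with direct sums, it suffices to handle a single component and factor the projection as
\[
A \times B\bG_m^r \times S \xrightarrow{\pi_A} B\bG_m^r \times S \xrightarrow{\pi_B} S.
\]
The map $\pi_A$ is schematic proper, so Lemma \ref{l:*} applies; a direct codifferential calculation gives $(d\pi_A^\bullet)^{-1}(T^*A \times T^*B\bG_m^r \times \mathcal{N}_S) = A \times T^*B\bG_m^r \times \mathcal{N}_S$, whose image under the projection to $T^*(B\bG_m^r \times S)$ is exactly $T^*B\bG_m^r \times \mathcal{N}_S$, as required.

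Third step (main obstacle): For $\pi_B$ the renormalization is essential, since $B\bG_m^r$ is not schematic over $\on{pt}$ and Lemma \ref{l:*} does not directly apply. Here I would use the Künneth identification $D(B\bG_m^r \times S) \simeq D(B\bG_m^r) \otimes D(S)$ together with the decomposition $D(B\bG_m^r) \simeq \Rep(\bG_m^r) \simeq \bigoplus_{n \in \bZ^r} \Vect$ into isotypic components (by linear reductivity of $\bG_m^r$). Every object of $D(B\bG_m^r \times S)$ then writes as $\bigoplus_n \mathcal{G}_n$ with $\mathcal{G}_n \in D(S)$; pulling back along the smooth cover $S \to B\bG_m^r \times S$ and applying Lemma \ref{l:!} shows that the condition of lying in $D_{T^*B\bG_m^r \times \mathcal{N}_S}(B\bG_m^r \times S)$ is equivalent to the condition that each $\mathcal{G}_n \in D_{\mathcal{N}_S}(S)$. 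Finally, $\pi_{B,*,\on{ren-dR}}$ acts on each graded piece by tensoring $\mathcal{G}_n$ with a fixed complex of vector spaces (the ren-dR cohomology of $B\bG_m^r$ with coefficients in $\chi^n$), which manifestly preserves the SS condition; composing with the outcome of the second step yields the desired conclusion for $p_{*,\on{ren-dR}}$.
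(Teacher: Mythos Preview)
Your overall strategy---decompose $\on{Bun}_T$ into its lattice, $B\bG_m^r$, and Jacobian factors, then handle the proper piece via Lemma~\ref{l:*}---matches the paper's. The first two steps are fine. The third step, however, rests on a false identification: $D(B\bG_m^r)$ is \emph{not} equivalent to $\Rep(\bG_m^r)$. What you have written is the description of $\QCoh(B\bG_m^r)$, not of D-modules. In fact $D(B\bG_m)\simeq H_*(\bG_m)\mod$ (see the footnote in Remark~\ref{r:act}), which is modules over an exterior algebra on a single odd generator; this category is not semisimple and admits no isotypic decomposition indexed by characters. Consequently the description ``every object writes as $\bigoplus_n \cG_n$ with $\cG_n\in D(S)$'' and the ensuing computation of $\pi_{B,*,\on{ren-dR}}$ as tensoring graded pieces by fixed vector spaces both collapse.

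The paper circumvents a direct analysis of pushforward along $B\bG_m\times\cY\to\cY$ by a cover trick. It first trades $p_{*,\on{ren-dR}}$ for $p_!$ (they differ by a shift), then considers the smooth cover $q\colon \on{Jac}\times\cY\to B\bG_m\times\on{Jac}\times\cY$. One checks that $q_!$ preserves the relevant singular-support conditions because $q^!q_!$ is tensoring by $H_*(\bG_m)$, and that the restricted $q_!$ generates its target (its right adjoint $q^!$ is conservative). Since $\pi_{B\bG_m}\circ q=\id$, it follows immediately that $(\pi_{B\bG_m})_!$ preserves the singular-support conditions; the remaining Jacobian pushforward is then handled by Lemma~\ref{l:*} exactly as in your second step. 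You can salvage your argument by replacing the incorrect isotypic decomposition with this generation-by-$q_!$ argument.
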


\begin{proof}
For notational purposes, we will assume $T=\bG_m$; this is clearly harmless.

Write $\textrm{Bun}_{\mathbb{G}_m}\simeq \mathbb{Z}\times \mathbb{B}\mathbb{G}_m\times \textrm{Jac}$, where $\textrm{Jac}$ is the Jacobian variety of $X$. Note that the functor
$$p_!: D(\on{Bun}_{\bG_m}\times \cY)\simeq D(\on{Bun}_{\bG_m})\otimes D(\cY)\to D(\cY)$$ 
is well-defined. Moreover, since $p_!$ and $p_{*,\on{ren-dR}}$ differ by a cohomological shift (\cite[\S 9.1.6]{drinfeld2013some}), we see that it suffices to show the lemma for the functor $p_!$.

From the smooth cover $\on{pt}\to \bB\bG_m$, we get an induced cover
$$q: \on{Jac}\times \cY\to \bB\bG_m\times \on{Jac}\times \cY.$$
Let us start by showing that $q_!: D(\on{Jac}\times \cY)\to D(\bB\bG_m\times \on{Jac}\times \cY)$ restricts to a functor
\begin{equation}\label{eq:qq}
D_{T^*\on{Jac}\times \cN}(\on{Jac}\times \cY)\to D_{T^*(\bB\bG_m\times \on{Jac})\times \cN}(\bB\bG_m\times \on{Jac}\times \cY).
\end{equation}

\noindent By Lemma \ref{l:!}, it suffices to show that $q^!q_!$ takes $D_{T^*\on{Jac}\times \cN}(\on{Jac}\times \cY)$ to itself. However, by smooth base change, $q^!q_!$ simply tensors by the vector space $H_*(\bG_m)$, and so (\ref{eq:qq}) is clear.

Next, note that the functor (\ref{eq:qq}) generates the target under colimits, since its right adjoint is conservative. Denoting by $\pi_{\bB\bG_m}$ the projection $\bB\bG_m\times \on{Jac}\times \cY\to \on{Jac}\times \cY$, we see that ${\pi_{\bB\bG_m}}_!$ restricts to a functor
$${\pi_{\bB\bG_m}}_!: D_{T^*(\bB\bG_m\times \on{Jac})\times \cN}(\bB\bG_m\times \on{Jac}\times \cY)\to D_{T^*\on{Jac}\times \cN}(\on{Jac}\times \cY).$$

Finally, writing
$$D_{T^*\on{Bun}_{\bG_m}\times \cN}(\on{Bun}_{\bG_m}\times \cY)\simeq D(\bZ)\otimes D_{T^*(\bB\bG_m\times \on{Jac})\times \cN}(\bB\bG_m\times \on{Jac}\times \cY),$$

\noindent we see that the proof follows from Lemma \ref{l:*}, using that $\on{Jac}$ is projective.

\end{proof}

\begin{rem}\label{r:act}
In fact the above proof shows that for any algebraic stack $\cY$, the fully faithful functor $$D(\bB\bG_m)\otimes D_{\cN}(\cY)\to D_{T^*\bB\bG_m\times \cN}(\bB\bG_m\times \cY)$$ is an equivalence. Indeed the proof shows that taking $*$-pushforward along the projection 
$\bB\bG_m\times \cY\to \cY$
restricts to a functor 
\begin{equation}\label{eq:rest}
D_{T^*\bB\bG_m\times \cN}(\bB\bG_m\times \cY)\to D_{\cN}(\cY).
\end{equation}

\noindent Since $D(\bB\bG_m)$ is generated by the dualizing sheaf under colimits,\footnote{This follows from writing $D(\bB\bG_m)\simeq H_*(G)\on{-mod}$ by the Barr-Beck-Lurie theorem, where $\omega_{\bB\bG_m}$ corresponds to the augmentation module $k$ (see e.g \cite[\S 2.4.4]{beraldo2021tempered}).} we see that the functor (\ref{eq:rest}) is conservative. Hence its left adjoint generates $D_{T^*\bB\bG_m\times \cN}(\bB\bG_m\times \cY)$ under colimits. However, this left adjoint clearly factors through $D(\bB\bG_m)\otimes D_{\cN}(\cY)$.
\end{rem}

\subsection{Proof of Theorem B}

In this subsection we prove Theorem B. That is, we prove that the functors
\begin{equation}\label{eq:z}
D_{\mathfrak{z}(\mathfrak{g})}(\textrm{Bun}_G)\otimes D_{\mathfrak{z}(\mathfrak{h})}(\textrm{Bun}_H)\xrightarrow{-\boxtimes -} D_{\mathfrak{z}(\mathfrak{g})\times \mathfrak{z}(\mathfrak{h})}(\textrm{Bun}_G\times \textrm{Bun}_H)
\end{equation}
\begin{equation}\label{eq:0}
D_0(\textrm{Bun}_G)\otimes D_0(\textrm{Bun}_H)\xrightarrow{-\boxtimes -} D_0(\textrm{Bun}_G\times \textrm{Bun}_H)
\end{equation}
are equivalences. Standard arguments show that if $\mathcal{C}_0\hookrightarrow \mathcal{C}$ is a fully faithful embedding of DG categories and $\mathcal{D}$ is a dualizable DG category, then then canonical functor $$\mathcal{C}_0\otimes \mathcal{D}\rightarrow \mathcal{C}\otimes \mathcal{D}$$
is also fully faithful. In particular, using that the functor $$D(\textrm{Bun}_G)\otimes D(\textrm{Bun}_H)\xrightarrow{-\boxtimes -}D(\textrm{Bun}_G\times \textrm{Bun}_H)$$ is an equivalence, we see that if $D_{\mathfrak{z}(\mathfrak{g})}(\textrm{Bun}_G)$ (resp.\:$D_0(\textrm{Bun}_G)$) is dualizable as a DG category, then the functor (\ref{eq:z}) (resp. (\ref{eq:0})) is fully faithful. However, the dualizability of $D_{\mathfrak{z}(\mathfrak{g})}(\textrm{Bun}_G)$ (resp. $D_0(\textrm{Bun}_G)$) follows from Theorem A (resp. Lemma \ref{l:dual} below).

Note that essential surjectivity of (\ref{eq:z}) follows immediately from Theorem A and Proposition \ref{p:funct} (b) (again, taking $\cC=D(\on{Bun}_H^{\infty\cdot x}), \cD=D(\on{Bun}_G^{\infty\cdot x})$). Thus, it remains to prove that the functor (\ref{eq:0}) is essentially surjective. To do this, we will need the following theorem proved in \cite{arinkin2020stack}:

\begin{thm}\label{t:ab}

Let $A$ be an abelian variety, and let $\mathcal{Y}$ be a smooth algebraic stack. Then the functor $$D_0(A)\otimes D_0(\mathcal{Y})\rightarrow D_0(A\times \mathcal{Y})$$ is an equivalence.

\end{thm}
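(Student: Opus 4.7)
The plan is to use the Fourier--Mukai--Laumon transform for abelian varieties, which provides an equivalence $D_0(A)\simeq \QCoh(A^{\nat})$, where $A^{\nat}$ denotes the universal vector extension of the dual abelian variety $A^{\vee}$ (equivalently, the moduli space of rank one de Rham local systems on $A$). This gives a concrete algebro-geometric handle on $D_0(A)$ that goes beyond the general toy-model theorem of \cite{gaitsgory2022toy}, which a priori only yields the equivalence up to left completion.

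Fully faithfulness of the functor $D_0(A)\otimes D_0(\cY)\to D_0(A\times \cY)$ is then formal: since $D_0(A)\simeq \QCoh(A^{\nat})$ is dualizable and the inclusion $D_0(A)\hookrightarrow D(A)$ is fully faithful, the induced map $D_0(A)\otimes D_0(\cY)\to D(A)\otimes D(\cY)\simeq D(A\times \cY)$ is fully faithful, and it factors through $D_0(A\times \cY)$ by a variant of Lemma \ref{l: tensor} for external products (using that the external product of zero singular support sheaves has zero singular support).

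The heart of the argument is essential surjectivity, which I would handle by promoting the Fourier--Mukai transform to a families version. Let $\cP$ denote the universal line bundle with flat connection on $A\times A^{\nat}$, and denote by $p_A, p_{A^{\nat}}, p_{\cY}$ the three projections from $A\times A^{\nat}\times \cY$. To any $\cF\in D_0(A\times \cY)$, I associate the transform
$$
\widehat{\cF}:=(p_{A^{\nat}}\times p_{\cY})_{*,\dR}\bigl(p_A^!(\cP)\overset{!}{\otimes} (p_A\times p_{\cY})^!(\cF)\bigr).
$$
The key observation is that $\widehat{\cF}$ lies in $\QCoh(A^{\nat})\otimes D_0(\cY)\subset D(A^{\nat}\times \cY)$: the fact that $\cF$ has zero singular support along $A$, combined with the relative flatness of $\cP$ along $A$, forces $\widehat{\cF}$ to have zero singular support along $A^{\nat}$ (via successive applications of Lemmas \ref{l:!}, \ref{l:*}, and \ref{l: tensor}). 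Inverting the families transform by the standard Fourier--Mukai reconstruction formula then realizes $\cF$ as the image of $\widehat{\cF}$ under $D_0(A)\otimes D_0(\cY)\to D_0(A\times \cY)$, proving essential surjectivity.

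The main obstacle will be controlling the left-completion issue flagged in the footnote accompanying the statement of the toy-model theorem: one needs to verify that $D_0(A)\otimes D_0(\cY)$ is itself left-complete in its natural t-structure, so that the limit-of-truncations arguments implicit above pass through cleanly. For an abelian variety this reduces to the fact that $\QCoh(A^{\nat})$ is compactly generated by objects of bounded tor-amplitude (with $A^{\nat}$ being a smooth scheme), which is the feature distinguishing the abelian-variety case from the general smooth-and-proper one and what ultimately removes the left-completion caveat.
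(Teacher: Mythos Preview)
The paper does not give its own proof of this statement: it simply cites Theorem F.9.7, Proposition E.4.4, and \S 23.3.2 of \cite{arinkin2020stack}, which establish the result within the general framework of ``duality-adapted'' categories (see Remark \ref{r:da}). So there is no detailed argument here to compare against; the content is deferred entirely to the cited reference.

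Your Fourier--Mukai approach is a natural idea, but its starting point is incorrect. The Laumon--Rothstein transform gives an equivalence $D(A)\simeq \QCoh(A^{\nat})$ for the \emph{full} category of D-modules on $A$, not for $D_0(A)$. Under this equivalence a rank one local system $E_\sigma$ corresponds to the skyscraper $k_\sigma$, so $D_0(A)$ matches the full subcategory of $\QCoh(A^{\nat})$ generated under colimits by skyscrapers, which is a \emph{proper} subcategory: for instance $\cO_{A^{\nat}}$ corresponds to the delta D-module $\delta_e\in D(A)\setminus D_0(A)$. One can also see this by comparing endomorphism algebras of compact objects: any compact object of $D_0(A)$ has finite-dimensional endomorphisms, whereas $\cO_{A^{\nat}}\in\QCoh(A^{\nat})^c$ does not. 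Thus the identification $D_0(A)\simeq \QCoh(A^{\nat})$ on which your entire argument rests is false.

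There are also secondary problems even if one tries to repair the setup. The inclusion $\QCoh(A^{\nat})\otimes D_0(\cY)\subset D(A^{\nat}\times\cY)$ does not typecheck, since the left-hand side is quasi-coherent along $A^{\nat}$ while the right-hand side consists of D-modules. And your justification that $\widehat{\cF}$ has ``zero singular support along $A^{\nat}$'' is misdirected: interpreting the transform correctly as an object of $\QCoh(A^{\nat})\otimes D(\cY)$, what you actually need is a constraint on singular support along $\cY$, and Lemmas \ref{l:!}, \ref{l:*}, \ref{l: tensor} do not directly yield this once you unwind what the families transform is doing.
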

\begin{proof}
This follows from combining Theorem F. 9.7, Proposition E.4.4 and $\S$23.3.2 in \cite{arinkin2020stack}.
\end{proof}

\begin{rem}\label{r:da}
Theorem \ref{t:ab} is in fact a very special case of the results cited above. The assertion of the theorem holds more generally whenever $A$ is a smooth and proper variety and $D_0(A)$ is "duality-adapted" (see \cite[\S E.4]{arinkin2020stack}).
\end{rem}

\subsubsection{} 

\emph{Proof of Theorem B.} Let $G$ and $H$ be two reductive groups. We need to prove that (\ref{eq:0}) is essentially surjective. We proceed in four steps. Namely, we will: 
\begin{enumerate}[label=\alph*]
\item[(a)] Consider the case when $G$ and $H$ are both semisimple.
\item[(b)] Consider the case when $G$ and $H$ are both given by the product of a torus and a semisimple group.
\item[(c)] Consider the case when $G$ is arbitrary and $H$ is semisimple.
\item[(d)] Consider the general case.
\end{enumerate}

\emph{proof of (a):} Assume $G$ and $H$ are both semisimple. Then $\mathfrak{z}(\mathfrak{g})$ and $\mathfrak{z}(\mathfrak{h})$ are trivial, and so (\ref{eq:0}) is equivalent to (\ref{eq:z}).

\emph{proof of (b):} Assume now that we may write $G\simeq T_G\times [G,G]$, $H\simeq T_H\times [H,H]$ where $T_G$ and $T_H$ are tori. Recall that for $T$ a torus, we have $$\textrm{Bun}_T=\big(\mathbb{Z}\times \mathbb{B}\mathbb{G}_m\times \textrm{Jac}\big)^{\times \textrm{rk}(T)}.$$

\noindent Thus, for $\mathcal{Y}$ a smooth algebraic stack, it follows from Theorem \ref{t:ab} that we may write $$D_0(\textrm{Bun}_T\times \mathcal{Y})\simeq D_0(\textrm{Jac}^{\textrm{rk}(T)})\otimes D_0(\mathbb{Z}^{\textrm{rk}(T)}\times \mathbb{B}\mathbb{G}_m^{\textrm{rk}(T)}\times \mathcal{Y}).$$

\noindent Moreover, by Remark \ref{r:act}, we may further rewrite this as
$$D_0(\textrm{Jac}^{\textrm{rk}(T)})\otimes D(\mathbb{Z}^{ \textrm{rk}(T)}\times \mathbb{B}\mathbb{G}_m^{\textrm{rk}(T)})\otimes D_0(\mathcal{Y})\simeq D_0(\textrm{Bun}_T)\otimes D_0(\mathcal{Y}).$$

\noindent This observation combined with (a) yields: $$D_0(\textrm{Bun}_G\times \textrm{Bun}_H)\simeq D_0(\textrm{Bun}_{T_G}\times \textrm{Bun}_{[G,G]}\times \textrm{Bun}_{T_H}\times \textrm{Bun}_{[H,H]})$$ $$\simeq D_0(\textrm{Bun}_{T_G}\times \textrm{Bun}_{[G,G]})\otimes D_0(\textrm{Bun}_{T_H}\times \textrm{Bun}_{[H,H]})\simeq D_0(\textrm{Bun}_G)\otimes D_0(\textrm{Bun}_H).$$

\emph{proof of (c):} Now let $G$ be an arbitrary reductive group and let $H$ be semisimple. Note that from (\ref{eq:z}) above, we have
\begin{equation}\label{eq:inc1}
D_0(\textrm{Bun}_G\times \textrm{Bun}_H)\subset D_{\mathfrak{z}(\mathfrak{g})\times 0}(\textrm{Bun}_G\times \textrm{Bun}_H)\simeq D_{\mathfrak{z}(\mathfrak{g})}(\textrm{Bun}_G)\otimes D_0(\textrm{Bun}_H).
\end{equation}

\noindent Observe that $D_0(\textrm{Bun}_H)=D(\textrm{Bun}_H)^{\textrm{max-anti-temp}}$ is compactly generated and hence dualizable. We want to show that the inclusion (\ref{eq:inc1}) factors through $$D_0(\textrm{Bun}_G)\otimes D_0(\textrm{Bun}_H)\hookrightarrow D_{\mathfrak{z}(\mathfrak{g})}(\textrm{Bun}_G)\otimes D_0(\textrm{Bun}_H).\footnote{Note that this functor is indeed fully faithful since $D_0(\textrm{Bun}_H)$ is dualizable by Theorem A.}$$ 

\noindent It suffices to show that for each $\lambda\in D_0(\textrm{Bun}_H)^{\vee}$, the composition
\begin{equation}\label{eq:comp1}
D_0(\textrm{Bun}_G\times \textrm{Bun}_H)\hookrightarrow D_{\mathfrak{z}(\mathfrak{g})}(\textrm{Bun}_G)\otimes D_0(\textrm{Bun}_H)\xrightarrow{\textrm{Id}\otimes \lambda}D_{\mathfrak{z}(\mathfrak{g})}(\textrm{Bun}_G)
\end{equation}
lands in $D_0(\textrm{Bun}_G)$.

Consider the central isogeny $Z(G)^{\circ}\times [G,G]\rightarrow G$ and the induced map of moduli stacks $\alpha_G: \textrm{Bun}_{Z(G)^{\circ}\times [G,G]}\rightarrow \textrm{Bun}_G$. This map smoothly surjects onto a union of connected components of $\on{Bun}_G$.

We have a commutative diagram
\[\begin{tikzcd}
	{D_0(\textrm{Bun}_G\times\textrm{Bun}_H)} && {D_0(\textrm{Bun}_{Z(G)^{\circ}\times [G,G]})\otimes D_0(\textrm{Bun}_H)} \\
	\\
	{D_{\mathfrak{z}(\mathfrak{g})}(\textrm{Bun}_G)\otimes D_0(\textrm{Bun}_H)} && {D(\textrm{Bun}_{Z(G)^{\circ}\times [G,G]})\otimes D_0(\textrm{Bun}_H)} \\
	\\
	{D_{\mathfrak{z}(\mathfrak{g})}(\textrm{Bun}_G)} && {D(\textrm{Bun}_{Z(G)^{\circ}\times [G,G]}).}
	\arrow["{(\alpha_G\times\textrm{Id})^!}", from=1-1, to=1-3]
	\arrow[hook, from=1-3, to=3-3]
	\arrow[hook, from=1-1, to=3-1]
	\arrow["{\alpha_G^!\otimes\textrm{Id}}", from=3-1, to=3-3]
	\arrow["{\textrm{Id}\otimes\lambda}"', from=3-1, to=5-1]
	\arrow["{\alpha_G^!}", from=5-1, to=5-3]
	\arrow["{\textrm{Id}\otimes\lambda}", from=3-3, to=5-3]
\end{tikzcd}\]

\noindent We have used (b) to rewrite the upper right corner. The diagram shows that the composition $$D_0(\textrm{Bun}_G\times \textrm{Bun}_H)\hookrightarrow D_{\mathfrak{z}(\mathfrak{g})}(\textrm{Bun}_G)\otimes D_0(\textrm{Bun}_H)\xrightarrow{\textrm{Id}\otimes \lambda}D_{\mathfrak{z}(\mathfrak{g})}(\textrm{Bun}_G) \xrightarrow{\alpha_G^!} D(\textrm{Bun}_{Z(G)^{\circ}\times [G,G]})$$ lands in $D_0(\textrm{Bun}_{Z(G)^{\circ}\times [G,G]})$.

Now let $\mathcal{F}\in D_0(\textrm{Bun}_G\times \textrm{Bun}_H)\subset D_{\mathfrak{z}(\mathfrak{g})}(\textrm{Bun}_G)\otimes D(\textrm{Bun}_H)$, and assume that $\cF$ is supported on the connected components coming from the image of $(\alpha_G\times \on{Id}): \on{Bun}_{Z(G)^{\circ}\times [G,G]}\times \on{Bun}_H\to \on{Bun}_G\times \on{Bun}_H$. Then $(\on{Id}\otimes \lambda)(\cF)$ is supported on $\on{Im}(\alpha_G)$, and since $\alpha_G^!((\on{Id}\otimes \lambda)(\cF)$ is an object of $D_0(\on{Bun}_{Z(G)^{\circ}\times [G,G]})$, we see that $(\on{id}\otimes\lambda)(\cF)\in D_0(\on{Bun}_G)$, by Lemma \ref{l:!}. This shows that $\cF\in D_0(\on{Bun}_G)\otimes D(\on{Bun}_H)$, as required.

Next, let $\cF\in D_0(\on{Bun}_G\times \on{Bun}_H)$ be arbitrary and assume for simplicity that $\cF$ is supported on a single connected component. As in Theorem \ref{t:A1}, choose a dominant coweight $\check{\mu}$ of $G\times H$ such that $V^{\check{\mu}}\star \cF$ is supported on $\on{Im}(\alpha_G\times \on{id})$. We have just seen that this implies that $V^{\check{\mu}}\star \cF$ lies in $D_0(\on{Bun}_G)\otimes D_0(\on{Bun}_H)$. Since $\cF$ is a direct summand of $(V^{-w_0(\check{\lambda})}\star V^{\check{\mu}})\star \cF$, so does $\cF$.

\emph{proof of (d):} Finally, let $G$ and $H$ be arbitrary connected reductive groups. We start by showing that for any $\cF\in D_0(\on{Bun}_G\times \on{Bun}_H)$ supported on the connected components in the image of 
$$\alpha_G\times \alpha_H: \on{Bun}_{Z(G)^{\circ}\times [G,G]}\times \on{Bun}_{Z(H)^{\circ}\times [H,H]}\to \on{Bun}_G\times \on{Bun}_H,$$
we have $\cF\in D_0(\on{Bun}_G)\otimes D_0(\on{Bun}_H)$.

Fix such an $\cF$. Let us begin by showing that the image of $\cF$ under inclusion $$D_0(\textrm{Bun}_G\times \textrm{Bun}_H)\hookrightarrow D_{\mathfrak{z}(\mathfrak{g})\times \mathfrak{z}(\mathfrak{h})}(\textrm{Bun}_G\times \textrm{Bun}_H)\simeq  D_{\mathfrak{z}(\mathfrak{g})}(\textrm{Bun}_G)\otimes D_{\mathfrak{z}(\mathfrak{h})}(\textrm{Bun}_H)$$

\noindent lies in in $D_0(\textrm{Bun}_G)\otimes D_{\mathfrak{z}(\mathfrak{h})}(\textrm{Bun}_H)$. It suffices to show that the image of $\cF$ under the composition 
\begin{equation}\label{eq:comp2}
D_0(\textrm{Bun}_G\times \textrm{Bun}_H) \hookrightarrow D_{\mathfrak{z}(\mathfrak{g})}(\textrm{Bun}_G)\otimes D_{\mathfrak{z}(\mathfrak{h})}(\textrm{Bun}_H)\xrightarrow{\textrm{Id}\otimes \lambda} D_{\mathfrak{z}(\mathfrak{g})}(\textrm{Bun}_G)
\end{equation}

\noindent lands in $D_0(\textrm{Bun}_G)$ for all $\lambda\in D_{\mathfrak{z}(\mathfrak{h})}(\textrm{Bun}_H)^{\vee}$. Just as before, we have a diagram:
\[\begin{tikzcd}
	{D_0(\textrm{Bun}_G\times\textrm{Bun}_H)} && {D_0(\textrm{Bun}_{Z(G)^{\circ}\times [G,G]})\otimes D_0(\textrm{Bun}_H)} \\
	\\
	{D_{\mathfrak{z}(\mathfrak{g})}(\textrm{Bun}_G)\otimes D_{\mathfrak{z}(\mathfrak{h})}(\textrm{Bun}_H)} && {D(\textrm{Bun}_{Z(G)^{\circ}\times [G,G]})\otimes D_{\mathfrak{z}(\mathfrak{h})}(\textrm{Bun}_H)} \\
	\\
	{D_{\mathfrak{z}(\mathfrak{g})}(\textrm{Bun}_G)} && {D(\textrm{Bun}_{Z(G)^{\circ}\times [G,G]})}
	\arrow["{(\alpha_G\times\textrm{Id})^!}", from=1-1, to=1-3]
	\arrow[hook, from=1-3, to=3-3]
	\arrow[hook, from=1-1, to=3-1]
	\arrow["{\alpha_G^!\otimes\textrm{Id}}", from=3-1, to=3-3]
	\arrow["{\textrm{Id}\otimes\lambda}"', from=3-1, to=5-1]
	\arrow["{\alpha_G^!}", from=5-1, to=5-3]
	\arrow["{\textrm{Id}\otimes\lambda}", from=3-3, to=5-3]
\end{tikzcd}\]

\noindent Here, we have used that the functor $$D_0(\textrm{Bun}_{Z(G)^{\circ}\times [G,G]})\otimes D_0(\textrm{Bun}_H)\rightarrow D_0(\textrm{Bun}_{Z(G)^{\circ}\times [G,G]}\times \textrm{Bun}_H)$$
is an equivalence to rewrite the upper right corner. Namely, this is a consequence of (c) and the argument in (b) showing that $D_0(\textrm{Bun}_{Z(G)^{\circ}})\otimes D_0(\textrm{Bun}_{[G,G]})\simeq D_0(\textrm{Bun}_{Z(G)^{\circ}\times [G,G]})$.

As in (c), we see that $\alpha_G^!((\on{Id}\otimes \lambda)(\cF))\in D_0(\on{Bun}_{Z(G)^{\circ}\times [G,G]})$, and hence $(\on{Id}\otimes \lambda)(\cF)\in D_0(\on{Bun_G})$. This shows that $\cF\in D_0(\on{Bun}_G)\otimes D_{\fz(\fh)}(\on{Bun}_H)$.

To show that $\cF\in D_0(\on{Bun}_G)\otimes D_0(\on{Bun}_H)$, we need to show that for every $\lambda\in D_0(\on{Bun}_G)^{\vee}$ (note that $D_0(\on{Bun}_G)$ is dualizable by Lemma \ref{l:dual} below), the image of $\cF$ under the map 
$$D_0(\textrm{Bun}_G)\otimes D_{\mathfrak{z}(\mathfrak{h})}(\textrm{Bun}_H)\xrightarrow{\lambda\otimes\textrm{Id}} D_{\mathfrak{z}(\mathfrak{h})}(\textrm{Bun}_H)$$

\noindent lands in $D_0(\on{Bun}_H)$. As above, this follows from the commutative diagram
\[\begin{tikzcd}
	{D_0(\textrm{Bun}_G\times\textrm{Bun}_H)} && {D_0(\textrm{Bun}_G)\otimes D_0(\textrm{Bun}_{Z(H)^{\circ}\times [H,H]})} \\
	\\
	{D_0(\textrm{Bun}_G)\otimes D_{\mathfrak{z}(\mathfrak{h})}(\textrm{Bun}_H)} && {D_0(\textrm{Bun}_G)\otimes D(\textrm{Bun}_{Z(H)^{\circ}\times [H,H]})} \\
	\\
	{D_{\mathfrak{z}(\mathfrak{h})}(\textrm{Bun}_H)} && {D(\textrm{Bun}_{Z(H)^{\circ}\times [H,H]})}
	\arrow["{(\textrm{Id}\times \alpha_H)^!}", from=1-1, to=1-3]
	\arrow[hook, from=1-3, to=3-3]
	\arrow[hook, from=1-1, to=3-1]
	\arrow["{\textrm{Id}\otimes \alpha_H^!}", from=3-1, to=3-3]
	\arrow["{\lambda\otimes\textrm{Id}}"', from=3-1, to=5-1]
	\arrow["{\alpha_H^!}", from=5-1, to=5-3]
	\arrow["{\lambda\otimes\textrm{Id}}", from=3-3, to=5-3]
\end{tikzcd}\]

Now, let $\cF\in D_0(\on{Bun}_G\times \on{Bun}_H)$ be arbitrary (i.e. we no longer require that $\cF$ be supported on $\on{Im}(\alpha_G\times \alpha_H)$). We may suppose $\cF$ is supported on a single connected component. As before, we may choose a dominant coweight $\check{\mu}$ of $G\times H$ such that $V^{\check{\mu}}\star\cF$ is supported on $\on{Im}(\alpha_G\times \on{Id})$. By the above, this implies that $V^{\check{\mu}}\star \cF$ (and hence $\cF$) lies in $D_0(\on{Bun}_G)\otimes D_0(\on{Bun}_H)$.

\qed
\subsubsection{}

We have used the following lemma in the proof of Theorem B:
\begin{lem}\label{l:dual}

The category $D_0(\on{Bun}_G)\hookrightarrow D(\on{Bun}_G)$ is compactly generated. In particular it is dualizable.
\end{lem}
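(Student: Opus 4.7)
Plan: The strategy is to dispatch the semisimple and torus cases separately, then bootstrap to arbitrary reductive $G$ via the central isogeny $Z(G)^\circ \times [G,G] \to G$. When $G$ is semisimple, Theorem \ref{t:A3} identifies $D_0(\Bun_G)$ with $D(\Bun_G)^{\on{max-anti-temp}}$. Since $\on{quasi-temp}$ is continuous (\S 2.3.2), its kernel $D(\Bun_G)^{\on{max-anti-temp}}$ is closed under colimits in the compactly generated category $D(\Bun_G)$ (\cite{drinfeld2013some}). Hence the inclusion $D(\Bun_G)^{\on{max-anti-temp}} \hookrightarrow D(\Bun_G)$ preserves colimits, its left adjoint $\on{max-anti-temp}$ preserves compact objects, and for any $x \in D(\Bun_G)^{\on{max-anti-temp}}$, writing its image in $D(\Bun_G)$ as a colimit of compact generators $\{c_\alpha\}$ and applying $\on{max-anti-temp}$ yields $x = \colim \on{max-anti-temp}(c_\alpha)$, producing a compact generating set.

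For $G = T$ a torus of rank $r$, I would use the decomposition $\Bun_T \simeq \bZ^r \times B\bG_m^r \times A^r$ with $A = \on{Jac}(X)$. Observe that $D_0 = D$ on both $\bZ$ and $B\bG_m$: any D-module pulls back along a smooth cover by a point, where the singular support condition is vacuous. Iterating Theorem \ref{t:ab} to peel off the abelian variety factors yields
\[
D_0(\Bun_T) \simeq D_0(A)^{\otimes r} \otimes D(\bZ^r \times B\bG_m^r),
\]
where the second tensor factor is manifestly compactly generated and $D_0(A)$ is compactly generated via the Laumon-Rothstein Fourier-Mukai equivalence with $\QCoh$ on the universal vectorial extension of the dual abelian variety. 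A tensor product of compactly generated DG categories is again compactly generated.

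For general reductive $G$, consider the central isogeny $\phi: \tilde G := Z(G)^\circ \times [G,G] \to G$ with finite central kernel $\mu$. Since $\Bun_{\tilde G} \simeq \Bun_{Z(G)^\circ} \times \Bun_{[G,G]}$, iterating Theorem \ref{t:ab} on the abelian variety factors of $\Bun_{Z(G)^\circ}$ and combining the semisimple and torus cases gives compact generation of $D_0(\Bun_{\tilde G})$. The induced smooth map $\alpha_G: \Bun_{\tilde G} \to \Bun_G$ is a $\Bun_\mu$-torsor onto a union of connected components of $\Bun_G$; compact generation should descend along the finite gerbe $\Bun_\mu$ by equivariant descent. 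For components outside $\on{Im}(\alpha_G)$, I would translate them into the image by a Hecke functor $V^{\check\lambda}\star -$ (which preserves $D_0$ by \S 2.2.6), using that any $\cF$ is a direct summand of $(V^{-w_0(\check\lambda)} \star V^{\check\lambda}) \star \cF$, as in the proofs of Theorems \ref{t:A1} and \ref{t:A3}. The main obstacle I expect is the equivariant descent step along $\Bun_\mu$: one must verify that equivariance under this finite gerbe is compatible with the singular support condition defining $D_0$, and hence preserves compact generation.
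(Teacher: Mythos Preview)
Your overall structure matches the paper's proof: reduce first to the semisimple case via Theorem A (where the left adjoint $\on{max-anti-temp}$ to the inclusion sends compact generators of $D(\Bun_G)$ to compact generators of $D_0(\Bun_G)$), then handle the torus case by decomposing $\Bun_T$ and invoking Theorem~\ref{t:ab} for the Jacobian factors, and finally bootstrap to general $G$ via the isogeny $Z(G)^\circ\times[G,G]\to G$ together with Hecke translations to reach all connected components.

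The one substantive difference is in the last step. You propose to descend compact generation along the $\Bun_\mu$-torsor $\alpha_G$ and correctly flag this as the delicate point. The paper sidesteps this entirely: it simply checks that the left adjoint $\alpha_{G!}:D(\Bun_{\tilde G})\to D(\Bun_G)$ sends $D_0$ to $D_0$. This is immediate from Lemma~\ref{l:!}, since $\alpha_G^!\alpha_{G!}$ is just tensoring with the (finite-dimensional) vector space $H_*(\Bun_\mu)$, so $\alpha_G^!\alpha_{G!}(\cF)\in D_0$ forces $\alpha_{G!}(\cF)\in D_0$. One then writes down the compact generators explicitly as $\{V^{\check\lambda}\star\alpha_{G!}(\cF)\}$ with $\cF$ ranging over compact generators of $D_0(\Bun_{\tilde G})$ and $\check\lambda$ over dominant coweights; conservativity of $\alpha_G^!$ on $\on{Im}(\alpha_G)$ plus the Hecke retract trick shows these generate. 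Your descent argument would also go through (the action of the finite gerbe $\Bun_\mu$ is through an \'etale map, so it respects $D_0$, and invariants under a finite groupoid preserve compact generation), but the paper's route is shorter and avoids having to set up the equivariant formalism.
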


\begin{proof}
If $G$ is semisimple, this assertion follows from Theorem A. Indeed in this case, the inclusion $D_0(\on{Bun}_G)\into D(\on{Bun}_G)$ admits a left adjoint and the category $D(\on{Bun}_G)$ is compactly generated (see \cite{drinfeld2013compact}).

If $G\simeq T\times H$ is the product of a torus and a semisimple group $H$, then as we saw in the proof of Theorem B, step (b), we have $$D_0(\textrm{Bun}_T\times \textrm{Bun}_H)\simeq D_0(\textrm{Bun}_T)\otimes D_0(\textrm{Bun}_H).$$

\noindent We may write $$D_0(\textrm{Bun}_T)\simeq \big(D(\mathbb{Z})\otimes D(\mathbb{B}\mathbb{G}_m)\big)^{\otimes \textrm{rk}(T)}\otimes D_0(\textrm{Jac})^{\otimes \textrm{rk}(T)}.$$
Here, $D_0(\on{Jac})$ is compactly generated because $\textrm{Jac}$ is an abelian variety (see \cite[\S 23]{arinkin2020stack}). This settles the lemma whenever $G$ is the product of a torus and a semisimple group.

Finally, if $G$ is an arbitrary reductive group, consider the central isogeny $Z(G)^{\circ}\times [G,G]\rightarrow G$ and the induced smooth map of stacks $\alpha_G: \textrm{Bun}_{Z(G)^{\circ}\times [G,G]}\rightarrow \textrm{Bun}_G$. Note that the functor 
$$\alpha_{G!}: D_0(\on{Bun}_{Z(G)^{\circ}\times [G,G]})\to D(\on{Bun}_G)$$
lands in $D_0(\on{Bun}_G)$. Indeed, $\alpha_G$ is a $\on{Bun}_{Z([G,G])}$-torsor over its image (see \cite[\S 3]{hoffmann2010moduli}). It follows that the functor $\alpha_G^!\circ \alpha_{G!}$ just tensors with the vector space $H_*(\on{Bun}_{Z([G,G])})$. Thus $\alpha_{G!}(\cF)\in D_0(\on{Bun}_G)$ for any $\cF\in D_0(\on{Bun}_{Z(G)^{\circ}\times [G,G]})$.

It is now easy to see that the collection $\lbrace V^{\check{\lambda}}\star \alpha_{G!}(\cF)\rbrace$ compactly generates $D_0(\on{Bun}_G)$, where $\lbrace \cF\rbrace$ is a collection of compact generators of $D_0(\on{Bun}_{Z(G)^{\circ}\times [G,G]})$ and $\check{\lambda}\in X_*(T)$ are dominant coweights.

\end{proof}

\begin{rem}

It is not difficult to see that \cite[Thm 16.3.3]{arinkin2020stack} and Theorem A also imply the equivalence (\ref{eq:0}). Indeed, the theorem in \emph{loc.cit} says that for reductive groups $G$ and $H$, the functor $$D_{\textrm{Nilp}}(\textrm{Bun}_G)\otimes D_{\textrm{Nilp}}(\textrm{Bun}_H)\rightarrow D_{\textrm{Nilp}}(\textrm{Bun}_G\times \textrm{Bun}_H)$$
is an equivalence where $\textrm{Nilp}$ denotes the global nilpotent cone (see \emph{loc.cit}).

\end{rem}
\newpage

\printbibliography

\end{document}